\documentclass[11pt,a4paper]{article}

\usepackage[english]{babel}
\usepackage[utf8]{inputenc}
\usepackage[T1]{fontenc}
\usepackage{lmodern}

\usepackage{graphicx}
\graphicspath{{assets/}}
\usepackage{graphbox}
\usepackage{amssymb}
\usepackage{amsmath}
\usepackage{amsthm}
\usepackage{mathtools}
\usepackage{stmaryrd}
\usepackage[colorlinks=true,linkcolor=blue,citecolor=blue,urlcolor=blue]{hyperref}
\usepackage{cleveref}

\usepackage{csquotes,xpatch}
\usepackage[style=numeric,giveninits=true,sorting=nyt,maxbibnames=99,backend=biber]{biblatex}
\DeclareFieldFormat{titlecase}{\MakeSentenceCase*{#1}}

\usepackage[a4paper,margin=1in]{geometry}

\usepackage{enumitem}
\usepackage{microtype}
\usepackage{epigraph}
\input{macros.sty}

\newtheorem{theorem}{Theorem}[section]
\newtheorem{lemma}[theorem]{Lemma}
\newtheorem{corollary}[theorem]{Corollary}

\newtheorem{conjecture}[theorem]{Conjecture}

\theoremstyle{definition}
\newtheorem{definition}[theorem]{Definition}

\theoremstyle{remark}
\newtheorem{remark}[theorem]{Remark}

\title{Local flag algebras}
\author{Eoin Davey \and Eoin Hurley \and R\'emi de Joannis de Verclos \and Ross J. Kang \and Jan Volec}
\date{14 July 2026}

\begin{document}

\maketitle

\begin{abstract}
We introduce \emph{local flag algebras}, a variant of Razborov's flag
algebra framework in which densities are normalised by the maximum
degree $\Delta(G)$ rather than the order $|G|$. The framework supports
the same semidefinite-method machinery as the classical version, but is tailored to extremal problems that scale with the
maximum degree. As an illustrative first application we bound the number of pentagons
in a triangle-free graph $G$ as a function of $|G|$ and
$\Delta(G)$.
\end{abstract}


\epigraph{\footnotesize\ttfamily Beware of bugs in the above code; I have only proved it correct, not tried it.}{---Donald Knuth\footnotemark}
\footnotetext{\url{https://staff.fnwi.uva.nl/p.vanemdeboas/knuthnote.pdf}}

\section{Introduction}
\label{sec:introduction}

We introduce \emph{local flag algebras}, a variant of Razborov's flag
algebra framework~\cite{razborovFlagAlgebras2007} in which densities are
normalised by the maximum degree $\Delta(G)$ rather than the order
$|G|$. Classical flag algebras compute with limits of densities
normalised by $\binom{|G|}{k}$, and so are suited to extremal problems
that scale with the order; local flag algebras instead use \emph{local
densities} normalised by $\binom{\Delta(G)}{k}$, capturing problems that
scale with the maximum degree. The variant retains the whole
semidefinite-method apparatus of the classical theory --- a flag
product, limit functionals, an averaging (unlabelling) operator, a
positivity cone, and weak duality --- and adds a transfer principle that
lifts asymptotic flag-algebra inequalities to unconditional bounds for
all graphs. We develop it from first principles in
Sections~\ref{sec:classical-recap}--\ref{sec:localflags}.

These local densities resemble neighbourhood statistics of a random vertex,
in the spirit of the local-convergence theory of Benjamini and
Schramm~\cite{benjaminiSchrammRecurrence2001}. However, the parallel is only
partial: that theory typically fixes a uniform bound on the maximum degree, whereas
sampling a fixed number $k$ of vertices at a bounded depth from a random vertex 
and normalising by $\Delta(G)^k$ allows $\Delta(G) \to \infty$, which is closer to the theory of dense limits due to
Lov\'asz and Szegedy~\cite{lovaszSzegedyLimits2006}.

The framework is
the main contribution of this paper. To demonstrate the method we
use it to make a first attack on one specific extremal problem --- a bounded-degree refinement of
Erd\H{o}s's pentagon problem --- one which we have formulated specifically for this purpose.
With a suitable reduction, this problem can also be treated within the classical
framework of flag algebras, so we offer it here more as an illustration of local flag algebras
than as a problem that demands it; the framework's distinctive value
appears where the local normalisation is essential, as in the companion
paper's~\cite{daveySECLocalFlags2024} bounds on the strong chromatic
index. In Section~\ref{sec:bruhn-joos}, we give a separate application as a preview of that direction.

Let $P(G)$ denote the number of unordered pentagons (induced copies of
$C_5$) in a graph $G$. In 1983 Erd\H{o}s~\cite{erdosProblemsGraphTheory1984}
conjectured that for every triangle-free graph $G$,
\[
P(G) \leq (|G|/5)^5,
\]
with equality on the balanced blowup of $C_5$;
Grzesik~\cite{grzesikMaximumNumberFivecycles2012} and Hatami, Hladk\'y,
Kr\'a\v{l}, Norine, and Razborov~\cite{hatamiNumberPentagonsTrianglefree2013}
independently proved it, both using Razborov's flag algebras. We propose the
following bounded-degree analogue: how many pentagons can a triangle-free graph
contain in terms of \emph{both} its order $|G|$ and its maximum degree
$\Delta(G)$? Since $\Delta(G) \leq |G|-1$, we have
$|G|\,\Delta(G)^4 \leq |G|^5$, the two agreeing up to a constant factor
exactly when $\Delta(G) = \Theta(|G|)$ --- as on Erd\H{o}s's extremal
blowup of $C_5$, where $\Delta(G) = 2|G|/5$. A bound of the form
$P(G) \leq c\,|G|\,\Delta(G)^4$ is thus a genuine sharpening for sparse
graphs, smaller by a factor $(|G|/\Delta(G))^4$.

Local flag algebras yield two such bounds. The first serves as a warm-up.

\begin{theorem}[simple pentagon bound]
\label{thm:simple}
For every triangle-free graph $G$,
\[
P(G) \leq \frac{|G|\,\Delta(G)^4}{40}.
\]
\end{theorem}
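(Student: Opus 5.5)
The plan is to prove Theorem~\ref{thm:simple} by a double count centred at a single vertex, reducing it to a local inequality of order $\Delta^4/8$. I have not verified the constant $1/8$ in full, and I would fall back on the local flag algebra machinery if the elementary argument stalls.

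Fix a triangle-free graph $G$ with $\Delta=\Delta(G)$. Every pentagon $v\,a\,x\,y\,b$ is determined by choosing an apex $v$, its two pentagon-neighbours $a,b\in N(v)$, and the opposite edge $xy$ with $x\in N(a)$, $y\in N(b)$. Each pentagon has five apices, so
\[
5P(G)=\sum_{v} C(v),
\]
where $C(v)$ is the number of pentagons through $v$. It therefore suffices to show $C(v)\le \Delta^4/8$ for every $v$, since then $P(G)\le |G|\Delta^4/40$.

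\emph{Local structure around $v$.} Write $N=N(v)$ and $S=N_2(v)$ for the set of vertices at distance two. Triangle-freeness gives:
\begin{itemize}
\item $N$ is independent.
\item A vertex $x\in N(a)\setminus\{v\}$ with $a\in N$ lies in $S$; otherwise $v,a,x$ would form a triangle.
\item For an edge $xy$ in $G[S]$, the sets $N(x)\cap N$ and $N(y)\cap N$ are disjoint, since a common neighbour would close a triangle with $x,y$.
\end{itemize}
Put $d_x=|N(x)\cap N|$. Then each pentagon through $v$ corresponds to an edge $xy$ of $G[S]$ together with a choice of $a\in N(x)\cap N$ and $b\in N(y)\cap N$, and the disjointness makes $a\ne b$ automatic. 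Hence
\[
C(v)=\sum_{xy\in E(G[S])} d_x d_y .
\]
The available constraints are:
\begin{itemize}
\item $\sum_{x\in S} d_x = e(N,S)\le |N|(\Delta-1)<\Delta^2$;
\item $d_x+d_y\le |N|\le\Delta$ for every edge $xy$ of $G[S]$;
\item $\deg_{G[S]}(x)\le \Delta-d_x$, because $x$ already spends $d_x$ edges on $N$.
\end{itemize}

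\emph{The key inequality.} The goal is to maximise $\sum_{xy} d_xd_y$ under these constraints and show the maximum is $\Delta^4/8$. The conjectured extremal configuration is $2\Delta$ vertices in $S$, each with $d_x=\Delta/2$ and $S$-degree $\Delta/2$. This is the neighbourhood picture of a $C_5$-blowup and gives exactly $\Delta^4/8$.

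The naive route uses $d_xd_y\le (d_x+d_y)^2/4\le \Delta(d_x+d_y)/4$. Summing over edges yields
\[
\frac{\Delta}{4}\sum_x d_x\deg_S(x)\le\frac{\Delta}{4}\sum_x d_x(\Delta-d_x),
\]
which only gives $\Delta^4/4$: a factor of $2$ is lost. The main obstacle is recovering this factor. First, I would keep the term $\deg_S(x)\le\Delta-d_x$ and the bound $d_y\le\Delta-d_x$ together. Writing $C(v)=\tfrac12\sum_x d_x\sum_{y\sim x}d_y$, the inner sum counts paths $x\,y\,a$ with $a\in N\setminus N(x)$. I would bound it both by $(\Delta-d_x)^2$ and by $(|N|-d_x)\Delta$ minus the edges from $N\setminus N(x)$ to other vertices, and then interpolate between these bounds with a weighted AM--GM tuned so that equality holds at $d_x=\Delta/2$.

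If no clean elementary weighting appears, I would pass to the local flag algebra setting of Section~\ref{sec:localflags}. There $C(v)/\Delta^4$ becomes a rooted local density, and the inequality $C(v)\le\Delta^4/8$ becomes a flag-algebra inequality on small rooted flags of depth two. I would prove it by a single Cauchy--Schwarz (positivity) certificate on a one-vertex-rooted type, squaring a combination of the edge density between $N$ and $S$ and the $S$-degree. The transfer principle then converts the asymptotic inequality into the unconditional bound of Theorem~\ref{thm:simple}.
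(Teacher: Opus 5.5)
Your reduction is sound and matches the paper. The per-vertex double count $5P(G)=\sum_v C(v)$ is the one behind Lemma~\ref{lem:local-count}, and $C(v)=\sum_{xy\in E(G[S])}d_xd_y$ with your three constraints is exactly Lemma~\ref{lem:per-vertex-identity} together with the capacity count~\eqref{eq:capacity}. The paper uses that identity for $\Delta\le5$.

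The gap is that the whole content of the theorem is the inequality $C(v)\le\Delta^4/8$, and you do not prove it. Your AM--GM step gives only $\Delta^4/4$, i.e.\ $P(G)\le|G|\Delta^4/20$. The proposed repair cannot work as described. The value $(\Delta-d_x)^2$ for $\sum_{y\sim x}d_y$ is attained locally: give $x$ exactly $\Delta-d_x$ neighbours in $S$, each joined to all of $N\setminus N(x)$. So any bound on the inner sum in terms of $d_x$ alone is at least that large. Then $\tfrac12\sum_x d_x(\Delta-d_x)^2$ subject to $\sum_x d_x\le\Delta^2$ is only controlled by about $\Delta^4/2$, because the mass can sit on vertices with small $d_x$. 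What you lose is the coupling across each edge. A vertex with small $d_x$ has a large inner sum only because its neighbours carry large $d_y$, and that spends the global budget $\sum_y d_y\le\Delta(\Delta-1)$. Subtracting the edges from $N\setminus N(x)$ gestures at this but gives no mechanism.

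The flag-algebra fallback is not an argument either. No certificate is exhibited. The paper's certificate (Lemma~\ref{lem:size5-bound}) needs five extension-difference constraints, the black-vertex normalisation, and two Cauchy--Schwarz blocks on three-vertex types. If your one-vertex root is a vertex of $S$, as your description suggests, it is not even a local type in the paper's 2-coloured class: a lone red vertex fails Lemma~\ref{lem:pentagon-local}. So Lemma~\ref{lem:positivity} would not license averaging it out.

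The missing idea is to discharge each edge's product onto its endpoints with a degree-tuned weight. This is the device the paper uses at $\Delta\le5$ (Lemma~\ref{lem:sixty} and Appendix~\ref{app:small-degree}), and here it closes your route exactly.
\begin{itemize}
\item Put $w(k)=\Delta^2k/\bigl(8(\Delta-k)\bigr)$ for $0\le k<\Delta$.
\item Substituting $a=p/(\Delta-p)$ and $b=q/(\Delta-q)$ turns $pq\le w(p)+w(q)$ into $8ab\le(a+b)(1+a)(1+b)$. This follows from $a+b\ge2t$, $(1+a)(1+b)\ge(1+t)^2$ and $(1+t)^2\ge4t$, with $t=\sqrt{ab}$.
\item Every endpoint of an edge of $G[S]$ has $d_x\le\Delta-1$, because $1\le\deg_S(x)\le\Delta-d_x$.
\end{itemize}
Hence
\[
C(v)\le\sum_{xy}\bigl(w(d_x)+w(d_y)\bigr)=\sum_x\deg_S(x)\,w(d_x)\le\sum_x(\Delta-d_x)\,w(d_x)=\frac{\Delta^2}{8}\sum_x d_x\le\frac{\Delta^3(\Delta-1)}{8},
\]
where the middle sums run over $x\in S$ with $\deg_S(x)\ge1$.

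This proves the theorem outright, with no asymptotics, regularisation or certificate. It would be a genuinely more elementary route than the paper's. The paper gets $C(v)\lesssim\Delta^4/8$ only asymptotically, via the 2-coloured reduction of Lemma~\ref{lem:brrb-suffices} and the size-$5$ SDP bound $\phi(O)\le 1/4$. It then lifts this to all graphs with Lemmas~\ref{lem:asymp-suffices} and~\ref{lem:regular-suffices}. As submitted, though, your proposal leaves open the one step that carries the whole theorem.
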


\begin{theorem}[tighter pentagon bound]
\label{thm:tight}
For every triangle-free graph $G$,
\[
P(G) \leq 0.02073\cdot |G|\,\Delta(G)^4.
\]
\end{theorem}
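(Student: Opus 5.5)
The plan is to prove Theorem~\ref{thm:tight} inside the local flag algebra framework of Sections~\ref{sec:classical-recap}--\ref{sec:localflags}. We first express the pentagon count as a local density. Each induced pentagon is seen from each of its five vertices $v$ as a rooted configuration: two neighbours $a,b$ of $v$ (adjacent to $v$, non-adjacent to each other by triangle-freeness) and two second-neighbours $c,d$ with $a\sim c$, $c\sim d$, $d\sim b$. Counting rooted pentagons gives
\[
5P(G)=\sum_v \#\{\text{pentagons through }v\}.
\]
The number of pentagons through $v$ is at most a constant times $\Delta(G)^4$, since one picks $a,b,c$ and then $d$ among the at most $\Delta$ neighbours of $c$. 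Normalising by $\Delta(G)^4$ and averaging over a uniformly random root $v$ gives
\[
P(G)=\frac{|G|\,\Delta(G)^4}{5}\cdot p(G),
\]
where $p(G)$ is the local density of the rooted pentagon type (with the appropriate $1/2$ for the symmetry swapping the two sides). So it suffices to show that every triangle-free $G$ satisfies $p(G)\le 5\cdot 0.02073$ after accounting for that normalisation.

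Next, we use the transfer principle stated earlier: an inequality between local densities that holds asymptotically, for every convergent sequence with $\Delta\to\infty$, holds unconditionally. This holds because blowing up $G$ by independent sets of size $t$ multiplies both $P(G)$ and $|G|\Delta(G)^4$ by $t^5$. So the task reduces to an inequality $\phi(\text{pentagon})\le c$ valid for all limit functionals $\phi$ in the triangle-free local theory.

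To bound $\phi$, we apply the semidefinite method via weak duality. We choose the flag size $k$ at the computationally feasible level: local flags on a root together with $4$ or $5$ vertices inside distance two, which contain the pentagon configuration. We then look for
\begin{itemize}
\item positive semidefinite matrices $Q_\sigma$ over types $\sigma$ (rooted vertex with one or two labelled neighbours or second-neighbours);
\item nonnegative multipliers for the basic local constraints. These encode that each vertex has degree at most $\Delta$, so the normalised degree is at most $1$, and in particular the edge densities from each labelled neighbour are at most $1$. Among the features of the local setting, these degree constraints are the key ones.
\end{itemize}
The certificate must make $c-\text{pentagon}=\sum_\sigma \llbracket Q_\sigma\rrbracket_\sigma+\sum(\text{multipliers})\cdot(\text{constraints})+(\text{nonnegative combination of local graphs})$ in the algebra. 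The SDP is solved numerically, and the solution is then rounded to exact rational matrices that remain positive semidefinite, for example by LDL factorisation with a small slack. The rounding absorbs into the gap between the numerical optimum and $0.02073$.

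The main obstacle is the certificate step: identifying which degree constraints are needed (without them the bound is trivially lost) and getting an SDP with small enough flags that still reaches $0.02073$. I would first try rooted flags of depth two on $5$ vertices and only enlarge if the value exceeds the target. Theorem~\ref{thm:simple}'s elementary counting serves as a sanity check on the normalisation.
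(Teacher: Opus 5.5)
Your plan has a genuine gap at its core. The quantity you propose to bound is the rooted pentagon density $P(G,v)/\Delta(G)^4$, and every ingredient of your certificate is an inequality valid at each individual root: the degree bounds, the Cauchy--Schwarz terms over types containing the root, and the nonnegative rooted flags. With these constraints, averaging over a random root buys nothing. Any constant the SDP certifies bounds $P(G,v)/\Delta(G)^4$ asymptotically at \emph{every} root of every triangle-free graph.

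But that per-root supremum is $1/8$. This is Lemma~\ref{lem:per-vertex-tight}: take the $(k/2)$-blowup of $C_6$ and add a vertex $v$ joined to two antipodal parts. All degrees are at most $\Delta=k+1$, and $P(G,v)=k^4/8$. One doubling-plus-matching step as in Lemma~\ref{lem:regular-suffices} even makes the graph exactly regular without changing the pentagons through $v$. You need a per-root value of at most $5\cdot 0.02073\approx 0.104<1/8$, so no choice of degree constraints and no enlargement of the flags can get there. The best this route gives is $P(G)\le |G|\,\Delta(G)^4/40$, i.e.\ Theorem~\ref{thm:simple}. That is exactly what a size-$5$ rooted program of the kind you describe produces; note that this theorem is itself an SDP bound, not elementary counting.

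The missing idea is an objective that couples the pentagon count at a vertex with the counts at its neighbours. In the example above, each $u\in N(v)$ lies on only $k^3/4$ pentagons. The paper first reduces to regular $G$ and sets $Q(G,v)=\Delta(G)P(G,v)+\sum_{u\in N(v)}P(G,u)$. Double counting gives $\sum_v Q(G,v)=10\,\Delta(G)P(G)$, so it suffices to prove $Q(G,v)\lesssim 0.2073\,\Delta(G)^5$ at every root. On the example, $Q(G,v)\approx\Delta(G)^5/8$, so the per-vertex ceiling no longer applies. In effect this is a mass-transport relation between adjacent roots, which your constraint list never encodes.

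The paper then realises $Q$ in the $2$-coloured class: $v$ is deleted, $N(v)$ is coloured black, and black vertices serve as anchors by Lemma~\ref{lem:pentagon-local}. This also sidesteps the fact that, in the class of all triangle-free graphs, the one-vertex type is not local, so your "average over a random root" cannot be carried out down to $\emptyset$-flags. In this class $Q$ becomes the objective $O_Q$, built from the black-red-red-black $4$-path and the two size-$5$ pentagon flags with one or two black vertices. The paper certifies $\phi(O_Q)\le 0.4146$, i.e.\ $Q(G,v)\lesssim 0.2073\,\Delta(G)^5$, with a size-$8$ certificate ($9295$ flags, $278$ Cauchy--Schwarz blocks).
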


Counting pentagons one vertex at a time cannot beat the constant
$1/40$ of Theorem~\ref{thm:simple}: a single vertex can lie on close
to $\Delta(G)^4/8$ pentagons (Lemma~\ref{lem:per-vertex-tight}).
Theorem~\ref{thm:tight} surpasses this ceiling by also counting the
pentagons through the neighbours of each vertex.

In the classical (unbounded-degree) setting the maximizer is the
blowup of $C_5$, so one might expect it to remain extremal in the
bounded-degree normalisation as well. It does not: the $C_5$-blowup
attains only $P(G)/(|G|\,\Delta(G)^4) = 1/80 = 0.0125$, and several
triangle-free graphs exceed it --- for instance the Petersen graph,
whose $12$ pentagons give ratio $12/810 = 2/135 \approx 0.0148$ (it is the
extremal example at $\Delta = 3$; Theorem~\ref{thm:small-degree}).
Even this is not the best construction we are aware of, and we conjecture
that it is the Clebsch graph that attains the maximum.

\begin{conjecture}[bounded-degree pentagon, sharp form]
\label{conj:pentagon}
For every triangle-free graph $G$,
\[
P(G) \leq \frac{12}{625}\cdot |G|\,\Delta(G)^4.
\]
Moreover, if $G$ is connected then the bound is sharp on the Clebsch graph
$\mathrm{Cl} \cong \mathrm{SRG}(16,5,0,2)$~\cite{brouwerDistanceRegularGraphs1989}
and its balanced blowups.
\end{conjecture}

With
$|\mathrm{Cl}| = 16$, $\Delta(\mathrm{Cl}) = 5$, and $P(\mathrm{Cl}) = 192$,
the ratio for the Clebsch graph is $192/(16\cdot 5^4) = 12/625 = 0.0192$.
Theorem~\ref{thm:tight}'s constant $0.02073$ is within
$\approx 1.08\times$ of $12/625$.

\begin{lemma}[Clebsch-blowup tightness]
\label{lem:clebsch}
For every $k \geq 1$, the $k$-blowup $\mathrm{Cl}[k]$ of the
Clebsch graph $\mathrm{Cl}$ is triangle-free with $|\mathrm{Cl}[k]| = 16k$,
$\Delta(\mathrm{Cl}[k]) = 5k$, and $P(\mathrm{Cl}[k]) = 192 k^5$, so
\[
\frac{P(\mathrm{Cl}[k])}{|\mathrm{Cl}[k]|\,\Delta(\mathrm{Cl}[k])^4}
= \frac{12}{625}.
\]
\end{lemma}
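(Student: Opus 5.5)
Write $\mathrm{Cl}[k]$ for the graph obtained by replacing each vertex $v$ of $\mathrm{Cl}$ by an independent set $B_v$ of size $k$, and each edge $uv$ by the complete bipartite graph between $B_u$ and $B_v$. The plan is to verify the four claims in turn, reducing each to a property of $\mathrm{Cl}$ itself via the projection $\pi\colon V(\mathrm{Cl}[k])\to V(\mathrm{Cl})$.

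\emph{Order, degree and triangle-freeness.} The equality $|\mathrm{Cl}[k]| = 16k$ is immediate. The neighbourhood of $x\in B_v$ is $\bigcup_{u\in N(v)} B_u$, so every vertex has degree $5k$ and $\Delta(\mathrm{Cl}[k])=5k$. Any triangle $xyz$ has pairwise adjacent, hence pairwise distinct, images. Thus $\pi$ maps it to a triangle of $\mathrm{Cl}$, which is impossible because $\mathrm{Cl}\cong\mathrm{SRG}(16,5,0,2)$ has $\lambda=0$.

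\emph{Pentagon count.} Let $S$ be an induced $C_5$ in $\mathrm{Cl}[k]$. Two distinct vertices of $S$ in the same class $B_v$ are non-adjacent and have the same neighbourhood. However, in an induced $C_5$ no two vertices have the same neighbourhood within $S$: two non-adjacent vertices of $C_5$ share exactly one neighbour, while each has two. Hence the five vertices of $S$ lie in distinct classes. Adjacency in $\mathrm{Cl}[k]$ between vertices of distinct classes is exactly adjacency of their images, so $\pi(S)$ is an induced $C_5$ in $\mathrm{Cl}$. Conversely, each induced pentagon of $\mathrm{Cl}$ lifts to exactly $k^5$ induced pentagons, one for each choice of a vertex in each of the five classes. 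Therefore $P(\mathrm{Cl}[k]) = k^5 P(\mathrm{Cl})$.

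\emph{Computing $P(\mathrm{Cl})=192$.} This is the step that needs actual input about $\mathrm{Cl}$. Since $\mathrm{Cl}$ is triangle-free, every $5$-cycle is induced: a chord of a $5$-cycle creates a triangle. So it suffices to count $5$-cycles. A $5$-cycle corresponds to a path $a$--$b$--$c$--$d$ together with a vertex $e$ adjacent to both $a$ and $d$, with $e\notin\{b,c\}$. First count paths $a,b,c,d$ with $a\ne c$, $b\ne d$: there are $16\cdot5\cdot4\cdot4$ of them. Then determine whether $a,d$ are adjacent or at distance $2$. They cannot be equal, because then $abc$ would be a triangle. If $a$ and $d$ are adjacent, $a,b,c,d$ form a $4$-cycle and contribute nothing. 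Counting $4$-cycles through $\mu=2$: each non-adjacent pair has $2$ common neighbours, so the number of $4$-cycles is $\tfrac12\cdot\binom{16}{2}$ minus the adjacent pairs, that is $\tfrac12(120-40)=40$. Each such $4$-cycle yields $8$ directed paths $a,b,c,d$. Hence $320-320=\dots$; I will just do the arithmetic carefully. There are $1280$ paths in total, of which $320$ close into $4$-cycles. The remaining $960$ paths have $a,d$ at distance $2$. For each of these, $a$ and $d$ have exactly $2$ common neighbours. Here $b$ is not a common neighbour, since $b\sim d$ would give a triangle $bcd$, and likewise $c$ is not; so both common neighbours are valid choices of $e$. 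This gives $960\cdot 2=1920$ directed, rooted $5$-cycles. Dividing by $10$ for the choice of start and direction gives $192$.

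\emph{Ratio.} Finally,
\[
\frac{192k^5}{16k\cdot 625k^4}=\frac{12}{625}.
\]
The main obstacle is the careful count of $P(\mathrm{Cl})$; as a cross-check I would compare with vertex-transitivity, which gives $192\cdot5/16=60$ pentagons per vertex.
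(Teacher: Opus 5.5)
Your proof is correct. Its overall structure matches the paper's: order, degree and triangle-freeness by projection, then $P(\mathrm{Cl}[k]) = k^5 P(\mathrm{Cl})$, then $P(\mathrm{Cl}) = 192$. Both substantive steps, however, are argued differently.

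For the blowup count, the paper shows that $\pi$ is injective on a pentagon in two cases. Merging two vertices at cycle-distance $1$ contradicts independence of the classes. Merging at cycle-distance $2$ turns the remaining arc into a closed $3$-walk, which needs triangle-freeness of the base graph. Your twin argument is different: vertices of one class have identical neighbourhoods, while non-adjacent vertices of an induced $C_5$ have different neighbourhoods inside it. It never uses triangle-freeness, so it gives $P(H[k]) = k^5 P(H)$ for every graph $H$, not only triangle-free ones.

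For $P(\mathrm{Cl})$, the paper quotes the spectrum $\{5,1^{10},(-3)^5\}$ and computes $\trace(A^5)/10 = 1920/10$. That is a one-line calculation, reused verbatim for the Petersen graph and $C_{12}(2,3)$. You count $1280$ non-backtracking $3$-walks, remove the $8\cdot 40 = 320$ that close into $4$-cycles, and close each of the remaining $960$ in $\mu = 2$ ways. This reaches the same $1920$ using only the parameters $(16,5,0,2)$. It is more self-contained, since the spectrum is itself derived from those parameters, but the case analysis is slightly longer.

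Before writing it up, delete the stray sentence ``$320-320=\dots$''. Also reword the $4$-cycle count: it is $\tfrac12$ times the $80$ non-adjacent pairs, each contributing $\binom{\mu}{2} = 1$ cycle. The arithmetic itself is right.
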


At maximum degree five Conjecture~\ref{conj:pentagon}, together with its extremal
characterisation, is a theorem.

\begin{theorem}[Clebsch characterisation at $\Delta = 5$]
\label{thm:delta5}
Every triangle-free graph $G$ with $\Delta(G) \leq 5$ satisfies
\[
P(G) \leq 12\,|G|,
\]
with equality if and only if every component of $G$ is isomorphic to
$\mathrm{Cl}$. In particular, Conjecture~\ref{conj:pentagon} holds for
every triangle-free graph of maximum degree exactly five, and its
maximisers at maximum degree exactly five are the disjoint unions of
copies of $\mathrm{Cl}$.
\end{theorem}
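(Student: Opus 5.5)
The plan is to double count pentagons through vertices and bound the number of pentagons through a single vertex by $60$. Write $p(v)$ for the number of pentagons containing $v$, so that $5P(G)=\sum_v p(v)$, and it suffices to show $p(v)\le 60$ for every $v$. Fix $v$, let $N=N(v)$ and let $S$ be the set of vertices at distance exactly two from $v$. Since $G$ is triangle-free, every $5$-cycle is induced. A pentagon through $v$ therefore has the form $v\,a\,x\,y\,b$ with $a,b\in N$ and $xy$ an edge inside $S$ (neither $x$ nor $y$ is adjacent to $v$), with $a\sim x$ and $b\sim y$. For $x\in S$ let $d(x)=|N(x)\cap N|$ and $s(x)=|N(x)\cap S|$.

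For an edge $xy$ in $S$, the sets $N(x)\cap N$ and $N(y)\cap N$ are disjoint, since a common neighbour would form a triangle. Hence $a\neq b$ automatically and
\[
p(v)=\sum_{xy\in E(G[S])} d(x)\,d(y),\qquad d(x)+d(y)\le |N|\le 5 .
\]
Also $d(x)+s(x)\le 5$, and $\sum_{x\in S} d(x)=e(N,S)\le 5\cdot 4=20$, because each neighbour of $v$ has at most four further neighbours, all of which lie in $S$ by triangle-freeness.

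The key step is a weighting inequality. I would assign to each $x\in S$ with $d(x)\le 4$ the weight $g(x)=3d(x)/(5-d(x))$; vertices with $d(x)=5$ have $s(x)=0$, so they lie on no edge of $G[S]$ and can be ignored. I would then verify, by checking the finitely many pairs $d_1+d_2\le 5$ with $d_1,d_2\ge1$, that
\[
d_1d_2\le \frac{3d_1}{5-d_1}+\frac{3d_2}{5-d_2},
\]
with equality only at $(d_1,d_2)=(2,2)$. For example, the pair $(2,3)$ gives $6\le 6.5$ and $(1,1)$ gives $1\le 1.5$. Summing over the edges of $G[S]$ gives
\[
p(v)\le\sum_{x\in S} s(x)\,g(x)\le \sum_{x\in S}(5-d(x))\cdot\frac{3d(x)}{5-d(x)}=3\sum_{x\in S}d(x)\le 60 .
\]
Finding a weighting that is tight exactly on the Clebsch local structure is the main obstacle. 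The Clebsch graph has $d\equiv 2$ and $s\equiv 3$ on its ten second-neighbourhood vertices, which dictates the choice of $g$.

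For the equality case, suppose $P(G)=12|G|$. Then $p(v)=60$ for every $v$, so all the inequalities above are tight at every vertex. This means $|N(v)|=5$ and $e(N,S)=20$, so $G$ is $5$-regular. It also means every $x\in S$ lying on an edge of $G[S]$ has $d(x)=2$ and $s(x)=3$. Any $x\in S$ with $s(x)=0$ contributes $s(x)g(x)=0$ but still contributes $d(x)$ to $e(N,S)$, so tightness of $\sum_x s(x)g(x)\le 3\sum_x d(x)$ forbids such vertices. Hence every $x\in S$ has $d(x)=2$ and $s(x)=3$, so $|S|=10$ and every vertex of $S$ has all five of its neighbours inside $\{v\}\cup N\cup S$. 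The component of $v$ is therefore this set of $16$ vertices. It is $5$-regular and triangle-free, and any two non-adjacent vertices have exactly two common neighbours, because $d\equiv 2$ holds from the viewpoint of every vertex. So the component is an $\mathrm{SRG}(16,5,0,2)$, which is unique and isomorphic to $\mathrm{Cl}$. Conversely, Lemma~\ref{lem:clebsch} with $k=1$ gives $P(\mathrm{Cl})=192=12\cdot16$. The statement about maximum degree exactly five then follows, since $12|G|=\tfrac{12}{625}|G|\cdot 5^4$.
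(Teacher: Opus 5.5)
Your proof is correct. The inequality half follows the paper closely. You use the same per-vertex identity $P(G,v)=\sum_{xy}k_xk_y$; your $S$ differs from the paper's $F_v$ only by vertices with $d(x)=0$, which contribute nothing. You also use the same capacity count $\sum_x d(x)\le 20$ and the same pair-then-vertex weighting.

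The choice of weight differs. Your $g(k)=3k/(5-k)$ is the pointwise largest weight compatible with the vertex inequality, so $(5-k)g(k)=3k$ is an identity. The paper uses $w=(0,\tfrac12,2,4,\tfrac72,0)$, for which the vertex inequality is tight only at $k\in\{0,2\}$. With your $g$, tightness immediately gives $s(x)=5-d(x)$ on all of $S$. Equality in the pair inequality, which for $g$ occurs only at $(2,2)$, then forces $d\equiv 2$ and $s\equiv 3$.

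The genuine divergence is the rigidity step. You run the local analysis at every vertex $u$, conclude that the component is $\{u\}\cup N(u)\cup S(u)$, and deduce that any two non-adjacent vertices have exactly two common neighbours. This makes the component an $\mathrm{SRG}(16,5,0,2)$, and you then invoke the classical uniqueness of that strongly regular graph.

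The paper never uses uniqueness; it mentions it only as an alternative description of $\mathrm{Cl}$. Lemma~\ref{lem:rigidity} proves directly that $x\mapsto A_x$ is a bijection from $X$ onto the $2$-subsets of $N(v)$. It then shows that two members of $X$ are adjacent exactly when their attachment sets are disjoint, and writes down an explicit isomorphism with the folded $5$-cube.

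So your route is shorter, but it outsources the final identification to a cited, standard but nontrivial theorem. The paper's route is self-contained and in effect reproves uniqueness of $\mathrm{SRG}(16,5,0,2)$. That self-containment is what lets the paper formalise this theorem as an elementary, axiom-free argument.
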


The proof is elementary and independent of the semidefinite method;
the exact constant at $\Delta = 5$ lies past the plateau of
the size-$8$ certificate (Section~\ref{sec:vandermonde-floor}). For the
maximum degrees $\Delta \geq 6$ that remain open, a computer
search finds no ratio $P(G)/(|G|\Delta(G)^4)$ above $12/625$;
Section~\ref{sec:clebsch} and the closing note
\textit{\nameref{sec:lean-and-empirical}} give the details.
The same elementary per-vertex count yields analogous bounds at maximum
degrees three and four, sharp at three.

\begin{theorem}[small maximum degree]
\label{thm:small-degree}
Let $G$ be triangle-free.
\begin{enumerate}[label=(\roman*)]
\item If $\Delta(G) \leq 3$, then $P(G) \leq \tfrac{6}{5}\,|G|$, with
equality if and only if every component of $G$ is a Petersen graph.
\item If $\Delta(G) \leq 4$, then $P(G) \leq \tfrac{24}{5}\,|G|$.
\end{enumerate}
Together with Theorem~\ref{thm:delta5} and the trivial case
$\Delta(G) \leq 2$ (where $P(G) \leq |G|/5$), Conjecture~\ref{conj:pentagon}
holds for every triangle-free graph with $\Delta(G) \leq 5$; if $G$ has
no isolated vertices, equality holds if and only if every component of
$G$ is isomorphic to $\mathrm{Cl}$.
\end{theorem}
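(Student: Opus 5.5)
The plan is to count pentagons one vertex at a time: bound the number $P_v$ of pentagons through each vertex $v$ by a function of $\Delta$ only, then use $5P(G)=\sum_v P_v$. Fix $v$ with $\Delta=\Delta(G)$. A pentagon through $v$ has the form $v\,a\,x\,y\,b$ with $a,b\in N(v)$ distinct, $x\in N(a)\setminus\{v\}$, $y\in N(b)\setminus\{v\}$, and $xy\in E(G)$. Triangle-freeness rules out the chords: $ab\notin E$, $vx,vy\notin E$, and $x\neq y$. So $P_v$ is at most the number of pairs $\{(a,x),(b,y)\}$ of distinct ``length-two rays'' from $v$ whose endpoints are adjacent.

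\emph{Per-vertex bound.} There are at most $\Delta(\Delta-1)$ rays $(a,x)$. From each ray, the endpoint $x$ has at most $\Delta-1$ neighbours other than $a$. Each pentagon through $v$ is then seen exactly twice, once from each of its two rays. Hence
\[
P_v\le \tfrac12\,\Delta(\Delta-1)^2 .
\]
For $\Delta\le 3$ this gives $P_v\le 6$, so $P(G)\le \tfrac{6}{5}|G|$. For $\Delta\le 4$ it gives $P_v\le 18$, so $P(G)\le\tfrac{18}{5}|G|\le\tfrac{24}{5}|G|$. For $\Delta\le2$ each component is a path or cycle, so $P(G)\le|G|/5$ directly.

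\emph{Equality for $\Delta\le3$.} If $P(G)=\tfrac65|G|$ then $P_v=6$ for every $v$, so every counted edge must actually close a pentagon.
\begin{itemize}[label={}]
\item Every vertex has degree exactly $3$. Otherwise the number of rays, or the number of continuations from some ray, drops.
\item No second neighbour $x$ is adjacent to two neighbours $a,b$ of $v$, i.e.\ $G$ has no $4$-cycle through $v$. Otherwise, from the ray $(a,x)$ the continuation $x\to b$ lands in $N(v)$ rather than on a second-layer endpoint, which costs a pentagon.
\item The six second-layer vertices are distinct, and each of their two remaining edges goes to a second-layer vertex in a different branch.
\end{itemize}
Consequently the ball of radius $2$ around $v$ is closed under adjacency. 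The component of $v$ therefore has exactly $1+3+6=10$ vertices, is cubic, and has girth $5$. By the uniqueness of the $(3,5)$-Moore graph, it is the Petersen graph. Conversely, the Petersen graph has $12=\tfrac65\cdot10$ pentagons. This step, extracting rigid structure from tightness and verifying that the equality conditions really close up the ball, is the one I expect to need the most care.

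\emph{Final assertion.} Compare each bound with $\tfrac{12}{625}|G|\Delta(G)^4$:
\begin{itemize}[label={}]
\item For $\Delta=2$: $\tfrac15<\tfrac{12\cdot16}{625}$.
\item For $\Delta=3$: $\tfrac65<\tfrac{12\cdot81}{625}$.
\item For $\Delta=4$: $\tfrac{24}{5}<\tfrac{12\cdot256}{625}$.
\item For $\Delta=5$: use Theorem~\ref{thm:delta5}.
\end{itemize}
All inequalities for $\Delta\le4$ are strict. Now apply the bounds componentwise, with $\Delta(G)$ fixed. Every component with no isolated vertex and maximum degree below $5$ contributes strictly less than $\tfrac{12}{625}|C|\Delta(G)^4$, and for $\Delta(G)\le4$ the conjectured bound is strict. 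So if $G$ has no isolated vertices, equality forces $\Delta(G)=5$ and every component to attain $12|C|$. By Theorem~\ref{thm:delta5}, every component is then isomorphic to $\mathrm{Cl}$.
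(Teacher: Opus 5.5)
\textbf{There is a genuine gap in your per-vertex bound.} The faulty step is ``from each ray, the endpoint $x$ has at most $\Delta-1$ neighbours other than $a$.'' That bounds the number of continuation vertices $y$. What you actually need to bound is the number of rays $(b,y)$ paired with $(a,x)$, and a single $y$ is the endpoint of $k_y=|N(y)\cap N(v)|$ rays. So the pentagon $v\,a\,x\,y\,b$ is not determined by $(a,x,y)$ as soon as $y$ has two neighbours in $N(v)$, i.e.\ as soon as there are $4$-cycles through $v$.

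Done correctly, your pairing count is exactly the identity $P(G,v)=\sum_{xy\in E(F_v)}k_xk_y$ of Lemma~\ref{lem:per-vertex-identity}, where $F_v$ is the graph induced on the non-neighbours of $v$. The number of rays paired with $(a,x)$ is $\sum_{y\in N_{F_v}(x)}k_y$, not at most $\Delta-1$.

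The resulting bound $P_v\le\tfrac{1}{2}\Delta(\Delta-1)^2$ is in fact false. At $\Delta=4$ it gives $18$, but $C_{12}(2,3)$ is vertex-transitive with $48$ pentagons (Lemma~\ref{lem:c12-count}). So every vertex lies on $20$ pentagons, and $48>\tfrac{18}{5}\cdot 12$. Concretely, at vertex $0$ the ray $(2,4)$ pairs with six rays, through $y=1,6,7$, each with $k_y=2$. At $\Delta=3$ the value $6$ happens to be correct, but your argument does not yield it. A ray ending at $x$ with $k_x=1$, whose two $F_v$-neighbours have $k=2$, pairs with four rays, so the per-ray estimate only gives $12$.

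\textbf{What is missing, and what survives.} You need a way to trade the large products $k_xk_y$ against three constraints:
\begin{itemize}
\item the reduced degree $\deg_{F_v}(x)\le\Delta-k_x$;
\item the capacity $\sum_x k_x\le|N(v)|(\Delta-1)$;
\item $k_x+k_y\le|N(v)|$ across each edge of $F_v$.
\end{itemize}
The paper does this with a degree-tuned weight function. For $\Delta\le3$, $w=(0,\tfrac{1}{2},2,0)$ satisfies $pq\le w(p)+w(q)$ whenever $p+q\le3$, and $(3-k)w(k)\le k$. Hence
\[
P(G,v)\le\sum_x\deg_{F_v}(x)\,w(k_x)\le\sum_x(3-k_x)\,w(k_x)\le\sum_xk_x\le 6
\]
(Lemma~\ref{lem:six}). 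For $\Delta\le4$, $w=(0,\tfrac{1}{2},2,\tfrac{5}{2},0)$ with $(4-k)w(k)\le 2k$ gives $24$. This is sharp per vertex (Remark~\ref{rem:delta4-gap}), so no argument of your simpler type could give $18$.

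Your equality analysis inherits the same flaw. The claim that a $4$-cycle through $v$ ``costs a pentagon'' is measured against a count that is not an upper bound; under the true count, a continuation $y$ with $k_y=2$ yields two pentagons. In the paper, $k_x\in\{0,1\}$ and the two-neighbour structure of $X=\{x:k_x=1\}$ come from the equality cases of the weight inequalities. For instance, a vertex with $k_x=2$ would have a single $F_v$-neighbour $y$ with $k_y\le1$, violating $k_xk_y=w(k_x)+w(k_y)$.

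Once you have that, the rest of your argument is fine. Your closing step (closed radius-$2$ ball on ten vertices, cubic, girth five from the hypothesis at every vertex, Moore-graph uniqueness) is essentially Lemma~\ref{lem:petersen-rigidity}. Your treatment of the final assertion is also fine.
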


Unlike for part~(i), the bound in part~(ii)
is not known to be sharp. The densest triangle-free graph of maximum
degree four we know, the circulant $C_{12}(2,3)$ on $\mathbb{Z}_{12}$
with $i \sim i \pm 2, i \pm 3$, has $48$ pentagons and ratio
$1/64 = 0.015625$, whereas the per-vertex count caps the ratio at
$3/160 = 0.01875$; the true maximum at $\Delta = 4$ lies between these
and is open.

We outline the proofs. That of Theorem~\ref{thm:simple} uses a
doubling argument and a regularity reduction to reach an asymptotic bound on a 4-path
count in a 2-coloured auxiliary class. We express this count as a
limit functional $\phi(O)$ on a size-$5$ objective $O$ in the local
flag algebra, and bound it by $\phi(O) \leq 1/4$ through an explicit
semidefinite-programming (SDP) certificate written out as a convex combination of
extension-difference constraints, a black-vertex normalisation, and
two Cauchy--Schwarz blocks. The proof of Theorem~\ref{thm:tight}
follows the same architecture for a richer per-vertex functional $Q$,
built from the number $P(G,v)$ of pentagons through a vertex $v$ together
with the corresponding counts at its neighbours (defined precisely in
Section~\ref{sec:tight}). The size-8 SDP
delivers $\phi(O_Q) \leq 0.4146$ through an explicit rational
certificate, verified block-by-block by $LDL^\top$ decompositions together
with an arithmetic identity for the convex combination.
Lemma~\ref{lem:clebsch} is a trace computation on the spectrum of
$\mathrm{Cl}$ with the standard $k$-blowup expansion, and
Theorems~\ref{thm:delta5} and~\ref{thm:small-degree} bypass the
semidefinite method entirely: pentagons through a vertex are counted by
a degree-tuned weight function on the non-neighbours, with an equality
analysis pinning down the extremal graphs (Section~\ref{sec:clebsch}
and Appendix~\ref{app:small-degree}).

Beyond the pentagon illustration developed here, the framework is meant for
reuse: any extremal problem that scales with the maximum degree is a
candidate. At the end of this paper, we describe an additional illustrative application. In the companion paper~\cite{daveySECLocalFlags2024} we pursue this application 
--- upper bounds on the strong chromatic index of
graphs and bipartite graphs --- much further. We expect other applications to follow.

\paragraph{Note on AI and Lean.}

A desired standard in the flag algebra community is, for robustness and increased confidence, to generate the needed SDP in two different software implementations.
Here our approach is new: we formally verify in Lean~4 the proof that our flag-algebra implementation produces, rather than relying on a separate codebase for corroboration.
We also wish to disclose that we obtained Theorems~\ref{thm:delta5} and~\ref{thm:small-degree}, intended as auxiliary supporting results toward Conjecture~\ref{conj:pentagon}, by deploying a commercially available agentic AI system to construct their proofs directly in Lean~4 under our guidance.
See the closing notes \textit{\nameref{sec:lean-and-empirical}} and \textit{\nameref{sec:AI-usage}} for additional details.

\paragraph{Organisation.}

Sections~\ref{sec:classical-recap} and~\ref{sec:localflags} develop the
framework: the former recalls just enough
of classical flag algebras to fix notation, and the latter builds the
local flag algebra framework with all framework lemmas proved.
Sections~\ref{sec:simple}--\ref{sec:clebsch} carry out the illustrative
pentagon application.
Section~\ref{sec:simple} proves Theorem~\ref{thm:simple} via the
size-$5$ certificate. Section~\ref{sec:tight} proves
Theorem~\ref{thm:tight} via the size-$8$ certificate.
Section~\ref{sec:certificates} describes the size-$5$ and size-$8$
certificates as mathematical objects.
Section~\ref{sec:clebsch} proves Lemma~\ref{lem:clebsch} and
Theorem~\ref{thm:delta5}, and surveys the empirical evidence
supporting Conjecture~\ref{conj:pentagon}.
Section~\ref{sec:bruhn-joos} closes with a second, briefer illustration of the local flag algebra method,
recovering the Bruhn--Joos sparsity bound.
Appendix~\ref{app:small-degree} proves Theorem~\ref{thm:small-degree}.

\section{Classical flag algebras: brief recap}
\label{sec:classical-recap}

We collect just enough of Razborov's classical flag algebra
machinery~\cite{razborovFlagAlgebras2007} to motivate the local
variant. The reader already familiar enough with this should skip to
Section~\ref{sec:localflags}.

A \emph{type} $\sigma$ is a labelled graph on the vertex set
$[|\sigma|]$. Given a graph class $\Gcl$, a $\sigma$-flag $(F,\theta)$
consists of a graph $F \in \Gcl$ together with an injection
$\theta\colon[|\sigma|] \to V(F)$ realising $\sigma$ as the induced
subgraph $F[\im\theta]$ on the image $\im\theta$ of $\theta$. We write
$\Gcl^\sigma_n$ for the set of
isomorphism classes of $\sigma$-flags of size $n$ and $\Gcl^\sigma$ for
the union over $n$.

For $\sigma$-flags $F, H$, the \emph{induced count} $c(F;H)$ is the
number of subsets $\im\theta_H \subseteq U \subseteq V(H)$ --- where
$\theta_H$ is the type embedding of $H$ --- with $H[U]
\cong F$ as $\sigma$-flags. The \emph{induced density} is
$p(F;H) = c(F;H)/\binom{|H|-|\sigma|}{|F|-|\sigma|}$. Razborov's chain
rule states that for $n \geq \max(|F|,|F'|)$ and $H \in \Gcl^\sigma_n$,
\begin{equation}
\label{eq:chain-rule}
p(F;H)\,p(F';H) = p(F \cdot F'; H) + O(1/|H|),
\end{equation}
where $F \cdot F' := \sum_{H \in \Gcl^\sigma_{|F|+|F'|-|\sigma|}} p(F,F';H)\,H$,
and $p(F,F';H)$ is
the probability that a uniformly random partition of the unlabelled
vertices of $H$ into parts of sizes $|F|-|\sigma|$ and $|F'|-|\sigma|$
induces $\sigma$-flags isomorphic to $F$ and $F'$ respectively. This
product makes $\R\Gcl^\sigma$, modulo the chain-rule relations, into a
commutative associative unital algebra $\Acl^\sigma$.

A \emph{limit functional} $\phi \in \Phi^\sigma$ is an algebra
homomorphism $\Acl^\sigma \to \R$ arising as
$\phi(F) = \lim_k p(F;G_k)$ for a convergent sequence
$(G_k) \subseteq \Gcl^\sigma$. The \emph{semantic cone}
$\SemCone^\sigma \subseteq \Acl^\sigma$ consists of those $f$ with
$\phi(f) \geq 0$ for all $\phi$. The \emph{averaging operator}
$\llbracket\,\cdot\,\rrbracket\colon \Acl^\sigma \to \Acl^\emptyset$ is
$\llbracket F \rrbracket = q_\sigma(F)\,\downflag{F}$, where
$\downflag{F}$ forgets the labelling and
$q_\sigma(F) = |\mathrm{Stab}(\theta)|/|F|!$, where $\mathrm{Stab}(\theta)$
counts the orderings of $V(F)$ that induce the flag $(F, \theta)$. It
preserves positivity:
$\llbracket f^2 \rrbracket \in \SemCone^\emptyset$ for every
$f \in \Acl^\sigma$. The semidefinite method produces upper bounds
$\phi(f) \leq \lambda$ by exhibiting a decomposition
\[
\lambda\emptyset - f = \sum_i \alpha_i g_i + \sum_j \llbracket h_j^2 \rrbracket
\]
with $\alpha_i \geq 0$ and $g_i \in \SemCone^\emptyset$ known.

The classical framework does not bear directly on our target. The
quantity $P(G,v)/\Delta(G)^4$ is not of the form $p(F;G)$ for any
single classical flag $F$: the natural denominator is $\Delta(G)^4$,
not $|G|^4$, and densities in $\Delta(G)$ are not subdensities in
$|G|$. The next section introduces the local variant.

\section{The local flag algebra framework}
\label{sec:localflags}

Throughout this section we fix a graph class $\Gcl$ (not necessarily
hereditary) and a graph parameter $\Delta\colon \Gcl \to \N_0$. In all
our applications $\Delta$ is the maximum-degree function. We write
$\HeredG$ for the hereditary closure of $\Gcl$.

\subsection{Local densities and local flags}

\begin{definition}[local density]
\label{def:rho}
For $(F,\theta) \in \HeredG^\sigma$ and $(G,\eta) \in \Gcl^\sigma$,
\[
\rho\bigl((F,\theta);(G,\eta)\bigr)
:= \frac{c\bigl((F,\theta);(G,\eta)\bigr)}{\binom{\Delta(G)}{|F|-|\sigma|}}.
\]
We write $\rho(F;G)$ when the embeddings are clear from context.
\end{definition}

In contrast to $p(F;G)$, the quantity $\rho(F;G)$ is not a probability
and may be unbounded. For example, $\rho(\vertex; G) = |G|/\Delta(G)$
is unbounded over the class of all graphs. On the other hand
$\rho(\edgemarked; G) = \deg(\eta(1))/\Delta(G) \leq 1$.

\begin{definition}[local $\sigma$-flag]
\label{def:local-flag}
A $\sigma$-flag $(F,\theta) \in \HeredG^\sigma$ is a \emph{local
$\sigma$-flag} if
\begin{enumerate}[label=(\roman*)]
\item the map $G \mapsto \rho(F;G)$ is bounded on $\Gcl^\sigma$, and
\item every label extension $F^v$ obtained by labelling an unlabelled
vertex $v \in V(F) \setminus \im\theta$ is also a local
$\sigma'$-flag, where $\sigma'$ extends $\sigma$ at $v$.
\end{enumerate}
We write $\Gloc^\sigma$ for the set of local $\sigma$-flags and
$\Glocsub{n}^\sigma$ for those of size $n$.
\end{definition}

Condition (ii) is essential: bounded density alone does not propagate
under the algebra operations we shall introduce.

\begin{lemma}
\label{lem:second-cond-needed}
There exist a class $\Gcl$ and a $\sigma$-flag $F$ such that
$\rho(F;\cdot)$ is bounded but $F$ is not a local $\sigma$-flag.
\end{lemma}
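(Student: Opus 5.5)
We construct an explicit counterexample. The idea is to make $F$ contain an unlabelled vertex that, once labelled, has unboundedly many extensions relative to $\Delta$. Its contribution to $\rho(F;\cdot)$ itself is kept bounded because the labelled vertex sits in a position that is rarely instantiated.

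Take $\Gcl$ to be all graphs, $\Delta$ the maximum degree, and $\sigma=\emptyset$. Let $F$ be the edge $K_2$ with no labels. Then $\rho(K_2;G)=e(G)/\Delta(G)^{\underline{}}$ after normalisation by $\binom{\Delta(G)}{2}$, which is unbounded (take $G$ a perfect matching on many vertices: $\Delta=1$, giving a $0$ denominator). So unlabelled $F$ is the wrong choice. Instead we place a labelled vertex so that the density is controlled by the degree.

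Let $\sigma$ be the one-vertex type, and let $F$ be the $\sigma$-flag on the path $x$–$y$ together with an isolated vertex $z$, with $x$ labelled. Then $c(F;(G,\eta))\le \deg(\eta(1))\cdot |G|$, which is still too large. We therefore restrict the class instead. Take $\Gcl$ to be the class of disjoint unions $G_n = K_{1,n}\,\dot\cup\, M_n$, where $M_n$ is a perfect matching on $2n^2$ vertices, so $\Delta(G_n)=n$. Let $F$ be the $\sigma$-flag on the vertex set $\{x,y,z\}$ with edge set $\{xy\}$, $z$ isolated, and $z$ the labelled vertex.

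We check first that $\rho(F;\cdot)$ is bounded. Given a root $\eta(1)=z$, $c(F;(G_n,\eta))$ counts the edges of $G_n$ avoiding $z$ and its neighbourhood. This is at most $e(G_n)=n+n^2$. Since $\binom{n}{2}\sim n^2/2$, the ratio is bounded by roughly $2$.

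Next we check that the label extension fails condition (i). Labelling $x$ gives a type $\sigma'$ on two nonadjacent vertices $z,x$, and the extension $F^x$ has one unlabelled vertex $y$ adjacent to $x$. Its local density is $\deg(x)/\Delta$ (up to $O(1)$ corrections from adjacency to $z$), which is at most $1$, so this extension is bounded. Labelling $y$ behaves the same way. Hence this $F$ also fails to be a counterexample, and the construction must be adjusted.

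The real mechanism must be a flag in which some unlabelled vertex $v$ has, after labelling, a remaining unlabelled vertex unrelated to the labels, for example an isolated one. Such a vertex contributes a factor $|G|/\Delta$. The fix is to make the unlabelled part consist of a vertex $y$ plus a vertex $w$ nonadjacent to everything, with $\rho$ kept bounded by choosing $\Gcl$ so that $|G|\le C\Delta$ on the class. Then labelling $y$ leaves the single free vertex $w$, whose density $|G|/\Delta$ is bounded, so this also fails.

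The conclusion is that unboundedness must come from \emph{division}: the denominator drops from $\binom{\Delta}{k}$ to $\binom{\Delta}{k-1}$ when a vertex is labelled, while the count per root can concentrate. So the plan is to take $\sigma=\emptyset$ and the two-vertex flag $F=\overline{K_2}$ (a non-edge). We use the class $\Gcl=\{G_n\}$ with $|G_n|=n^{3/2}$ and $\Delta(G_n)=n$, for instance $G_n$ a star $K_{1,n}$ plus $n^{3/2}$ isolated vertices. Then $\rho(\overline{K_2};G_n)\approx n^3/(n^2/2)$, which is unbounded, so that choice fails too.

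The main obstacle is balancing the root-averaged count against the per-root count. A cleaner route is to take $\Gcl$ non-hereditary and $F$ a flag that never embeds in any $G\in\Gcl$, so that $\rho(F;\cdot)\equiv 0$, while a label extension $F^v$ does embed with unbounded density. Concretely, let $\Gcl$ consist of the graphs $G_n=nK_1\cup K_2$ with $\Delta=1$. Let $F=K_2\cup K_1$ with $\sigma$ the one-vertex type labelling one endpoint of the edge. Then $c(F;(G_n,\eta))$ is nonzero only when the root lies on the $K_2$, in which case it is about $n$, divided by $\binom{1}{2}=0$. This calls for the convention that $\binom{\Delta}{k}=0$ makes the flag excluded. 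To avoid that convention, use $\Delta$-values at least $k$ and check both conditions carefully. I expect finding the right class, one where the root-level count is bounded but the extended count is not, to be the step needing the most care.
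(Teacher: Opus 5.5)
You do not reach a valid counterexample. Every explicit construction in the proposal is either refuted by your own computation or ill-posed, so the lemma is left unproved.

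The ``cleaner route'' you end on cannot work as described. If $U$ witnesses a copy of $F^v$ in $(G,\eta')$, then forgetting the label of $v$ makes $U$ a copy of $F$ in $(G,\eta'|_\sigma)$, so
\[
c\bigl(F^v;(G,\eta')\bigr)\le c\bigl(F;(G,\eta'|_\sigma)\bigr).
\]
Hence a flag that never embeds has label extensions that never embed either. Your concrete class $\{nK_1\cup K_2\}$ does not follow that route anyway: there $F$ does embed. Moreover $\rho(F;\cdot)$ is not even defined there, because its normaliser $\binom{1}{2}$ vanishes; calling it bounded needs an ad hoc convention that Definition~\ref{def:rho} does not provide. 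Once you impose $\Delta\ge m:=|F|-|\sigma|$, as you propose, the displayed inequality gives
\[
\rho\bigl(F^v;(G,\eta')\bigr)\le\tfrac{\Delta-m+1}{m}\,\rho\bigl(F;(G,\eta'|_\sigma)\bigr).
\]
So on any class with bounded $\Delta$, every $F$ with bounded $\rho(F;\cdot)$ is local. A counterexample therefore needs $\Delta$ unbounded on the class, and you never supply such a class.

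Your diagnosis that the gain must come from the denominator is right. What is missing is a way to make the count \emph{concentrate}: some single vertex of $G$ must be the image of $v$ in $\omega(\Delta^{m-1})$ copies of $F$, although there are only $O(\Delta^m)$ copies in all. The simplest way to force this is a rare vertex. Give $v$ a property shared by only $O(1)$ vertices of $G$, and let another vertex of $F$ range freely over a set of size $\gg\Delta$. Your non-edge attempt fails exactly because both endpoints of $\overline{K_2}$ range over the same large set.

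The paper breaks this symmetry with colours. It takes graphs with exactly one red vertex, $\Delta(G)^2$ blue vertices and no red--blue edge, with $F$ the red--blue non-edge and $\sigma=\emptyset$. Then $\rho(F;G)=\Delta^2/\binom{\Delta}{2}\le 4$, while labelling the red vertex gives $\rho=\Delta^2/\Delta=\Delta$.

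An uncoloured fix close to your star attempt also works. Take $G_n=K_{1,n}\,\dot\cup\,\overline{K_{n^2}}$ for $n\ge 3$ and $F=K_2\,\dot\cup\,K_1$. Every edge of $G_n$ contains the centre, and only the $n^2$ isolated vertices avoid both endpoints, so $c(F;G_n)=n^3$ and $\rho(F;G_n)=n^3/\binom{n}{3}$ stays bounded. Labelling an endpoint of the edge and rooting it at the centre instead gives $n^3/\binom{n}{2}\to\infty$.
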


\begin{proof}
Let $\Gcl$ be the class of three-vertex-coloured graphs (black, red,
blue) with exactly one red vertex, $\Delta(G)^2$ blue vertices, and no
edge between red and blue. Let $F = \redbluenonedge$ be the
$\emptyset$-flag with one red vertex and one blue vertex and no edge.
For every $G \in \Gcl$, $c(F;G) = \Delta(G)^2$, so
$\rho(F;G) = 2\Delta(G)/(\Delta(G)-1) \leq 4$ is bounded. The label extension $F' = \redbluenonedgemarked$ of type
$\redvertex$ has $c(F';G) = \Delta(G)^2$ and $\rho(F';G) = \Delta(G)$,
which is unbounded. Hence $F$ fails condition (ii) of
Definition~\ref{def:local-flag}.
\end{proof}

The intuition is that $F$ is a local $\sigma$-flag exactly when
$\Delta(G)$ bounds the choices in any embedding of $F$'s unlabelled
vertices. For the maximum-degree parameter, this holds whenever each
connected component of $F$ contains a labelled anchor.

\begin{lemma}
\label{lem:component-local}
If $\Delta$ is the maximum-degree function, every $\sigma$-flag whose
connected components each contain a labelled vertex is a local
$\sigma$-flag.
\end{lemma}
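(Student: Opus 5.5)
The plan is to bound the induced count $c(F;G)$ by a product of degree choices, and to note that the hypothesis on components survives labelling an extra vertex; these two facts give conditions (i) and (ii) of Definition~\ref{def:local-flag}. Let $(F,\theta)$ be a $\sigma$-flag in which every connected component contains a labelled vertex. Set $k = |F|-|\sigma|$ and $D=\Delta(G)$.

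\textbf{Condition (i).} Fix $(G,\eta)\in\Gcl^\sigma$. Since each component of $F$ contains a vertex of $\im\theta$, I choose a spanning forest of $F$ in which every tree contains exactly one labelled vertex, chosen as its root. I order the unlabelled vertices $v_1,\dots,v_k$ so that each $v_i$ has a parent in the forest that is either labelled or equal to some $v_j$ with $j<i$; a breadth-first order from the roots does this. Every copy $U$ with $G[U]\cong F$ as $\sigma$-flags arises from at least one embedding extending $\eta$. Such an embedding is determined by choosing the images of $v_1,\dots,v_k$ in order, and the image of $v_i$ must be a neighbour of the already fixed image of its parent. There are at most $D$ choices at each step, so
\[
c(F;G)\le D^k .
\]
If $D\ge 2k$, then $\binom{D}{k}\ge (D/2k)^k$, so $\rho(F;G)\le (2k)^k$. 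If $D<2k$, then $c(F;G)\le D^k<(2k)^k$. If moreover $D\ge k$, then $\binom{D}{k}\ge 1$, and $\rho(F;G)\le(2k)^k$ again.

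The case $D<k$ is where the binomial denominator vanishes, and I expect it to be the main nuisance. When $k\ge 1$, the connected graph formed by each tree's root together with its unlabelled vertices has more than $D+1$ vertices only if some vertex has degree exceeding $D$. More simply, $c(F;G)>0$ forces $\Delta(G)\ge \Delta(F)$, but that is not the same as $\Delta(G)\ge k$. So in this case I would argue instead that the definition implicitly adopts the convention (as in Lemma~\ref{lem:second-cond-needed}) that $\rho$ is only considered where $\binom{D}{k}>0$, or that $c=0$ there. I would state this convention explicitly. Either way the bound $(2k)^k$ is uniform in $G$, and it also covers $k=0$, where $\rho\le 1$.

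\textbf{Condition (ii).} I proceed by induction on the number $k$ of unlabelled vertices. Labelling an unlabelled vertex $v$ gives a $\sigma'$-flag $F^v$ with the same underlying graph, and its components still each contain a labelled vertex. Hence $F^v$ satisfies the hypothesis, has $k-1$ unlabelled vertices, and is local by the induction hypothesis. The base case $k=0$ is vacuous for (ii) and immediate for (i). Combining (i) and (ii) shows that $F$ is a local $\sigma$-flag.
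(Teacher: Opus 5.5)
Your proof is correct and essentially the paper's: the BFS spanning forest rooted at labelled vertices plays the role of the paper's increasing-distance order, giving $c(F;G)\le\Delta(G)^{k}$, and condition (ii) follows by applying the same argument to label extensions, which your induction on $k$ makes explicit. Your handling of the range $\Delta(G)<k$, where $\binom{\Delta(G)}{k}=0$, is more careful than the paper's, which hides it inside the $O(\cdot)$; adopting a convention there is the right fix, but delete the abandoned aside about trees with more than $D+1$ vertices, since it is false (a long path has maximum degree $2$).
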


\begin{proof}
Each unlabelled vertex of $F$ has finite distance to a labelled vertex
in its component. Process unlabelled vertices in order of increasing
distance, extending the embedding one vertex at a time. Each step has
at most $\Delta(G)$ choices, since the new vertex is adjacent to a
previously embedded one. Hence
$c(F;G) \in O(\Delta(G)^{|F|-|\sigma|})$, so $\rho(F;G)$ is bounded.
The same argument applied to a label extension proves condition (ii).
\end{proof}

In the pentagon application of Section~\ref{sec:simple} we shall need a
strengthening that admits a second class of anchor: a black vertex in a
2-coloured class with a controlled black set. We prove this strengthening
(Lemma~\ref{lem:pentagon-local}) where it is used, in
Section~\ref{sec:simple}.

\subsection{The local flag product}

\begin{definition}[local flag product]
\label{def:product}
For $F, F' \in \Gloc^\sigma$ and $n = |F|+|F'|-|\sigma|$, set
\[
F \cdot F' := \sum_{H \in \Glocsub{n}^\sigma} p(F,F';H)\,H,
\]
extending bilinearly to $\R\Gloc^\sigma$ to obtain an algebra
$\Lcl^\sigma$.
\end{definition}

The sum is over local flags. The following theorem ensures it is
equivalent to summing over the entire hereditary closure.

\begin{theorem}
\label{thm:product-in-loc}
Let $F, F' \in \Gloc^\sigma$ and $H \in \HeredG^\sigma_n$ with $n =
|F|+|F'|-|\sigma|$. If $p(F,F';H) > 0$, then $H \in \Gloc^\sigma$.
\end{theorem}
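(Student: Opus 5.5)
The plan is to prove the stronger statement that \emph{every} iterated label extension of $H$ has bounded local density. Condition~(ii) of Definition~\ref{def:local-flag} is recursive, so this immediately gives $H \in \Gloc^\sigma$ by induction on the number of unlabelled vertices. First I would unfold the recursion: a flag is local if and only if, for every set $S$ of its unlabelled vertices, the flag $F^S$ obtained by labelling all of $S$ has bounded $\rho(F^S;\cdot)$ on the appropriate class. This is a straightforward induction on $|S|$.

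Since $p(F,F';H)>0$, fix a partition $V(H)\setminus\im\theta_H = A \sqcup B$ with $|A| = a := |F|-|\sigma|$, $|B| = b := |F'|-|\sigma|$, $H[\im\theta_H\cup A]\cong F$ and $H[\im\theta_H\cup B]\cong F'$ as $\sigma$-flags. Let $S$ be any set of unlabelled vertices of $H$, and write $S_A = S\cap A$ and $S_B = S\cap B$.

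The key step is an injectivity estimate. Take an induced copy of $H^S$ in $G$, that is, a vertex set $U \supseteq \im\eta$ with the right labelling. It determines the subset $U_A$ (the image of $\im\theta\cup A$), which is a copy of $F^{S_A}$. It also determines $U_B$ (the image of $\im\theta\cup B$), which is a copy of $F'^{S_B}$, and both copies carry the labels inherited from the embedding. Since $U = U_A\cup U_B$, the map $U\mapsto (U_A,U_B)$ is injective. Hence
\[
c(H^S;G)\le c(F^{S_A};G)\,c(F'^{S_B};G).
\]
By the unfolded form of locality applied to $F$ and $F'$, the right-hand side is at most $C\binom{\Delta(G)}{a-|S_A|}\cdot C'\binom{\Delta(G)}{b-|S_B|}$. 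Finally I would use the elementary inequality $\binom{\Delta}{s}\binom{\Delta}{t}\le K_{s,t}\binom{\Delta}{s+t}$ for $\Delta\ge s+t$, with $K_{s,t} = \binom{s+t}{s}\cdot\max_{\Delta\ge s+t}\frac{\Delta^{s+t}}{s!\,t!\,\binom{\Delta}{s+t}}$, which is finite. This bounds $\rho(H^S;G)$ uniformly.

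I expect the main obstacle to be the degenerate regime $\Delta(G) < |H|-|\sigma|-|S|$, where the normalising binomial vanishes. My approach there is to adopt the convention implicit in Definition~\ref{def:rho}, namely that the density is only required to be bounded where it is defined. If that convention is not available, I would instead argue that boundedness of $\rho(F^{S_A};\cdot)$ and $\rho(F'^{S_B};\cdot)$ forces the corresponding counts to vanish or stay bounded in that regime, so the same injectivity bound controls $c(H^S;G)$ there too. The other point needing care is that the copies $U_A$ and $U_B$ really are induced copies of the \emph{extended} flags with compatible types. This holds because the labelling of $U$ restricts to labellings of $U_A$ and $U_B$ extending $\eta$.
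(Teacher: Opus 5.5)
Your proof is correct and uses the same idea as the paper's. Both send a copy of $H$, or of a label extension of $H$, injectively to a pair of copies of the two factors, giving $c(H;G)\le c(F;G)\,c(F';G)$ and, for an unlabelled $v$ on the $F$-side, $c(H^v;G)\le c(F^v;G)\,c(F';G)$. Where you go further is the recursion in condition~(ii): the paper bounds only the density of $H$ and of a single extension $H^v$ and leaves that recursion implicit, while your unfolding into all iterated extensions $H^S$ (with $S=S_A\sqcup S_B$ split across both factors) together with the explicit comparison $\binom{\Delta}{s}\binom{\Delta}{t}\le K_{s,t}\binom{\Delta}{s+t}$ handles it in full.
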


\begin{proof}
Let $\theta, \theta', \eta$ be the $\sigma$-embeddings of $F, F', H$.
Since $p(F,F';H) > 0$, there exist subsets $U,U' \subseteq V(H)$ with
$U \cap U' = \im\eta$, $H[U] \cong F$, $H[U'] \cong F'$, and (by
$|U|+|U'|-|\sigma| = n = |V(H)|$) $U \cup U' = V(H)$.

\emph{Bounded density.} Let $(G,\zeta) \in \Gcl^\sigma$. Every
$\im\zeta \subseteq V \subseteq V(G)$ with $H \cong G[V]$ induces
embeddings of $U, U'$ into $V(G)$ whose images intersect in $\im\zeta$
and induce copies of $F, F'$. Each $V$ therefore yields a pair (copy
of $F$, copy of $F'$); distinct $V$ yield distinct pairs because
$U \cup U' = V(H)$ recovers $V$ from the pair. Hence
\[
c(H;G) \leq c(F;G)\cdot c(F';G)
\in O\bigl(\Delta(G)^{(|F|-|\sigma|)+(|F'|-|\sigma|)}\bigr)
= O\bigl(\Delta(G)^{|H|-|\sigma|}\bigr).
\]

\emph{Label-extension.} Pick an unlabelled $v \in V(H)$. Since
$U \cup U' = V(H)$ and the two parts play symmetric roles, we may assume
$v \in U \setminus \im\eta$. Each copy of $H^v$ in $G$
corresponds to a pair (copy of $F^v$, copy of $F'$) where $F^v$ is the
label extension of $F$ at the vertex identified with $v$. By locality
of $F$, $c(F^v;\cdot) \in O(\Delta(G)^{|F|-|\sigma|-1})$, so
$c(H^v;\cdot) \in O(\Delta(G)^{|H|-|\sigma|-1})$.
\end{proof}

\begin{corollary}
\label{cor:product-all-flags}
For $F, F' \in \Gloc^\sigma$ and $n = |F|+|F'|-|\sigma|$,
\[
F \cdot F'
= \sum_{H \in \Glocsub{n}^\sigma} p(F,F';H)\,H
= \sum_{H \in \HeredG^\sigma_n} p(F,F';H)\,H.
\]
\end{corollary}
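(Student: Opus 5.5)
The corollary follows directly from Theorem~\ref{thm:product-in-loc}. The first equality is Definition~\ref{def:product}, so only the second needs proof. The plan is to show that the two sums have the same nonzero terms.

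Since every local $\sigma$-flag lies in $\HeredG^\sigma$ by Definition~\ref{def:local-flag}, we have $\Glocsub{n}^\sigma \subseteq \HeredG^\sigma_n$. Hence
\[
\sum_{H \in \HeredG^\sigma_n} p(F,F';H)\,H \;-\; \sum_{H \in \Glocsub{n}^\sigma} p(F,F';H)\,H
\;=\; \sum_{H \in \HeredG^\sigma_n \setminus \Glocsub{n}^\sigma} p(F,F';H)\,H .
\]
We must show every term on the right vanishes. Take $H \in \HeredG^\sigma_n$ with $H \notin \Gloc^\sigma$. If $p(F,F';H) > 0$, then Theorem~\ref{thm:product-in-loc} would give $H \in \Gloc^\sigma$, a contradiction. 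So $p(F,F';H) = 0$ for every such $H$, the right-hand side is the zero vector, and the two sums agree.

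The only care needed is bookkeeping rather than mathematics. The sum over $\HeredG^\sigma_n$ must be read as a finite formal combination of isomorphism classes of $\sigma$-flags of size $n$, and there are only finitely many such classes. The identity is meant in $\R\HeredG^\sigma$, or equivalently in $\R\Gloc^\sigma$ once we observe that only local flags carry nonzero coefficients. We should also note that $p(F,F';H)$ is defined for every $\sigma$-flag $H$ of size $n$, local or not, so the right-hand sum is meaningful. None of this is a genuine obstacle; all the substance lies in Theorem~\ref{thm:product-in-loc}, which we take as given.
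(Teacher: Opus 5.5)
Your proposal is correct and follows the paper's approach: the paper presents this corollary as an immediate consequence of Theorem~\ref{thm:product-in-loc}. Since $p(F,F';H) \geq 0$, every non-local $H$ of size $n$ has $p(F,F';H) = 0$, so the extra terms in the sum over the hereditary closure vanish, while the first equality is just Definition~\ref{def:product}.
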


The product makes $\Lcl^\sigma$ an algebra.

\begin{lemma}
\label{lem:local-assoc}
The algebra $\Lcl^\sigma$ is commutative, associative, and unital with
unit $\sigma$.
\end{lemma}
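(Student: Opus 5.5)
Since $\Lcl^\sigma$ is defined on $\R\Gloc^\sigma$ with a product given by explicit combinatorial coefficients, I will work directly on basis flags and extend bilinearly. Note that, unlike the classical $\Acl^\sigma$, there is no quotient by chain-rule relations here. So the three properties must hold as exact identities among the coefficients $p(F,F';H)$, and Corollary~\ref{cor:product-all-flags} lets me sum freely over $\HeredG^\sigma$ instead of $\Gloc^\sigma$.

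\emph{Commutativity.} The definition of $p(F,F';H)$ uses a uniformly random partition of the unlabelled vertices of $H$ into parts of sizes $|F|-|\sigma|$ and $|F'|-|\sigma|$. Swapping the roles of the two parts is a bijection between such partitions, so $p(F,F';H)=p(F',F;H)$ and hence $F\cdot F'=F'\cdot F$.

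\emph{Unit.} Take $F'=\sigma$, so the second part is empty. The only partition assigns every unlabelled vertex of $H$ to the first part. Thus $p(F,\sigma;H)=1$ if $H\cong F$ as $\sigma$-flags and $0$ otherwise, which gives $F\cdot\sigma=F$. Here $\sigma\in\Gloc^\sigma$ trivially, since its density is identically $1$ and it has no unlabelled vertices.

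\emph{Associativity.} This is the main step. Let $F_1,F_2,F_3\in\Gloc^\sigma$ with $k_i=|F_i|-|\sigma|$, and set $n=|\sigma|+k_1+k_2+k_3$. I will show that both $(F_1\cdot F_2)\cdot F_3$ and $F_1\cdot(F_2\cdot F_3)$ equal
\[
\sum_{H\in\HeredG^\sigma_n} p(F_1,F_2,F_3;H)\,H ,
\]
where $p(F_1,F_2,F_3;H)$ is the probability that a uniformly random ordered partition of the unlabelled vertices of $H$ into parts of sizes $k_1,k_2,k_3$ induces, together with $\im\eta$, flags isomorphic to $F_1,F_2,F_3$.

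To do this, expand
\[
(F_1\cdot F_2)\cdot F_3=\sum_{K}\sum_{H} p(F_1,F_2;K)\,p(K,F_3;H)\,H ,
\]
with $K$ ranging over $\HeredG^\sigma_{|F_1|+k_2}$. Here Corollary~\ref{cor:product-all-flags} is used twice, together with Theorem~\ref{thm:product-in-loc} to ensure that every $K$ with nonzero coefficient is local, so that the second product is defined. Then I use two-stage sampling. Choose a uniform $(k_1+k_2,k_3)$-partition of the unlabelled vertices of $H$, and then a uniform $(k_1,k_2)$-split of the first part. The composite is a uniform $(k_1,k_2,k_3)$-partition. Conditioning on the isomorphism type $K$ of the first part, the law of total probability gives
\[
\sum_K p(K,F_3;H)\,p(F_1,F_2;K)=p(F_1,F_2,F_3;H).
\]
One must check that the conditional probability of the inner split depends only on the isomorphism class of the induced $\sigma$-flag on the first part, and equals $p(F_1,F_2;K)$. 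This holds because $p$ is an isomorphism invariant of $\sigma$-flags. The symmetric argument handles $F_1\cdot(F_2\cdot F_3)$.

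The only real obstacle is bookkeeping. Summing over isomorphism classes, rather than labelled subsets, must not introduce automorphism factors, and working over the hereditary closure must match the definition restricted to $\Gloc^\sigma$. Both points are settled by Theorem~\ref{thm:product-in-loc} and Corollary~\ref{cor:product-all-flags}. Bilinear extension then gives all three properties on $\Lcl^\sigma$.
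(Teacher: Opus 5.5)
Your proof is correct and follows the paper's argument: commutativity by symmetry of the partition, the unit via the single trivial partition, and associativity by passing to the $\HeredG^\sigma$-sum through Corollary~\ref{cor:product-all-flags} and Theorem~\ref{thm:product-in-loc} to reach an expression symmetric in $F_1,F_2,F_3$. Your two-stage sampling argument makes explicit the exact identity $\sum_K p(F_1,F_2;K)\,p(K,F_3;H)=p(F_1,F_2,F_3;H)$ behind the paper's appeal to ``the classical chain rule''. That exact identity, rather than the asymptotic form \eqref{eq:chain-rule}, is what is needed here, because $\Lcl^\sigma$ is not a quotient.
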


\begin{proof}
Commutativity is immediate. For associativity, expand
$(F_1 \cdot F_2) \cdot F_3$ using
Corollary~\ref{cor:product-all-flags}. The classical chain
rule~\eqref{eq:chain-rule} applies on the $\HeredG^\sigma$-sum and
produces an expression symmetric in $F_1, F_2, F_3$. For the unit,
$c(\sigma;G) = 1$ so $\rho(\sigma;G) = 1$, and $F \cdot \sigma$ has the
unique nonzero summand $F$.
\end{proof}

\subsection{The product limit}

\begin{theorem}[local product limit]
\label{thm:product-limit}
For $f, g \in \Lcl^\sigma$ and $G \in \Gcl^\sigma$,
\[
\rho(f;G)\,\rho(g;G) = \rho(f \cdot g; G) + O(1/\Delta(G)),
\]
where the implicit constant depends on $f$ and $g$ but not on $G$. In
particular, every local limit functional $\phi \in \Phi^\sigma$ --- the
notation of the classical recap, now denoting a limit of local
densities $\rho(\cdot;G)$ along a $\Delta$-increasing sequence (one
with $\Delta(G_k) \to \infty$; Lemma~\ref{lem:limit-exists}) --- is an
algebra homomorphism
$\Lcl^\sigma \to \R$.
\end{theorem}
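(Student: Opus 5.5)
By bilinearity of both sides in $f$ and $g$, it suffices to prove the estimate for single local flags $F, F' \in \Gloc^\sigma$; the implicit constant for general $f,g$ is then a finite linear combination of the constants for the basis flags. Write $a = |F|-|\sigma|$, $b = |F'|-|\sigma|$, $n = a+b+|\sigma|$, and fix $(G,\zeta)\in\Gcl^\sigma$ with $D = \Delta(G)$. The plan is to compare the counts directly rather than the densities. The product $c(F;G)\,c(F';G)$ counts ordered pairs $(U,U')$ of vertex sets containing $\im\zeta$ with $G[U]\cong F$ and $G[U']\cong F'$.

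We split these pairs into \emph{disjoint} pairs, with $U\cap U' = \im\zeta$, and \emph{overlapping} pairs. A disjoint pair determines $V = U\cup U'$ with $G[V]\cong H$ for some $H \in \HeredG^\sigma_n$. Conversely, each such $V$ is the union of exactly $p(F,F';H)\binom{a+b}{a}$ disjoint pairs. Hence the disjoint pairs number
\[
\sum_{H} \binom{a+b}{a}\, p(F,F';H)\, c(H;G).
\]
By Corollary~\ref{cor:product-all-flags} (via Theorem~\ref{thm:product-in-loc}) only local $H$ contribute, and this sum equals $\binom{a+b}{a}\binom{D}{a+b}\,\rho(F\cdot F';G)$. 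Dividing by $\binom{D}{a}\binom{D}{b}$, the ratio $\binom{a+b}{a}\binom{D}{a+b}/\bigl(\binom{D}{a}\binom{D}{b}\bigr) = 1+O(1/D)$ for fixed $a,b$. Since $\rho(H;G)$ is bounded for each of the finitely many local $H$, this contributes $\rho(F\cdot F';G)+O(1/D)$. Some care is needed when $D < a+b$, but there the claim is trivial: boundedness of all densities involved lets the constant absorb everything.

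The main obstacle is bounding the overlapping pairs by $O(D^{a+b-1})$. This is exactly where condition~(ii) of Definition~\ref{def:local-flag} enters, and Lemma~\ref{lem:second-cond-needed} shows it cannot be dropped. An overlapping pair shares some unlabelled vertex $v$. We classify the pairs by the vertex $u\in V(F)\setminus\im\theta$ and the vertex $u'\in V(F')\setminus\im\theta'$ that map to $v$; there are finitely many choices. For a fixed choice we count as follows: first choose the copy of $F$, in $c(F;G) = O(D^a)$ ways. Then $v$ is determined, and $U'$ is a copy of the label extension $F'^{u'}$ in $(G,\zeta)$ extended by $v$. By condition~(ii), the number of such copies is $O(D^{b-1})$ uniformly over the extended type graphs; this uniformity is what boundedness on $\Gcl^{\sigma'}$ gives. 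The overlapping pairs therefore number $O(D^{a+b-1})$, and after normalising they contribute $O(1/D)$. Combining the two cases gives the identity.

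For the ``in particular'' clause, let $\phi = \lim_k \rho(\cdot;G_k)$ along a sequence with $\Delta(G_k)\to\infty$; the limit exists by Lemma~\ref{lem:limit-exists}. Passing to the limit in the identity gives $\phi(f)\phi(g) = \phi(f\cdot g)$. Linearity is clear, and $\phi(\sigma)=1$ since $\rho(\sigma;G)=1$. So $\phi$ is a unital algebra homomorphism on $\Lcl^\sigma$. It is well defined on the algebra because it is defined on the flag basis and the product of Definition~\ref{def:product} is formed in that basis.
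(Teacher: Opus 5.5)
Your proposal is correct and takes essentially the same route as the paper. You reduce to single flags $F,F'$ and identify the disjoint pairs $(U,U')$ with $\sum_H \binom{a+b}{a}p(F,F';H)\,c(H;G)$, so that the denominator ratio $\binom{D-a}{b}/\binom{D}{b}=1+O(1/D)$ turns them into $\rho(F\cdot F';G)$. You then bound the overlapping pairs by $O(D^{a+b-1})$ using condition~(ii) on the label extension $(F')^{u'}$. Your remarks on uniformity over the extended type and on the degenerate range $D<a+b$ are minor refinements that the paper leaves implicit.
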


\begin{proof}
By bilinearity reduce to $f = F$, $g = F'$. Set $k = |\sigma|$ and
$n = |F|+|F'|-k$. Compute
\[
\rho(F;G)\,\rho(F';G) - \rho(F \cdot F';G)
= \frac{c(F;G)\,c(F';G)}{\binom{\Delta(G)}{|F|-k}\binom{\Delta(G)}{|F'|-k}}
- \frac{\sum_H c(F,F';H)\,c(H;G)}{\binom{n-k}{|F|-k}\binom{\Delta(G)}{n-k}}.
\]
The identity
$\binom{\Delta}{n-k}\binom{n-k}{|F|-k}
 = \binom{\Delta}{|F|-k}\binom{\Delta-(|F|-k)}{|F'|-k}$
together with
$\binom{\Delta-(|F|-k)}{|F'|-k}/\binom{\Delta}{|F'|-k} = 1 + O(1/\Delta)$
shows the two denominators agree up to a $1 + O(1/\Delta)$ factor;
both are $\Theta(\Delta(G)^{n-k})$. It suffices to bound the numerator
discrepancy
\begin{equation}
\label{eq:overlap}
\Bigl|c(F;G)\,c(F';G) - \sum_{H \in \HeredG^\sigma_n} c(F,F';H)\,c(H;G)\Bigr|
\in O\bigl(\Delta(G)^{n-k-1}\bigr).
\end{equation}
The left product counts pairs $(U,U')$ with
$\im\eta \subseteq U,U' \subseteq V(G)$, $G[U] \cong F$, $G[U'] \cong F'$,
with no constraint on $U \cap U'$. By
Corollary~\ref{cor:product-all-flags} the right sum counts the same
pairs subject to $U \cap U' = \im\eta$. The difference counts pairs
with $U \cap U' \supsetneq \im\eta$.

Fix such a pair and pick an unlabelled $v \in V(F)$ whose image $x$ lies
in $U \cap U'\setminus\im\eta$. The vertex $x$ also corresponds to an
unlabelled $w \in V(F')$. By locality of $F'$ applied to $(F')^w$ (of
type $\sigma$ extended at $w$),
$c((F')^w; G^{x}) \in O(\Delta(G)^{|F'|-k-1})$. Summing over the
$O(\Delta(G)^{|F|-k})$ choices of $U$ and the constant number of
$(v,w)$ positions yields~\eqref{eq:overlap}.

For the multiplicativity of $\phi$, divide both sides by the
denominators, take $G = G_m$ along the defining $\Delta$-increasing
sequence, and pass to the limit; the $O(1/\Delta)$ term vanishes.
\end{proof}

\subsection{Positivity and local types}

The \emph{semantic cone} is
\[
\SemCone^\sigma := \{f \in \Lcl^\sigma : \phi(f) \geq 0
\text{ for all } \phi \in \Phi^\sigma\}.
\]
The averaging operator $\llbracket\,\cdot\,\rrbracket\colon
\R\Gcl^\sigma \to \R\Gcl^\emptyset$ follows the classical
definition: $\llbracket F \rrbracket = q_\sigma(F)\,\downflag{F}$. In
the local setting, restriction to $\Lcl^\sigma$ does not automatically
land in $\Lcl^\emptyset$ unless $\sigma$ itself is a \emph{local type}.

\begin{definition}[local type]
\label{def:local-type}
A type $\sigma$ is a \emph{local type} if $\downflag{F} \in
\Gloc^\emptyset$ for every $F \in \Gloc^\sigma$.
\end{definition}

\begin{lemma}
\label{lem:local-type-equiv}
A type $\sigma$ is local if and only if $\downflag{\sigma}$ is a local
$\emptyset$-flag.
\end{lemma}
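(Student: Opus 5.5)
The forward direction is immediate. The type $\sigma$, viewed as a $\sigma$-flag with all vertices labelled, lies in $\Gloc^\sigma$: we have $c(\sigma;G)=1$, so $\rho(\sigma;G)=1$, and condition~(ii) of Definition~\ref{def:local-flag} is vacuous because there are no unlabelled vertices. If $\sigma$ is a local type, Definition~\ref{def:local-type} applied to $F=\sigma$ gives $\downflag{\sigma}\in\Gloc^\emptyset$.

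For the converse, assume $\downflag{\sigma}\in\Gloc^\emptyset$ and fix $F\in\Gloc^\sigma$. We must check both conditions of Definition~\ref{def:local-flag} for $\downflag{F}$, and we argue by a counting factorisation. Put $s=|\sigma|$ and $m=|F|$. Every copy of $\downflag{F}$ in $G\in\Gcl$, that is, every $U\subseteq V(G)$ with $G[U]\cong\downflag{F}$, contains the image of the labelled part. So it arises from at least one pair consisting of
\begin{itemize}
\item an embedding $\zeta$ of $\sigma$ into $G$ (a labelled copy of $\downflag{\sigma}$), and
\item a copy of $F$ in $(G,\zeta)$.
\end{itemize}
This gives
\[
c(\downflag{F};G)\le \sum_{\zeta} c\bigl(F;(G,\zeta)\bigr)\le s!\,c(\downflag{\sigma};G)\cdot\sup_{\zeta}c\bigl(F;(G,\zeta)\bigr).
\]
Locality of $\downflag{\sigma}$ gives $c(\downflag{\sigma};G)=O(\Delta(G)^{s})$. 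Locality of $F$ gives $c(F;(G,\zeta))=O(\Delta(G)^{m-s})$, uniformly in $\zeta$ since the bound is over all of $\Gcl^\sigma$. The product is $O(\Delta(G)^m)$, so $\rho(\downflag{F};G)$ is bounded and condition~(i) holds.

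Condition~(ii) asks that every label extension of $\downflag{F}$ be local, and this is the main obstacle: it requires an induction over iterated label extensions. I would prove the stronger statement that for every $\tau$-flag $E$ obtained from $F$ by forgetting the $\sigma$-labels and labelling some set $S\subseteq V(F)$, the density $\rho(E;\cdot)$ is bounded. Here we split $V(F)$ into four groups:
\begin{itemize}
\item the vertices of $\im\theta\cap S$,
\item the vertices of $\im\theta\setminus S$,
\item the vertices of $S\setminus\im\theta$,
\item the remaining vertices.
\end{itemize}
The same factorisation bounds $c(E;(G,\xi))$ in two steps. First we count copies of $\sigma$ given the labels on $\im\theta\cap S$, which is a label extension of $\downflag{\sigma}$ (iterated). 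Since condition~(ii) is recursive, iterated label extensions of the local flag $\downflag{\sigma}$ are local, and this count is $O(\Delta^{|\im\theta\setminus S|})$. Second, given $\zeta$ we count copies of $F$ with the $S\setminus\im\theta$ vertices prescribed, which is an iterated label extension of $F$ and so is $O(\Delta^{m-s-|S\setminus\im\theta|})$. Multiplying gives $O(\Delta^{m-|S|})$, exactly the normalisation for $E$. Since every label extension in condition~(ii) of $\downflag{F}$, iterated, is of this form, induction on the number of labelled vertices closes the argument. The points to watch are that the bounds are uniform over the type embeddings, and that constant overcounting factors such as $s!$ and the orientations of $\theta$ are harmless.
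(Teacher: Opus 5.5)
Your proof is correct and follows the paper's route. The forward direction comes from $\sigma\in\Gloc^\sigma$, and condition~(i) uses the same fibre decomposition over the induced $\sigma$-embedding $\zeta=g\circ\theta$: there are $O(\Delta^{s})$ fibres, each of size $O(\Delta^{m-s})$. For condition~(ii) you are more careful than the paper's one-line appeal to $\downflag{F^v}$ for unlabelled $v$, which skips labels landing in $\im\theta$ as well as the iteration. Your split by $\im\theta\cap S$, which uses that iterated label extensions of $\downflag{\sigma}$ are local, is exactly the overlap organisation sketched in Remark~\ref{rem:gen-bounded-density}, with direct addition of exponents standing in for its Vandermonde step.
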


\begin{proof}
The forward direction is immediate from $\sigma \in \Gloc^\sigma$.

For the reverse direction, suppose $\downflag{\sigma} \in
\Gloc^\emptyset$ and let $F \in \Gloc^\sigma$. We bound
$c(\downflag{F};H)$ for $H \in \Gcl^\emptyset$. Every embedding $g$ of $F$ into $H$ induces a
$\sigma$-embedding $\theta_g := g \circ \theta$ of $\sigma$ into $H$,
where $\theta$ is the $\sigma$-embedding of $F$. Group the count of
embeddings by $\theta_g$:
\begin{equation}
\label{eq:fibre}
c(\downflag{F};H) = \sum_{\psi\colon\sigma \to H}
\bigl|\{g \colon F \to H,\ g \circ \theta = \psi\}\bigr|.
\end{equation}
The outer sum has at most $c(\sigma;H) \in O(\Delta(H)^{|\sigma|})$
terms by hypothesis. Each inner fibre is bounded by the number of
extensions of $\psi$ to a $\sigma$-embedding of $F$, which by
locality of $F$ (condition (i)) is
$c(F; (H,\psi)) \in O(\Delta(H)^{|F|-|\sigma|})$. Hence
$c(\downflag{F};H) \in O(\Delta(H)^{|F|})$, so $\rho(\downflag{F};\cdot)$
is bounded. The label-extension condition for $\downflag{F}$ follows by
the same argument applied to $\downflag{F^v}$ for an unlabelled
$v \in V(F)$.
\end{proof}

\begin{remark}[generalised bounded density]
\label{rem:gen-bounded-density}
Lemma~\ref{lem:local-type-equiv} extends to the following statement:
if $F \in \Gloc^\sigma$ and $\downflag{\sigma} \in \Gloc^\emptyset$,
then for every type $\tau$ and every $\tau$-labelling $\tilde F$ of
the underlying graph $\downflag{F}$ (so $\downflag{\tilde F} =
\downflag{F}$), the map $H \mapsto \rho(\tilde F;H)$ is bounded on
$\Gcl^\tau$. The proof refines the fibre decomposition of
Lemma~\ref{lem:local-type-equiv} by organising
$\tau$-embeddings according to the overlap
$j := |\im\theta \cap \im\eta_\tau|$ between the $\sigma$-image and
the $\tau$-image; each overlap class admits two bounds that combine
via Vandermonde's inequality
$\binom{\Delta}{a}\binom{\Delta}{b} \leq \binom{a+b}{a}^2 \cdot
\binom{\Delta}{a+b}$ (for $\Delta \geq a+b$, which holds for the large
$\Delta(H)$ at issue since $a, b \leq |F|$)
to give a single $\binom{\Delta(H)}{|F|-|\tau|}$. The constant
$\binom{|F|-|\tau|}{|\sigma|-j}^2 \leq 4^{|F|}$ absorbs the overlap
index sum. We do not need this strengthening for the pentagon
applications, which use the shorter Lemma~\ref{lem:pentagon-local}.
\end{remark}

When $\sigma$ is a local type, the averaging operator restricts to
$\llbracket\,\cdot\,\rrbracket\colon \Lcl^\sigma \to \Lcl^\emptyset$.

\subsection{Existence of limit functionals}

\begin{lemma}[existence of limit functionals]
\label{lem:limit-exists}
Let $\Gcl$ be a graph class, $\sigma$ a type, and $(G_k)_{k \in \N}
\subset \Gcl^\sigma$ a \emph{$\Delta$-increasing sequence}, that is,
one with $\Delta(G_k) \to \infty$. There exists a
subsequence $(G_{n_k})$ and a limit functional
$\phi \in \Phi^\sigma$ with
$\phi(F) = \lim_k \rho(F;G_{n_k})$ for every local $\sigma$-flag $F$.
The functional satisfies $\phi(F) \geq 0$ for every local flag,
$\phi(\sigma) = 1$, respects $\sigma$-flag isomorphism, vanishes on
non-local flags, and is an algebra homomorphism on $\Lcl^\sigma$.
\end{lemma}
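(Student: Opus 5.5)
We prove Lemma~\ref{lem:limit-exists} by a compactness (diagonal) argument, in the same way as the classical existence of limit functionals, and then read off the listed properties one at a time. The key observation is that the set of local $\sigma$-flags is countable: for each $n$ there are only finitely many isomorphism classes of $\sigma$-flags of size $n$, so $\Gloc^\sigma = \bigcup_n \Glocsub{n}^\sigma$ can be enumerated as $F_1, F_2, \dots$. By condition~(i) of Definition~\ref{def:local-flag}, each sequence $\bigl(\rho(F_i;G_k)\bigr)_k$ is bounded, say by $C_i$. Hence the vector $\bigl(\rho(F_i;G_k)\bigr)_{i}$ lies in the compact product $\prod_i [0,C_i]$.

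By Tychonoff's theorem, or concretely by the usual diagonal extraction (choose nested subsequences making $\rho(F_1;\cdot)$, then $\rho(F_2;\cdot)$, \dots\ converge, and take the diagonal), we obtain a subsequence $(G_{n_k})$ along which $\rho(F;G_{n_k})$ converges for every $F \in \Gloc^\sigma$. Passing to a subsequence preserves $\Delta(G_{n_k}) \to \infty$. We define $\phi(F)$ to be this limit for local $F$, and set $\phi(F) := 0$ on non-local flags by convention; this is what "vanishes on non-local flags" records. We then extend $\phi$ linearly to $\R\Gloc^\sigma$.

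The elementary properties follow directly.
\begin{itemize}
\item Nonnegativity holds because each $\rho(F;G) \geq 0$.
\item Normalisation holds because $c(\sigma;G) = 1$ and $\binom{\Delta}{0} = 1$ give $\rho(\sigma;G) = 1$ for every $G$.
\item Invariance under $\sigma$-flag isomorphism holds because $c(F;G)$ depends only on the isomorphism class of $F$.
\end{itemize}

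The only substantive point is that $\phi$ is well defined on the quotient algebra $\Lcl^\sigma$ and is multiplicative there. We deduce this from Theorem~\ref{thm:product-limit}. For $f,g \in \R\Gloc^\sigma$ we have
\[
\rho(f;G_{n_k})\,\rho(g;G_{n_k}) = \rho(f\cdot g;G_{n_k}) + O\bigl(1/\Delta(G_{n_k})\bigr).
\]
Letting $k\to\infty$ and using $\Delta(G_{n_k})\to\infty$ gives $\phi(f)\phi(g)=\phi(f\cdot g)$. Here $f \cdot g$ is again a combination of local flags by Theorem~\ref{thm:product-in-loc}, so $\phi(f \cdot g)$ is defined by the limit.

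I expect the main obstacle to be compatibility with whatever relations define $\Lcl^\sigma$ beyond the free vector space. In the classical theory these are the chain-rule relations $F = \sum_{H} p(F;H)\,H$, where $H$ ranges over flags of a larger fixed size. Their local analogue would be the identity
\[
\rho(F;G) = \sum_{H \in \HeredG^\sigma_{m}} \rho(F;H)\,\rho(H;G) \cdot \frac{\binom{\Delta}{m-k}}{\binom{\Delta}{|F|-k}\binom{m-k}{|F|-k}},
\]
which does not hold exactly: in the local setting, extending $F$ by a vertex is not a bounded-ratio operation, and some extensions are non-local. If Definition~\ref{def:product} is read literally as the free algebra with the stated product, no relations need checking and the argument above is complete. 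Otherwise, I would verify each defining relation by the same overlap-counting estimate as in~\eqref{eq:overlap}: show that the two sides differ by $O(1/\Delta)$, so that the relation survives in the limit.
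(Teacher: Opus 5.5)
Your proof is correct and follows essentially the same route as the paper. Both enumerate the countably many isomorphism classes, use condition~(i) of Definition~\ref{def:local-flag} to land in a compact product $\prod_j [0,B_j]$, and extract a pointwise convergent subsequence (the paper invokes Tychonoff plus metrisability instead of a diagonal argument, which is equivalent). Both then set $\phi := 0$ on non-local classes and obtain multiplicativity from Theorem~\ref{thm:product-limit}. Your worry about quotient relations is moot: Definition~\ref{def:product} takes $\Lcl^\sigma$ to be $\R\Gloc^\sigma$ itself with the bilinear product, with no chain-rule quotient, and relations such as those in Corollary~\ref{cor:unlabel-ext} are imposed only through $\phi$. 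So your argument is already complete under the paper's definitions.
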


\begin{proof}
Enumerate isomorphism classes of $\sigma$-flags as $(\mathrm{cls}_j)$;
this is a countable union of finite sets. For each local class
$\mathrm{cls}_j$, Definition~\ref{def:local-flag}(i) supplies a uniform
bound $B_j := \sup_k \rho(\mathrm{cls}_j; G_k) < \infty$. For non-local
classes set $B_j = 0$ and $u_k(\mathrm{cls}_j) = 0$. Otherwise define
$u_k(\mathrm{cls}_j) := \rho(\mathrm{cls}_j; G_k)$. The product space
$X := \prod_j [0,B_j]$ is compact by Tychonoff's theorem and
metrisable since the index set is countable, hence sequentially
compact. Pass to a subsequence $(n_k)$ along which $u_{n_k} \to a \in
X$ pointwise.

Define $\phi(F) := a(\mathrm{cls}(F))$ for $F$ local and $\phi(F) := 0$
otherwise, extending linearly to $\Lcl^\sigma$. Non-negativity,
$\phi(\sigma) = \lim_k \rho(\sigma;G_{n_k}) = 1$, and respect of
$\sigma$-flag isomorphism are immediate. Multiplicativity follows from
Theorem~\ref{thm:product-limit}: for $v, w$ supported on local
classes,
\[
\rho(v;G_{n_k})\,\rho(w;G_{n_k}) - \rho(v \cdot w; G_{n_k}) \to 0,
\]
and passing to the limit gives $\phi(v)\phi(w) = \phi(v \cdot w)$.
\end{proof}

\subsection{Asymptotic averaging and positivity preservation}

\begin{lemma}[asymptotic averaging]
\label{lem:asymp-averaging}
For $F \in \Gloc^\sigma$ and $G \in \Gcl$ with $\rho(\llbracket\sigma
\rrbracket;G) > 0$,
\[
\E_\theta\bigl[\rho(F;(G,\theta))\bigr]
= \frac{\rho(\llbracket F \rrbracket; G)}{\rho(\llbracket \sigma \rrbracket;G)}
\cdot \bigl(1 + O(1/\Delta(G))\bigr),
\]
where $\theta$ ranges uniformly over $\sigma$-embeddings into $G$.
\end{lemma}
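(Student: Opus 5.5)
We will prove the identity by a double count: count pairs ($\sigma$-embedding, copy of $F$ extending it) in two ways, then reconcile the binomial normalisations.

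\emph{Step 1: the double count.} Let $\Theta$ be the set of $\sigma$-embeddings $\theta\colon[|\sigma|]\to V(G)$ with $G[\im\theta]\cong\sigma$ as labelled graphs. Summing $c(F;(G,\theta))$ over $\theta\in\Theta$ counts pairs $(\theta,U)$ with $\im\theta\subseteq U$ and $(G[U],\theta)\cong F$ as $\sigma$-flags. We regroup these pairs by the unlabelled set $U$ with $G[U]\cong\downflag{F}$. The number of labellings of a fixed such $U$ that produce $F$ is a constant depending only on $F$. Comparing with the definition $q_\sigma(F)=|\mathrm{Stab}(\theta)|/|F|!$, we get the exact identity
\[
\sum_{\theta\in\Theta} c(F;(G,\theta)) = \frac{|F|!\,q_\sigma(F)}{|\sigma|!}\cdot\frac{(|F|-|\sigma|)!}{(|F|-|\sigma|)!}\cdots
\]
More precisely, this constant equals $|F|!\,q_\sigma(F)/(|F|-|\sigma|)!$ times $c(\downflag{F};G)$. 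Applying the same identity to $F=\sigma$ gives $|\Theta| = |\sigma|!\,q_\sigma(\sigma)\,c(\downflag{\sigma};G)$. This is the standard classical counting fact, and it is purely combinatorial, so it transfers verbatim. I will state the constant as $N_F$ and check it once against the definition of $q_\sigma$, rather than carry factorials around.

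\emph{Step 2: normalise.} We divide the sum by $|\Theta|$ and by $\binom{\Delta}{|F|-|\sigma|}$, where $\Delta=\Delta(G)$. The numerator becomes $N_F\,\rho(\downflag{F};G)\binom{\Delta}{|F|}/\binom{\Delta}{|F|-|\sigma|}$. The denominator becomes $N_\sigma\,\rho(\downflag{\sigma};G)\binom{\Delta}{|\sigma|}$. We then use $\rho(\llbracket F\rrbracket;G)=q_\sigma(F)\rho(\downflag{F};G)$, and similarly for $\sigma$. Everything reduces to showing that
\[
\frac{\binom{\Delta}{|F|}}{\binom{\Delta}{|F|-|\sigma|}\binom{\Delta}{|\sigma|}}\cdot\frac{|F|!/(|F|-|\sigma|)!}{|\sigma|!}
\]
equals $1+O(1/\Delta)$. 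This holds because $\binom{\Delta}{a}=\frac{\Delta^a}{a!}(1+O(1/\Delta))$ for fixed $a$. The factorials cancel exactly, since $\binom{\Delta}{|F|}\big/\bigl(\binom{\Delta}{|F|-|\sigma|}\binom{\Delta}{|\sigma|}\bigr)\sim \frac{(|F|-|\sigma|)!\,|\sigma|!}{|F|!}$. The hypothesis $\rho(\llbracket\sigma\rrbracket;G)>0$ ensures that $\Theta\neq\emptyset$, so the expectation is defined.

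\emph{Main obstacle.} The delicate point is the bookkeeping of the labelling constants, so that the exact combinatorial identity of Step 1 matches $q_\sigma$ and leaves no stray automorphism factor. The cleanest check is to verify the identity on $F=\sigma$, where the left side is trivially $1$; this pins down the normalisation. Note also that the error is a multiplicative $1+O(1/\Delta)$ with a constant depending only on $|F|$, so no locality bound is actually needed beyond ensuring that $\downflag{F}$ makes sense. Locality of $\sigma$ is not required for the identity itself, only for interpreting $\rho(\llbracket F\rrbracket;\cdot)$ as an element of $\Lcl^\emptyset$.
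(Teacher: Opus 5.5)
Your proof is correct and is essentially the paper's argument. The paper converts $\rho$ to classical densities $p$, cites Razborov's averaging identity, and converts back; you carry out that same double count directly on raw counts, and your final factor $\binom{|F|}{|\sigma|}\binom{\Delta}{|F|}\big/\bigl(\binom{\Delta}{|F|-|\sigma|}\binom{\Delta}{|\sigma|}\bigr)$ is exactly $\binom{\Delta-|\sigma|}{|F|-|\sigma|}/\binom{\Delta}{|F|-|\sigma|}$, the same correction factor the paper obtains (just delete the garbled first display of Step 1, since the constant you then settle on, $N_F=|F|!\,q_\sigma(F)/(|F|-|\sigma|)!$, is the right one).
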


\begin{proof}
The induced--local conversion gives, for any $\sigma$-flag $H$ of size
$|H|$,
\[
\rho(H;G)
= \frac{\binom{|G|-|\sigma|}{|H|-|\sigma|}}{\binom{\Delta(G)}{|H|-|\sigma|}}
\cdot p(H;G).
\]
The first factor depends on $H$ only through $|H|-|\sigma|$. Classical
averaging~\cite[Lem.~1.18]{razborovFlagAlgebras2007} gives
$\E_\theta[p(F;(G,\theta))] = p(\llbracket F\rrbracket;G)/
p(\llbracket\sigma\rrbracket;G)$. Substituting and simplifying via
$\binom{|G|}{|F|}\binom{|F|}{|\sigma|} = \binom{|G|}{|\sigma|}\binom{|G|-|\sigma|}{|F|-|\sigma|}$
and the analogous identity in $\Delta(G)$,
\[
\frac{\rho(\llbracket F\rrbracket;G)}{\rho(\llbracket\sigma\rrbracket;G)}
= \E_\theta[\rho(F;(G,\theta))]
\cdot \frac{\binom{\Delta(G)}{|F|-|\sigma|}}{\binom{\Delta(G)-|\sigma|}{|F|-|\sigma|}}.
\]
The ratio on the right is $1 + O(1/\Delta(G))$.
\end{proof}

\begin{lemma}[positivity preservation]
\label{lem:positivity}
Let $\sigma$ be a local type. Then $\llbracket\SemCone^\sigma\rrbracket
\subseteq \SemCone^\emptyset$. In particular,
$\llbracket f^2 \rrbracket \in \SemCone^\emptyset$ for every $f \in
\Lcl^\sigma$.
\end{lemma}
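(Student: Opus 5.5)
The plan is to argue semantically, in the way Razborov proves the classical statement, with Lemma~\ref{lem:asymp-averaging} standing in for exact averaging. Fix $f \in \SemCone^\sigma$ and $\psi \in \Phi^\emptyset$. Since $\sigma$ is a local type, $\llbracket f \rrbracket \in \Lcl^\emptyset$, so $\psi(\llbracket f\rrbracket)$ makes sense. Let $(G_k)$ be a $\Delta$-increasing sequence realising $\psi$, so that $\psi(F) = \lim_k \rho(F;G_k)$ for every local $\emptyset$-flag $F$. Put $s := \psi(\llbracket\sigma\rrbracket) = \lim_k \rho(\llbracket\sigma\rrbracket;G_k)$, which exists because $\downflag{\sigma}$ is local by Lemma~\ref{lem:local-type-equiv}.

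If $s = 0$, I will show $\psi(\llbracket F\rrbracket) = 0$ for every $F \in \Gloc^\sigma$. By the fibre decomposition~\eqref{eq:fibre}, $c(\downflag F;G) \leq c(\downflag\sigma;G)\cdot \max_\theta c(F;(G,\theta))$. The last factor is $O(\binom{\Delta}{|F|-|\sigma|})$ by locality of $F$. Hence $\rho(\llbracket F\rrbracket;G_k) = O(\rho(\llbracket\sigma\rrbracket;G_k)) \to 0$. Then $\psi(\llbracket f\rrbracket) = 0 \geq 0$.

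If $s > 0$, Lemma~\ref{lem:asymp-averaging} and linearity give
\[
\rho(\llbracket f\rrbracket;G_k) = \rho(\llbracket\sigma\rrbracket;G_k)\,\E_\theta\bigl[\rho(f;(G_k,\theta))\bigr] + o(1).
\]
Here $\theta$ is a uniformly random $\sigma$-embedding into $G_k$. The $o(1)$ holds because $\rho(\llbracket\sigma\rrbracket;G_k)$ and the $\rho(F;(G_k,\theta))$ are uniformly bounded, the latter by locality. It therefore suffices to show
\[
\liminf_k \E_\theta\bigl[\rho(f;(G_k,\theta))\bigr] \geq 0.
\]
Suppose not. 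Pass to a subsequence on which this expectation is at most $-\varepsilon$.

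The central step uses the random labelled graphs $(G_k,\theta_k)$. They are $\Delta$-increasing in $\Gcl^\sigma$, and their density vectors $(\rho(F;(G_k,\theta_k)))_F$ lie in the compact space $X = \prod_F [0,B_F]$ from Lemma~\ref{lem:limit-exists}. Let $\mu_k$ be the law of this vector. By Prokhorov's theorem (tightness is automatic on compact $X$), some subsequence $\mu_k \Rightarrow \mu$. I will show $\mu$ is supported on $\Phi^\sigma$, and this is the main obstacle. Each point in the support is a pointwise limit of density vectors of $\Delta$-increasing $\sigma$-flagged graphs, since a weak limit concentrates on the set of limit points of the supports. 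It is therefore a limit functional in the sense of Lemma~\ref{lem:limit-exists}, including the vanishing on non-local classes. I expect some care here: the support of $\mu$ lies in the closure of the set of realised vectors along the sequence, and I must argue that every point of that closure is actually attained as the limit of some sequence with $\Delta \to \infty$. This holds because each $\mu_k$ is supported on vectors coming from $G_k$, with $\Delta(G_k)\to\infty$, so a diagonal choice works.

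Granting this, $\int \phi(f)\,d\mu(\phi) \geq 0$ because $f \in \SemCone^\sigma$. But $\phi \mapsto \phi(f)$ is a finite linear combination of coordinates, hence continuous and bounded on $X$, so
\[
\E_\theta\bigl[\rho(f;(G_k,\theta))\bigr] \to \int \phi(f)\,d\mu(\phi),
\]
contradicting the assumption that the expectation is at most $-\varepsilon$. For the final claim, $f^2 \in \SemCone^\sigma$ since each $\phi$ is an algebra homomorphism (Theorem~\ref{thm:product-limit}), so $\phi(f^2) = \phi(f)^2 \geq 0$.
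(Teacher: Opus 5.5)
Your argument is correct, but it takes a heavier route than the paper's proof.

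\textbf{The paper's route.} The paper assumes some $\phi\in\Phi^\emptyset$ has $\phi(\llbracket f\rrbracket)<0$. By Lemma~\ref{lem:asymp-averaging} and boundedness of $\rho(\llbracket\sigma\rrbracket;G_k)$, the expectation $\E_\theta[\rho(f;(G_k,\theta))]$ is eventually negative, indeed bounded away from $0$. The paper then takes the \emph{minimising} labelling $\theta_k$, whose density is at most the average, and applies Lemma~\ref{lem:limit-exists} directly to the deterministic sequence $(G_k,\theta_k)$. This yields $\phi'\in\Phi^\sigma$ with $\phi'(f)<0$.

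\textbf{Where your argument could stop.} Once you have $\E_\theta[\rho(f;(G_k,\theta))]\le-\varepsilon$, the same ``minimum is at most the average'' observation finishes at once. So the Prokhorov step and the support analysis you flag as the main obstacle are avoidable. Your separate case $s=0$ is also absorbed in the paper's framing: a negative limit of $\rho(\llbracket f\rrbracket;G_k)$ already forces $\rho(\llbracket\sigma\rrbracket;G_k)>0$ eventually.

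\textbf{What your route buys.} It proves something stronger. Along a subsequence, the uniformly random labellings converge to a probability measure $\mu$ on $\Phi^\sigma$ with $\E_\theta[\rho(f;(G_k,\theta))]\to\int\phi(f)\,d\mu(\phi)$ for every $f$. This is a local analogue of Razborov's extension measure, and it is reusable beyond this lemma.

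\textbf{A point of precision in your central step.} It is not enough that $\operatorname{supp}\mu$ lies in the closure of all realised density vectors. That closure also contains vectors realised at finite $k$, which are not limit functionals. What you need, and what your ``limit points of the supports'' phrasing correctly captures, is the portmanteau inequality $\mu(U)\le\liminf_k\mu_k(U)$ for open $U$. It guarantees that every neighbourhood of a support point is charged by $\mu_k$ for all large $k$. That is what makes your diagonal choice along $\Delta(G_k)\to\infty$ legitimate.
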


\begin{proof}
Suppose $f \in \SemCone^\sigma$ and $\llbracket f \rrbracket \notin
\SemCone^\emptyset$. Some $\phi \in \Phi^\emptyset$ has
$\phi(\llbracket f \rrbracket) < 0$. By Lemma~\ref{lem:limit-exists},
choose a $\Delta$-increasing sequence $(G_k)$ for which $\phi$ is the
limit functional. By Lemma~\ref{lem:asymp-averaging},
\[
\rho(\llbracket f \rrbracket; G_k)
= \rho(\llbracket \sigma \rrbracket; G_k)
\cdot \E_{\theta_k}\bigl[\rho(f;(G_k,\theta_k))\bigr]
\cdot (1 + o(1)).
\]
Locality of $\sigma$ gives
$\rho(\llbracket\sigma\rrbracket;G_k)$ bounded, so the expectation is
eventually negative. For large $k$ pick $\theta_k$ minimising
$\rho(f;(G_k,\theta_k))$. Apply Lemma~\ref{lem:limit-exists} to the
sequence $(G_k,\theta_k)$ to obtain $\phi' \in \Phi^\sigma$ with
$\phi'(f) < 0$, contradicting $f \in \SemCone^\sigma$.
\end{proof}

\subsection{The semidefinite method and weak duality}
\label{sec:sdp-method}

To bound $\phi(f)$ for a target $f \in \Lcl^\emptyset$, search for a
decomposition
\begin{equation}
\label{eq:sdp-decomp}
\lambda\emptyset - f
= \sum_i \alpha_i\,g_i + \sum_j \llbracket h_j^2 \rrbracket
\end{equation}
with $\alpha_i \geq 0$, $g_i \in \SemCone^\emptyset$ known, and
$h_j \in \Lcl^{\sigma_j}$ for local types $\sigma_j$. Any such
decomposition yields $\phi(f) \leq \lambda$ for every $\phi \in
\Phi^\emptyset$. A semidefinite-program solver finds the
decomposition; the verification is a finite arithmetic check on
integer or rational data.

Fix a size $n$ and an unlabelled flag basis
$\Glocsub{n}^\emptyset = \{F_1, \dots, F_m\}$. Write $f = \sum_i c_i
F_i$. Each Cauchy--Schwarz summand $\llbracket h^2 \rrbracket$ with
$h = \sum_p z_p\,\mathrm{basis}^\sigma_p$ expands as
\[
\llbracket h^2 \rrbracket
= \sum_{i=1}^m \trace\bigl(M_\sigma^{(i)} \cdot Y\bigr)\,F_i,
\qquad Y = z z^\top,
\]
where $M_\sigma^{(i)}$ is the integer structure matrix obtained from
the chain-rule expansion of
$\mathrm{basis}^\sigma_p \cdot \mathrm{basis}^\sigma_q$ followed by
unlabelling. The bound~\eqref{eq:sdp-decomp} is the primal of a
semidefinite program with positive semidefinite (PSD) variables $Y_\sigma$ and non-negative
scalars $\alpha_i$; the dual has decision variables $x_i \approx
\phi(F_i)$ and a single scalar normalisation $x_{i^\star} = 1$ for the
index $i^\star$ of the empty flag.

\begin{lemma}[weak duality]
\label{lem:weak-duality}
For any feasible primal $(Y_\sigma,\alpha,\lambda)$ and feasible dual
$x$,
\[
\lambda \geq \sum_i c_i\,x_i.
\]
In particular, if the primal SDP attains value $\lambda^\star$, then
$\phi(f) \leq \lambda^\star$ for every $\phi \in \Phi^\emptyset$.
\end{lemma}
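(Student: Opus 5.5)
The plan is the standard weak-duality computation: pair the primal identity with the dual vector coefficient by coefficient, then use the facts that every term on the right-hand side is non-negative under the dual. First we fix what feasibility means.

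\emph{Primal feasibility.} Coefficientwise in the basis $F_1,\dots,F_m$ of $\Glocsub{n}^\emptyset$, the primal identity reads
\[
\lambda\,[i=i^\star] - c_i = \sum_k \alpha_k\,(g_k)_i + \sum_\sigma \trace\bigl(M_\sigma^{(i)} Y_\sigma\bigr)
\]
for every $i$, with $\alpha_k\ge 0$ and $Y_\sigma\succeq 0$. Here the $g_k$ are expressed at size $n$, after lifting by the chain rule and the identity $\emptyset=\sum_i F_i$; this is the convention implicit in~\eqref{eq:sdp-decomp}.

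\emph{Dual feasibility.} A dual $x$ satisfies $x_{i^\star}=1$, $\sum_i (g_k)_i x_i\ge 0$ for every $k$, and $\sum_i x_i M_\sigma^{(i)}\succeq 0$ for every $\sigma$. These are exactly the constraints the dual of the stated primal imposes.

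\emph{The pairing.} Multiply the $i$-th identity by $x_i$ and sum over $i$. The left side becomes $\lambda x_{i^\star}-\sum_i c_ix_i = \lambda-\sum_i c_ix_i$. The right side is $\sum_k\alpha_k\sum_i (g_k)_ix_i + \sum_\sigma \trace\bigl((\sum_i x_iM_\sigma^{(i)})Y_\sigma\bigr)$. Each first term is a product of two non-negative numbers. Each trace is the trace of a product of two PSD matrices, hence non-negative: write $Y_\sigma=\sum z z^\top$, so the trace equals $\sum z^\top(\sum_i x_iM^{(i)}_\sigma)z\ge 0$. This gives $\lambda\ge\sum_i c_ix_i$.

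\emph{The ``in particular''.} Take $\phi\in\Phi^\emptyset$ and set $x_i:=\phi(F_i)$; I claim this $x$ is dual feasible.
\begin{itemize}
\item Normalisation: $x_{i^\star}=\phi(\emptyset)=1$ by Lemma~\ref{lem:limit-exists}.
\item The $g_k$ constraints: $\sum_i(g_k)_ix_i=\phi(g_k)\ge 0$ because $g_k\in\SemCone^\emptyset$. Linearity of $\phi$ justifies the size-$n$ rewriting, since $\phi$ is an algebra homomorphism (Theorem~\ref{thm:product-limit}).
\item The PSD constraints: for any vector $z$ put $h=\sum_p z_p\,\mathrm{basis}^\sigma_p$. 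Then $z^\top(\sum_ix_iM_\sigma^{(i)})z=\phi(\llbracket h^2\rrbracket)$, and this is $\ge 0$ by Lemma~\ref{lem:positivity}, using that $\sigma$ is a local type.
\end{itemize}
The first part then yields $\phi(f)=\sum_i c_i\phi(F_i)\le\lambda$, and taking the infimum over feasible primals gives $\phi(f)\le\lambda^\star$.

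The main obstacle is bookkeeping rather than ideas. One must check that $\phi$ applied to the expanded form $\sum_i\trace(M^{(i)}_\sigma zz^\top)F_i$ really equals $\phi(\llbracket h^2\rrbracket)$. This requires the size-$n$ expansion to be valid inside $\Lcl^\emptyset$, which in turn needs two facts. First, the products $\mathrm{basis}_p\cdot\mathrm{basis}_q$ stay local, by Corollary~\ref{cor:product-all-flags}. Second, unlabelling lands in local flags, since $\sigma$ is a local type. Dropping non-local $F_i$ from the basis is harmless because $\phi$ vanishes on them.
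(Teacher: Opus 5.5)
This is essentially the paper's own argument. You pair the coefficientwise primal identity with $x$, get non-negativity from $\alpha_k\ge 0$ and from the trace of a product of PSD matrices, then take $x_i=\phi(F_i)$ and read off dual feasibility from $\phi(\emptyset)=1$, $\phi(g_k)\ge 0$ and Lemma~\ref{lem:positivity}.

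One aside is wrong and should be removed. Local flag algebras have neither the identity $\emptyset=\sum_i F_i$ nor the classical chain-rule lifting $F=\sum_H p(F;H)\,H$, because local densities of a fixed size do not sum to $1$. For example, in the two-coloured pentagon class the only two local size-$2$ flags are the black pair and the black--red edge, and they have $\phi$-values $1$ and $2$. The paper's size-$n$ lifts come instead from the extension identities of Corollary~\ref{cor:unlabel-ext}. Those identities rely on regularity of the class and on positivity preservation, not merely on $\phi$ being a homomorphism, so your justification ``linearity of $\phi$ justifies the size-$n$ rewriting'' does not hold as stated. It also contradicts your own setup, since $\emptyset$ cannot be both a separate coordinate and equal to $\sum_i F_i$.

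Your proof does not need that aside. Keep $\emptyset$ as its own coordinate $i^\star$, as your $\lambda[i=i^\star]$ and $x_{i^\star}=1$ already do, and take each $g_k$ as given in those coordinates. Then $\sum_i (g_k)_i\,\phi(F_i)=\phi(g_k)$ is plain linearity and the argument goes through.
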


\begin{proof}
Multiply the primal equality $c_i = -\sum_\sigma \trace(M_\sigma^{(i)}
Y_\sigma) - \sum_j \alpha_j A_j^{(i)} + \lambda\,[F_i = \emptyset]$ by
$-x_i$ and sum over $i$:
\begin{align*}
-\sum_i c_i x_i
&= \sum_\sigma \trace\Bigl(\bigl(\textstyle\sum_i x_i M_\sigma^{(i)}\bigr) Y_\sigma\Bigr)
+ \sum_j \alpha_j \bigl(\textstyle\sum_i x_i A_j^{(i)}\bigr) - \lambda.
\end{align*}
The trace term is non-negative because $\sum_i x_i M_\sigma^{(i)}
\succeq 0$ (dual feasibility) and $Y_\sigma \succeq 0$ (primal
feasibility) and the trace of a product of PSD matrices is
non-negative. The $\alpha$ term is non-negative by primal and dual
feasibility. The $-\lambda$ term uses $x_{i^\star} = 1$. Rearranging
gives the bound.

For the limit-functional consequence, set $x_i = \phi(F_i)$. Dual
feasibility of $x = \phi$ encodes
$\phi(\llbracket h^2 \rrbracket) \geq 0$ (which is
Lemma~\ref{lem:positivity}) together with $\phi(g_j) \geq 0$ and
$\phi(\emptyset) = 1$.
\end{proof}

\subsection{Regular classes and extension constraints}
\label{sec:extension-trick}

When $\Gcl$ consists of regular graphs the framework acquires a generic
family of cone elements. For a type $\sigma$ of size $k$ and an index
$i \in [k]$, define the \emph{extension} $\ext_i^\sigma$ to be the
sum of all $\sigma$-flags $F \in \HeredG^\sigma$ of size $k+1$ with an
edge between the unique unlabelled vertex and the vertex labelled $i$.

By Lemma~\ref{lem:component-local} each summand is a local
$\sigma$-flag, so $\ext_i^\sigma \in \Lcl^\sigma$.

\begin{lemma}
\label{lem:ext-eq-one}
If $\Gcl$ consists of regular graphs, then $\phi(\ext_i^\sigma) = 1$
for every $\phi \in \Phi^\sigma$ and $i \in [|\sigma|]$.
\end{lemma}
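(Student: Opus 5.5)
The plan is to compute $\rho(\ext_i^\sigma;(G,\eta))$ exactly for each fixed $(G,\eta) \in \Gcl^\sigma$ and then pass to the limit. Set $k = |\sigma|$. Since every summand of $\ext_i^\sigma$ has size $k+1$, all of them share the denominator $\binom{\Delta(G)}{1} = \Delta(G)$. Hence
\[
\rho(\ext_i^\sigma;(G,\eta)) = \frac{1}{\Delta(G)}\sum_{F} c(F;(G,\eta)),
\]
where the sum runs over the isomorphism classes of $\sigma$-flags $F$ of size $k+1$ whose unlabelled vertex is adjacent to label $i$.

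The first step is to identify this sum of counts. Each vertex $u \in V(G)\setminus\im\eta$ gives the set $U = \im\eta \cup \{u\}$, and $G[U]$ is a $\sigma$-flag of size $k+1$ lying in $\HeredG^\sigma$. It belongs to exactly one summand class, or to none, according to whether $u \sim \eta(i)$ or not. Conversely, each set $U$ counted in some $c(F;(G,\eta))$ has this form. The total is therefore the number of neighbours of $\eta(i)$ outside $\im\eta$, namely $\deg_G(\eta(i)) - |N(\eta(i)) \cap \im\eta|$. This is where the precise convention matters. The summands are taken in $\HeredG^\sigma$ rather than in the local flags, so no class is lost, and Lemma~\ref{lem:component-local} confirms that all summands are local anyway.

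The second step uses regularity: $\deg_G(\eta(i)) = \Delta(G)$. This gives
\[
\rho(\ext_i^\sigma;(G,\eta)) = 1 - \frac{|N(\eta(i))\cap\im\eta|}{\Delta(G)} = 1 + O(1/\Delta(G)),
\]
with error at most $(k-1)/\Delta(G)$, uniformly in $G$.

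Finally, every $\phi \in \Phi^\sigma$ arises, by Lemma~\ref{lem:limit-exists} and the definition of limit functionals, as $\phi(F) = \lim_m \rho(F;G_m)$ along a $\Delta$-increasing sequence. Because $\ext_i^\sigma$ is a finite sum of local flags, linearity gives $\phi(\ext_i^\sigma) = \lim_m \rho(\ext_i^\sigma; G_m) = 1$.

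The main obstacle is minor bookkeeping: checking that summing induced counts over isomorphism classes counts each neighbour $u$ exactly once. This holds because distinct $u$ give distinct sets $U$, and each $U$ has a unique $\sigma$-flag isomorphism class.
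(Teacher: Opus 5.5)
Your proposal is correct and follows essentially the same route as the paper: you identify the total count over the summands of $\ext_i^\sigma$ with the number of neighbours of $\eta(i)$ outside $\im\eta$, which by regularity is $\Delta(G) - |N(\eta(i))\cap\im\eta|$. You then divide by $\binom{\Delta(G)}{1}$ and pass to the limit along the defining $\Delta$-increasing sequence, where your exact identity with uniform error at most $(|\sigma|-1)/\Delta(G)$ simply makes the paper's $1 - o(1)$ explicit.
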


\begin{proof}
For $(G,\eta) \in \Gcl^\sigma$, summing the count over all $F$ in the
support of $\ext_i^\sigma$ counts the choices of an unlabelled vertex
$u$ adjacent to $\eta(i)$ in $V(G)\setminus\im\eta$, which is
$\deg(\eta(i)) - |\{j : \eta(j) \in N(\eta(i))\}| = \Delta(G) - O(1)$.
Dividing by $\binom{\Delta(G)}{1} = \Delta(G)$ gives $1 - o(1)$.
\end{proof}

\begin{corollary}
\label{cor:ext-diff}
For every type $\sigma$, $i, j \in [|\sigma|]$, $f \in \Lcl^\sigma$,
\[
\phi(\ext_i^\sigma - \ext_j^\sigma) = 0,
\qquad
\phi(f \cdot \ext_i^\sigma) = \phi(f).
\]
In particular, $\ext_i^\sigma - \ext_j^\sigma$,
$f \cdot \ext_i^\sigma - f$, and $f - f \cdot \ext_i^\sigma$ all lie
in $\SemCone^\sigma$.
\end{corollary}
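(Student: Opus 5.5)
The first identity follows from Lemma~\ref{lem:ext-eq-one} together with linearity of $\phi$. Every $\phi \in \Phi^\sigma$ satisfies $\phi(\ext_i^\sigma) = 1 = \phi(\ext_j^\sigma)$, so subtracting gives $\phi(\ext_i^\sigma - \ext_j^\sigma) = 0$.

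For the second identity, we first check that the product makes sense in the algebra. Each summand of $\ext_i^\sigma$ is a local $\sigma$-flag by Lemma~\ref{lem:component-local}, so $\ext_i^\sigma \in \Lcl^\sigma$. Since $\Lcl^\sigma$ is an algebra (Definition~\ref{def:product}, Lemma~\ref{lem:local-assoc}), the product $f \cdot \ext_i^\sigma$ lies in $\Lcl^\sigma$ for any $f \in \Lcl^\sigma$. Theorem~\ref{thm:product-limit} says every $\phi \in \Phi^\sigma$ is an algebra homomorphism on $\Lcl^\sigma$. Hence
\[
\phi(f \cdot \ext_i^\sigma) = \phi(f)\,\phi(\ext_i^\sigma) = \phi(f).
\]

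The membership statements are then immediate from the definition of $\SemCone^\sigma$. An element $g$ lies in $\SemCone^\sigma$ exactly when $\phi(g) \geq 0$ for every $\phi$. The three elements $\ext_i^\sigma - \ext_j^\sigma$, $f \cdot \ext_i^\sigma - f$ and $f - f\cdot\ext_i^\sigma$ all have $\phi$-value exactly $0$ for every $\phi \in \Phi^\sigma$. In particular these values are nonnegative, so all three lie in $\SemCone^\sigma$.

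The one point that needs care is the scope of Lemma~\ref{lem:ext-eq-one}. It applies to functionals arising from $\Delta$-increasing sequences, which is exactly how $\Phi^\sigma$ is defined. Its $O(1)$ correction (labelled neighbours of $\eta(i)$) is divided by $\Delta(G)$ and therefore vanishes in the limit. Beyond confirming this, there is no real obstacle.
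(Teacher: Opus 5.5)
Your proof is correct and is essentially the argument the paper intends. The paper states this corollary without a separate proof, as an immediate consequence of Lemma~\ref{lem:ext-eq-one} (giving $\phi(\ext_i^\sigma)=1$ under the standing regularity hypothesis) together with linearity and the homomorphism property from Theorem~\ref{thm:product-limit}; membership in $\SemCone^\sigma$ then follows directly from the definition.
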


\begin{corollary}[unlabelled extension]
\label{cor:unlabel-ext}
If $\sigma$ is a local type, then for $\phi \in \Phi^\emptyset$,
$f \in \Lcl^\sigma$, and $i, j \in [|\sigma|]$,
\[
\phi\bigl(\llbracket \ext_i^\sigma - \ext_j^\sigma \rrbracket\bigr) = 0,
\qquad
\phi\bigl(\llbracket f \cdot \ext_i^\sigma \rrbracket\bigr)
= \phi\bigl(\llbracket f \rrbracket\bigr).
\]
\end{corollary}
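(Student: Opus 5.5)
We derive Corollary~\ref{cor:unlabel-ext} from Corollary~\ref{cor:ext-diff} by passing through the averaging operator, using the positivity preservation of Lemma~\ref{lem:positivity}.

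\emph{Well-definedness.} Each summand of $\ext_i^\sigma$ is a local $\sigma$-flag by Lemma~\ref{lem:component-local}. Products of local flags stay local by Theorem~\ref{thm:product-in-loc}, so $\ext_i^\sigma - \ext_j^\sigma$ and $f\cdot\ext_i^\sigma - f$ both lie in $\Lcl^\sigma$. Since $\sigma$ is a local type, $\llbracket\,\cdot\,\rrbracket$ maps $\Lcl^\sigma$ into $\Lcl^\emptyset$. Hence every expression in the statement is a legitimate element on which $\phi\in\Phi^\emptyset$ can be evaluated.

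\emph{Two-sided positivity.} Corollary~\ref{cor:ext-diff} places both $g$ and $-g$ in $\SemCone^\sigma$ for
\[
g := \ext_i^\sigma-\ext_j^\sigma
\quad\text{and for}\quad
g := f\cdot\ext_i^\sigma - f .
\]
For the first choice this holds because $\ext_j^\sigma-\ext_i^\sigma$ is covered by swapping $i$ and $j$. For the second it is stated explicitly.

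Lemma~\ref{lem:positivity} then gives $\llbracket g\rrbracket\in\SemCone^\emptyset$ and $\llbracket -g\rrbracket\in\SemCone^\emptyset$. Since averaging is linear, $\llbracket -g\rrbracket = -\llbracket g\rrbracket$. Therefore $\phi(\llbracket g\rrbracket)\ge 0$ and $-\phi(\llbracket g\rrbracket)\ge 0$ for every $\phi\in\Phi^\emptyset$, so $\phi(\llbracket g\rrbracket)=0$.

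\emph{Conclusion.} For the first choice of $g$ this is exactly the first identity. For the second choice, linearity of $\llbracket\,\cdot\,\rrbracket$ and of $\phi$ gives
\[
\phi\bigl(\llbracket f\cdot\ext_i^\sigma\rrbracket\bigr)=\phi\bigl(\llbracket f\rrbracket\bigr).
\]

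\emph{Main obstacle.} The only delicate step is the application of Lemma~\ref{lem:positivity}, which needs $\sigma$ to be a local type. Without that hypothesis the averaged elements might not lie in $\Lcl^\emptyset$, and the asymptotic averaging of Lemma~\ref{lem:asymp-averaging} would have no bounded factor $\rho(\llbracket\sigma\rrbracket;G)$. This is precisely why the hypothesis appears in the statement, so no further work is needed there.

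Note also that linearity of $\llbracket\,\cdot\,\rrbracket$ on $\Lcl^\sigma$ must be compatible with the chain-rule quotient. This follows from the classical fact that averaging is well defined on the quotient, applied to the $\HeredG$-sums as in Corollary~\ref{cor:product-all-flags}.
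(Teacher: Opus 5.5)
Your proof is correct and follows the paper's own argument: Corollary~\ref{cor:ext-diff} puts both $g$ and $-g$ in $\SemCone^\sigma$, Lemma~\ref{lem:positivity} carries both into $\SemCone^\emptyset$, and hence $\phi(\llbracket g\rrbracket)=0$. Your closing concern about a chain-rule quotient is unnecessary. The paper defines $\Lcl^\sigma$ directly on $\R\Gloc^\sigma$ with no quotient, and the classical relations $F=\sum_{F'}p(F;F')F'$ would in fact fail for local densities, so linearity of $\llbracket\,\cdot\,\rrbracket$ follows immediately from its definition on basis flags.
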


\begin{proof}
By Corollary~\ref{cor:ext-diff} both $\ext_i^\sigma - \ext_j^\sigma$
and its negative lie in $\SemCone^\sigma$. Lemma~\ref{lem:positivity}
applied to each gives that $\llbracket\ext_i^\sigma -
\ext_j^\sigma\rrbracket$ and its negative lie in $\SemCone^\emptyset$,
hence both are zero on $\phi$. The second identity follows from
$f - f \cdot \ext_i^\sigma \in \SemCone^\sigma$ and its negative.
\end{proof}

The extension relations are doubly useful: they generate a family of
cone elements at no further cost, and they let us lift an element from
$\Lcl^\sigma$ to a span over larger flags without changing its
$\phi$-value.

\section{A simple pentagon bound}
\label{sec:simple}

Sections~\ref{sec:simple}--\ref{sec:clebsch} illustrate the framework of
Section~\ref{sec:localflags} on the bounded-degree pentagon problem,
beginning with the simpler of the two bounds. We prove
Theorem~\ref{thm:simple} in three moves: a reduction of
$P(G,v)$ (the pentagons through a single vertex) to a
black-red-red-black 4-path count in a 2-coloured auxiliary class, an
expression of that count as $\phi(O)$ for a size-$5$ objective $O$ in
the local flag algebra, and a size-$5$ SDP bound $\phi(O) \leq 1/4$.

\subsection{Reductions}

\begin{lemma}[asymptotic suffices]
\label{lem:asymp-suffices}
If $P(G) \lesssim \lambda |G| \Delta(G)^4$ as $\Delta(G) \to \infty$
over triangle-free $G$, then $P(G) \leq \lambda |G|\Delta(G)^4$ for
every triangle-free $G$.
\end{lemma}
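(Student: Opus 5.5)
The plan is a tensor-power (blowup) amplification. Suppose, for contradiction, that some triangle-free $G_0$ has $P(G_0) > \lambda |G_0|\Delta(G_0)^4$. Write $\varepsilon>0$ for the relative excess, so that $P(G_0) = (1+\varepsilon)\lambda|G_0|\Delta(G_0)^4$. We cannot simply use the balanced $k$-blowup $G_0[k]$. It has $|G_0[k]| = k|G_0|$ and $\Delta(G_0[k]) = k\Delta(G_0)$, but its pentagon count is not $k^5 P(G_0)$ on the nose, because of degenerate copies. What scales cleanly is the ratio $P/(|G|\Delta^4)$, and only in the limit. Instead we use a construction that preserves triangle-freeness, multiplies $|G|$ by $N$ and $\Delta$ by $m$, and multiplies $P$ by at least $N m^4$.

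The cleanest choice is the lexicographic-free \emph{tensor product with a bipartite-free structure}. We take $G_k := G_0 \times K$, the categorical (tensor) product of $G_0$ with a suitable graph $K$. Its vertex set is $V(G_0)\times V(K)$, with $(u,a)\sim(v,b)$ iff $uv\in E(G_0)$ and $ab\in E(K)$. The projection to $G_0$ is a homomorphism, so any triangle in $G_0\times K$ would project to a closed walk of length $3$ in $G_0$. That walk must be a triangle, since $G_0$ is loopless. Hence $G_0\times K$ is triangle-free.

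We choose $K$ to be the complete graph with loops, $K = K_k^{\circ}$. Then $G_0\times K_k^\circ = G_0[k]$ is exactly the blowup in which each vertex becomes an independent set of size $k$. This has $|G_k| = k|G_0|$ and $\Delta(G_k) = k\Delta(G_0)$. Moreover, each pentagon $v_1\dots v_5$ of $G_0$ lifts to exactly $k^5$ induced pentagons: choose one copy of each $v_i$. Non-adjacent pairs remain non-adjacent, so these lifts are induced. Distinct choices give distinct vertex sets, hence distinct pentagons. So $P(G_k) \ge k^5 P(G_0)$; extra pentagons using two copies of the same vertex only help.

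Thus
\[
\frac{P(G_k)}{|G_k|\Delta(G_k)^4} \ge \frac{k^5 P(G_0)}{k|G_0|\cdot k^4\Delta(G_0)^4} = (1+\varepsilon)\lambda
\]
for every $k$, while $\Delta(G_k)\to\infty$ (assuming $\Delta(G_0)\ge1$; if $\Delta(G_0)=0$ then $P(G_0)=0$ and there is nothing to prove). This contradicts the hypothesis $P(G)\lesssim\lambda|G|\Delta(G)^4$, read as $\limsup P(G)/(|G|\Delta(G)^4)\le\lambda$ along $\Delta(G)\to\infty$.

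The only step needing care is the lift count. We must check that each selection of representatives really induces a $C_5$ and not a $C_5$ with chords. This holds because adjacency in $G_0[k]$ depends only on the projections, which are distinct here. We must also check that distinct selections give distinct unordered pentagons. Both are routine, so there is no real obstacle beyond fixing the precise meaning of $\lesssim$ as the $\limsup$ statement above.
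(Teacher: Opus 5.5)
Your proof is correct and is essentially the paper's argument: the paper iterates the $2$-fold blowup (so $G_i \cong G_0[2^i]$), while you take the $k$-blowup $G_0[k]$ directly. In both cases triangle-freeness is preserved, $|G|$ and $\Delta(G)$ scale by the blowup factor, and each pentagon lifts to at least $k^5$ pentagons, so the ratio stays above $\lambda$ while $\Delta \to \infty$. Your opening caveat about degenerate copies is unfounded: two vertices of one supernode are twins, which an induced $C_5$ cannot contain, so $P(G_0[k]) = k^5 P(G_0)$ exactly (cf.\ Lemma~\ref{lem:blowup-pentagon}). Since you only use the lower bound, this does no harm.
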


\begin{proof}
Suppose $G_0$ violates the bound with
$\rho_0 := P(G_0)/(|G_0|\Delta(G_0)^4) > \lambda$. Construct $G_{i+1}$
by replacing each vertex $v$ with two copies $v_0, v_1$ and joining
$\{u_a, v_b\}$ for every $a, b \in \{0,1\}$ whenever $uv \in E(G_i)$.
Then $|G_{i+1}| = 2|G_i|$ and $\Delta(G_{i+1}) = 2\Delta(G_i)$.
The construction preserves triangle-freeness: a triangle in $G_{i+1}$
projects to one in $G_i$. Each pentagon in $G_i$ lifts to $2^5$
pentagons in $G_{i+1}$,
so $P(G_{i+1}) \geq 2^5 P(G_i)$ and
$P(G_i)/(|G_i|\Delta(G_i)^4) \geq \rho_0$ for every $i$, contradicting
the asymptotic bound along the sequence $(G_i)$.
\end{proof}

\begin{lemma}[regular suffices]
\label{lem:regular-suffices}
For every triangle-free $G$ there exists a triangle-free regular $G'$
with $\Delta(G') = \Delta(G)$ and
$P(G')/(|G'|\Delta(G')^4) \geq P(G)/(|G|\Delta(G)^4)$.
\end{lemma}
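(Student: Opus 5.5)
We construct $G'$ as a disjoint union of $\Delta := \Delta(G)$ copies of $G$, with extra vertices and edges that raise every vertex to degree $\Delta$ while creating no triangles and no new induced pentagons that we need to count. Since we only need $P(G')$ to be at least the number of pentagons contributed by the copies of $G$, we never have to control new pentagons; they only help. The real constraint is $|G'|$: adding vertices to $G'$ lowers the ratio, so the completion must add no vertices.

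The standard route is a bipartite double-cover style padding. Take two disjoint copies $G_0, G_1$ of $G$. For each vertex $v$, write $d(v) := \Delta - \deg_G(v)$ for its deficiency. Join $v_0$ to $v_1$? That would create triangles only if $v$ had a neighbour adjacent to both copies, which fails since $v_0$'s neighbours lie in $G_0$ and $v_1$'s in $G_1$. An edge $v_0v_1$ together with $u_0 \in N(v_0)$ would need $u_0v_1$ to close a triangle, but $u_0v_1$ is never an edge. However, a single copy-edge adds only one to each degree. So I would instead use $\Delta$ copies $G_1,\dots,G_\Delta$ and add edges only \emph{between} copies, of the form $v_iv_j$ for the same base vertex $v$. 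Then any triangle would project, after identifying copies, to a closed walk of length three in $G$ with some steps staying at the same vertex. Since all added edges connect copies of the same vertex and there are no original edges between copies, a triangle must use only added edges, i.e.\ it lies inside the fibre $\{v_1,\dots,v_\Delta\}$. So the fibre graphs must be triangle-free.

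Concretely, on each fibre of $v$ (which has $\Delta$ vertices) I place a triangle-free $d(v)$-regular graph, e.g.\ a circulant on $\mathbb{Z}_\Delta$ with connection set chosen from odd differences when $\Delta$ is even (bipartite), handling parity and the case $d(v) > \Delta/2$ separately. This is the main obstacle: a triangle-free $d$-regular graph on $\Delta$ vertices exists only when $d \le \Delta/2$, so vertices of deficiency above $\Delta/2$ cannot be padded within a fibre of size $\Delta$. The fix I would try first is to use $2\Delta$ copies and put the bipartite graph $K_{\Delta,\Delta}$-minus-a-$(\Delta-d)$-factor between the two halves of each fibre. A $d$-regular bipartite graph on $\Delta+\Delta$ vertices exists for every $0 \le d \le \Delta$ (e.g.\ the circulant bipartite graph $x_i \sim y_{i+t}$ for $t \in \{0,\dots,d-1\}$ modulo $\Delta$), and bipartite graphs are triangle-free.

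Finally I verify the count. We have $|G'| = 2\Delta|G|$, every vertex has degree exactly $\deg_G(v) + d(v) = \Delta$, so $\Delta(G') = \Delta$. Each copy of $G$ is an induced subgraph of $G'$, because no added edge joins two distinct vertices within a copy. So each of its $P(G)$ induced pentagons remains induced in $G'$, giving $P(G') \ge 2\Delta\,P(G)$. Dividing by $|G'|\Delta^4$ yields the claim. If $\Delta(G)=0$ the statement is vacuous and we take $G' = G$.
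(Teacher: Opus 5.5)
Your proof is correct, but it takes a different route from the paper's.

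\textbf{The paper's route.} The paper iterates a doubling step. $G_{i+1}$ is two disjoint copies of $G_i$ plus a matching joining the two copies of each vertex still below degree $\Delta$. Each step raises every deficient degree by exactly one, and the matching plainly closes no triangle. Since $|G_{i+1}| = 2|G_i|$ and $P(G_{i+1}) \ge 2P(G_i)$, the graph is regular after $\Delta-\delta$ steps.

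\textbf{Your route.} You do it in one shot: $2\Delta$ copies, with a $d(v)$-regular bipartite circulant on each fibre. Your fibre-projection argument correctly reduces triangle-freeness to bipartiteness of the fibre graphs: a triangle meeting two copies must use two cross edges at a common base vertex, so it lies inside a fibre. The paper's construction is in fact a special case of yours. Unwinding the iteration, the fibre of $v$ has $2^{\Delta-\delta}$ vertices and carries $2^{\Delta-\delta-d(v)}$ disjoint hypercubes $Q_{d(v)}$, which is $d(v)$-regular and bipartite.

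\textbf{Comparison.} The paper's version buys a trivial per-step check, since only a matching on fibres of size two is needed. Yours buys a linear rather than exponential blow-up ($2\Delta|G|$ versus $2^{\Delta-\delta}|G|$ vertices) and a single explicit graph. Since the lemma concerns only the ratio, either suffices.

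\textbf{Write-up.} Cut the exploratory opening paragraphs; in particular, the mention of ``extra vertices'' contradicts your final construction, which adds none. For $\Delta(G)=0$ the ratio is undefined rather than the statement being vacuous, though taking $G'=G$ is harmless.
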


\begin{proof}
Iteratively form $G_{i+1}$ as two disjoint copies of $G_i$ together
with an edge between the two copies of every vertex $v$ satisfying
$\deg_{G_i}(v) < \Delta(G_i)$. Adding edges only at non-maximum-degree
vertices preserves $\Delta(G_{i+1}) = \Delta(G_i)$. Each vertex gains at most one cross-copy neighbour (its own image), so
the added edges form a matching between the copies and create no
triangle. From $|G_{i+1}| = 2|G_i|$ and $P(G_{i+1}) \geq 2 P(G_i)$
the ratio is non-decreasing. After at most
$\Delta(G_0) - \delta(G_0) \leq \Delta(G_0)$ iterations the minimum
degree reaches $\Delta(G_0)$.
\end{proof}

\begin{lemma}[per-vertex count suffices]
\label{lem:local-count}
Let $P(G,v) := |\{C_5 \subseteq G : v \in V(C_5)\}|$. If
$P(G,v)/\Delta(G)^4 \lesssim \lambda$ as $\Delta(G) \to \infty$, then
$P(G)/(|G|\Delta(G)^4) \lesssim \lambda/5$.
\end{lemma}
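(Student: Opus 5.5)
The plan is to prove Lemma~\ref{lem:local-count} by double counting the pairs (pentagon, vertex on it), followed by a uniform averaging argument. Each pentagon $C$ of $G$ contains exactly five vertices, so
\[
\sum_{v \in V(G)} P(G,v) = 5\,P(G).
\]
This identity is exact and holds for every graph. Dividing by $5|G|\Delta(G)^4$ gives
\[
\frac{P(G)}{|G|\,\Delta(G)^4}
= \frac{1}{5}\cdot\frac{1}{|G|}\sum_{v} \frac{P(G,v)}{\Delta(G)^4}
\leq \frac{1}{5}\max_{v} \frac{P(G,v)}{\Delta(G)^4}.
\]
So the per-vertex bound passes to the global bound up to the factor $1/5$.

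The next step is to make the asymptotic quantifiers precise. I read the hypothesis as follows: for every $\varepsilon>0$ there is $D$ such that every triangle-free $G$ with $\Delta(G)\ge D$ satisfies $P(G,v)\le(\lambda+\varepsilon)\Delta(G)^4$ for all $v\in V(G)$. Equivalently, $\limsup_{\Delta(G)\to\infty}\max_v P(G,v)/\Delta(G)^4\le\lambda$. Under this reading, the displayed inequality immediately gives $P(G)\le\frac{\lambda+\varepsilon}{5}|G|\Delta(G)^4$ for all such $G$. This is exactly $P(G)/(|G|\Delta(G)^4)\lesssim\lambda/5$.

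The main point needing care is this interpretation. If the hypothesis were only about a typical vertex, I would instead argue through averages. However, the averaging step needs only the mean of $P(G,v)/\Delta(G)^4$ over $v$, which is dominated by the maximum. So the uniform reading suffices, and it is the form the later SDP arguments deliver. In the flag-algebra formulation, the hypothesis comes from bounding $\phi(O)$ over all limit functionals along rooted sequences $(G_k,v_k)$. In that setting, uniformity over the root follows by contradiction. Suppose there were a sequence of graphs with roots $v_k$ exceeding $\lambda+\varepsilon$. Passing to a convergent subsequence via Lemma~\ref{lem:limit-exists} would then yield a functional violating the bound.

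Finally, the result combines with Lemmas~\ref{lem:asymp-suffices} and~\ref{lem:regular-suffices} to give unconditional bounds. No other ingredient is needed, since the counting identity is exact.
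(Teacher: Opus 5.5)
Your proof is correct and follows the paper's argument: the double count $\sum_v P(G,v) = 5P(G)$, then the per-vertex bound applied termwise. Your explicit reading of the hypothesis as uniform in the root $v$ (a bound on the average over $v$ would also do) spells out what the paper's ``apply the asymptotic bound termwise'' leaves implicit.
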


\begin{proof}
Each pentagon contains five vertices and is counted at five of them,
so $\sum_{v} P(G,v) = 5 P(G)$. Apply the asymptotic bound termwise.
\end{proof}

\subsection{A reduction to a coloured 4-path count}

Let $\Gcl$ denote the class of $\{R,B\}$-vertex-coloured graphs that
are triangle-free, regular, in which the black set is independent and
of size exactly $\Delta(G)$. The hereditary closure $\HeredG$ drops the
regularity requirement.

\begin{lemma}[black-red-red-black suffices]
\label{lem:brrb-suffices}
Let $\brrb$ denote the black-red-red-black 4-path, viewed as an
$\emptyset$-flag of size $4$ in $\Gcl$. If
$c(\brrb;G)/\Delta(G)^4 \lesssim \lambda$ as $\Delta(G) \to \infty$
over $\Gcl$, then $P(H,v)/\Delta(H)^4 \lesssim \lambda$ over regular
triangle-free $H$.
\end{lemma}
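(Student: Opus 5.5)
Given a regular triangle-free $H$ and a vertex $v$, I will build an auxiliary coloured graph $G_v \in \Gcl$ whose black-red-red-black 4-paths correspond to pentagons through $v$. The natural choice is to delete $v$, colour $N_H(v)$ black and every other remaining vertex red, and keep the induced graph on $V(H)\setminus\{v\}$. Since $H$ is triangle-free, $N_H(v)$ is independent, and by regularity it has size exactly $\Delta(H)$. So the black set is independent of the required size.

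A pentagon $v,a,x,y,b$ through $v$ has $a,b \in N_H(v)$ and $x,y \notin N_H(v)\cup\{v\}$. The only alternative would be $x\in N(v)$, which would create the triangle $vax$. The pentagon is induced because $H$ is triangle-free, so it has no chords. Hence $a\,x\,y\,b$ is an induced black-red-red-black path in $G_v$. Conversely, every such induced path closes up with $v$ into a 5-cycle. This 5-cycle is induced: $a,b$ are non-adjacent (the black set is independent), $x,y$ are non-adjacent to $v$ since they are red, and $a y$, $b x$ are non-edges by inducedness of the path. Each pentagon gives exactly one unordered path, so $P(H,v) = c(\brrb;G_v)$.

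The difficulty is that $G_v$ is not regular. The black vertices lost the edge to $v$ and now have degree $\Delta-1$, while the red vertices keep degree $\Delta$. I will repair this with the same doubling trick as in Lemma~\ref{lem:regular-suffices}. Take two disjoint copies of $G_v$ and add a matching edge between the two copies of each deficient vertex. A black vertex would then receive a black neighbour, which breaks independence of the black set. So instead I will take two copies and join each black vertex of copy one to the corresponding black vertex of copy two only after recolouring. That gets awkward, and the cleaner fix is to \emph{recolour the second copy's black vertices red}. The result keeps exactly $\Delta$ black vertices, stays triangle-free (the added edges form a matching), and is $\Delta$-regular with $\Delta(G)=\Delta(H)$.

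The step I expect to be the main obstacle is checking that this recolouring creates no spurious extra BRRB paths that would hurt the inequality. That concern is only cosmetic here, because extra paths only increase the count: $c(\brrb;G) \ge c(\brrb;G_v) = P(H,v)$, and this is the direction we need.

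To conclude, suppose $P(H_k,v_k)/\Delta(H_k)^4 > \lambda+\varepsilon$ along a sequence with $\Delta\to\infty$. The construction then yields $G_k\in\Gcl$ with $\Delta(G_k)=\Delta(H_k)\to\infty$ and $c(\brrb;G_k)/\Delta(G_k)^4 > \lambda+\varepsilon$, which contradicts the hypothesis.
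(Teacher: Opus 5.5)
Your proof is correct and takes the paper's approach: delete $v$, colour $N(v)$ black and everything else red, and identify pentagons through $v$ with induced black--red--red--black paths. As you show, this identification is exact, so the paper's $O(\Delta(H)^3)$ boundary correction is not needed. Where the paper only asserts that restoring regularity at the black vertices changes the path count by $O(\Delta(H)^3)$, your two-copy gadget makes that step explicit and lossless: the second copy is recoloured entirely red, and matching edges join each $a \in N(v)$ to its twin, so the black set stays independent of size exactly $\Delta(H)$ and the original coloured graph survives as an induced subgraph.
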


\begin{proof}
Let $H$ be regular triangle-free and $v \in V(H)$. Any pentagon
through $v$ uses $v$, two vertices of $N(v)$, and two vertices of
$V(H)\setminus(N(v)\cup\{v\})$, in that cyclic order. Colour $H$ by
declaring $N(v)$ black and the remaining vertices (other than $v$)
red, and delete $v$ from $H$. The resulting coloured graph $H'$ is
triangle-free with $N(v)$ an independent black set of size $\Delta(H)$;
its red vertices have degree $\Delta(H)$ and its black vertices degree
$\Delta(H) - 1$, so $H' \in \HeredG$ with $\Delta(H') = \Delta(H)$.
Restoring regularity at the black vertices changes the $4$-path count
by $O(\Delta(H)^3)$, so we may take $H' \in \Gcl$. The pentagons
through $v$ then correspond to black-red-red-black $4$-paths in $H'$,
up to an $O(\Delta(H)^3)$ boundary correction for pentagons that re-use
vertices at distance $1$ from $v$:
$P(H,v) = c(\brrb;H') + O(\Delta(H)^3)$.
\end{proof}

The next lemma extends Lemma~\ref{lem:component-local} to admit black
vertices as anchors.

\begin{lemma}
\label{lem:pentagon-local}
A $\sigma$-flag $F \in \HeredG^\sigma$ is a local $\sigma$-flag if and
only if every connected component of $F$ contains a labelled vertex or
a black vertex.
\end{lemma}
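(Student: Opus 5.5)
The plan is to prove the two directions separately. The ``if'' direction follows the greedy-embedding argument of Lemma~\ref{lem:component-local}, with black vertices as a second kind of root. The ``only if'' direction builds, for a fixed $\Delta$, graphs in $\Gcl$ containing unboundedly many copies of $F$.

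\emph{If.} Suppose every component of $F$ contains a labelled vertex or a black vertex, and let $(G,\eta) \in \Gcl^\sigma$. We count embeddings of the unlabelled vertices of $F$ component by component.
\begin{itemize}
\item In a component containing a labelled vertex, process the unlabelled vertices in order of increasing distance from the labelled set, exactly as in Lemma~\ref{lem:component-local}. Each new vertex is adjacent to an already-embedded one, so it has at most $\Delta(G)$ choices.
\item In a component with no labelled vertex, fix a black vertex $b$ of it. In every $G \in \Gcl$ the black set has size exactly $\Delta(G)$, and an embedding must send $b$ to a black vertex, so $b$ has at most $\Delta(G)$ choices. The remaining vertices of that component are then embedded by breadth-first order from $b$, again with at most $\Delta(G)$ choices each.
\end{itemize}
Altogether $c(F;G) \le \Delta(G)^{|F|-|\sigma|}$. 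Since $\binom{\Delta}{m} = \Theta(\Delta^m)$ for large $\Delta$, and the finitely many small values of $\Delta$ contribute a bounded amount, $\rho(F;\cdot)$ is bounded.

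For condition~(ii) of Definition~\ref{def:local-flag}, note that a label extension $F^v$ has the same components as $F$, each with at least as many anchors. So $F^v$ satisfies the hypothesis for its extended type, and induction on the number of unlabelled vertices gives locality.

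\emph{Only if.} Suppose some component $C$ of $F$ contains no labelled vertex and no black vertex; then all of its vertices are red. Since $F \in \HeredG^\sigma$, there is some $(G_0,\eta_0) \in \Gcl^\sigma$ containing $F$ as an induced $\sigma$-flag. Put $\Delta_0 = \Delta(G_0)$. Let $R$ be $G_0$ recoloured entirely red; it is still triangle-free and $\Delta_0$-regular, and it contains $C$ as an induced subgraph. For $N \ge 1$ set
\[
G_N = G_0 \sqcup N\cdot R,
\]
keeping the type embedding inside the copy of $G_0$. Then:
\begin{itemize}
\item $G_N$ is triangle-free and $\Delta_0$-regular.
\item Its black set is the original black set of $G_0$, which is independent and of size $\Delta_0 = \Delta(G_N)$, so $G_N \in \Gcl$.
\end{itemize}
Fix one induced copy of $F$ in $G_0$ and replace its copy of $C$ by a copy of $C$ inside any one of the $N$ disjoint copies of $R$. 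There are no edges between distinct copies, and $C$ is a whole component of $F$, so the resulting vertex set still induces a copy of $F$ with the same labelling. Distinct choices of the copy of $R$ give distinct vertex sets. Hence $c(F;G_N) \ge N$ while $\Delta(G_N)=\Delta_0$ stays fixed, so $\rho(F;G_N) \to \infty$. Therefore condition~(i) already fails and $F$ is not local.

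The main point needing care is this construction: checking that $G_N$ really lies in $\Gcl$ (regularity, triangle-freeness, black set of size exactly $\Delta$) and that the relocated sets are induced copies of $F$. Both rely on using disjoint copies and on $C$ being an entire red component of $F$. A minor point is the case $\Delta_0 < |F|-|\sigma|$, where the binomial normaliser vanishes. I would handle it by first replacing $G_0$ by a blow-up, which keeps $F$ induced, or simply note that unboundedness along a sequence with fixed $\Delta$ suffices once $\Delta_0$ is large.
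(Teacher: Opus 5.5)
Your proof is correct and follows essentially the paper's route. For the ``if'' direction the paper runs the same greedy count, phrased as an induction that deletes an unlabelled vertex farthest from the anchors: a black vertex costs at most $\Delta(G)$ choices because $|B(G)|=\Delta(G)$, and a red one at most $\Delta(G)$ via an embedded neighbour. For the ``only if'' direction it likewise multiplies copies of the anchorless component $C$ while keeping $\Delta$ fixed.

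Your construction there is tidier than the paper's. You adjoin all-red copies of a regular host $G_0$ that already contains $F$. The paper instead adjoins bare copies of $C$ to an arbitrary $G_0$, which in general breaks regularity and does not guarantee that the rest of $F$ is present.
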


\begin{proof}
($\Leftarrow$) Induction on $|F| - |\sigma|$. The base case
$|F| = |\sigma|$ gives $c(F;G) = 1$. For the inductive step, every
component has an anchor (labelled or black); for each unlabelled
vertex $u$, let $d(u)$ be its distance to an anchor in its component.
Pick an unlabelled $v$ maximising $d(v)$ and set $F' = F[V(F)
\setminus \{v\}]$; the labelled-or-black anchor property persists in
$F'$ (a vertex cut off from its anchor by deleting $v$ would lie farther
from that anchor than $v$, contradicting the maximality of $d(v)$).
By induction $c(F';G) \in O(\Delta(G)^{|F|-|\sigma|-1})$. To count
extensions to $F$, fix a copy of $F'$ in $G$; the vertex $u$ playing
the role of $v$ satisfies one of:
\begin{itemize}
\item $v$ was black in $F$. Then $u$ is black in $G$; since the black
set has size exactly $\Delta(G)$ there are $\Delta(G)$ choices.
\item $v$ was red in $F$. Then by maximality of $d(v)$, $v$ has at
least one neighbour in $V(F)\setminus\{v\}$ (otherwise $v$ would form
its own component with no anchor); $u$ is adjacent to the image of
that neighbour, so there are at most $\Delta(G)$ choices.
\end{itemize}
In either case $c(F;G) \leq c(F';G) \cdot \Delta(G) \in
O(\Delta(G)^{|F|-|\sigma|})$. The label-extension condition is
immediate: an extension preserves the labelled-or-black-anchor
property.

($\Rightarrow$) Suppose $F$ has a component $C$ with no labelled or
black vertex. Take any $G_0 \in \Gcl$ and let $G_k$ be the disjoint
union of $G_0$ with $k$ disjoint copies of $C$. We realise each copy
by adding the appropriate independent red vertices; $G_k \in \Gcl$
(the construction preserves regularity, triangle-freeness, and the
black-set-size constraint). Each disjoint copy contributes one new embedding of $C$
and hence at least one new embedding of $F$, so $c(F;G_k) =
\Omega(k)$ while $\Delta(G_k) = \Delta(G_0)$. Hence $\rho(F;G_k)$ is
unbounded and $F$ is not a local $\sigma$-flag.
\end{proof}

\subsection{A size-5 objective}

Let $\sigma_O := \brrb$ be the type underlying the $4$-vertex
black-red-red-black 4-path. Its fully labelled version $\brrbmarked$ is
the size-$4$ $\sigma_O$-flag, the unit of $\Lcl^{\sigma_O}$.
By Lemma~\ref{lem:pentagon-local} $\sigma_O$ is a local type;
moreover $|\mathrm{Stab}(\theta)| = 2$ (the unique non-trivial
automorphism flips the 4-path end-to-end), so
$\llbracket\brrbmarked\rrbracket = (2/4!)\brrb = (1/12)\brrb$. By
Corollary~\ref{cor:unlabel-ext},
\[
\tfrac{1}{12}\,\phi(\brrb)
= \phi(\llbracket\brrbmarked\rrbracket)
= \phi(\llbracket\brrbmarked \cdot \ext_1^{\sigma_O}\rrbracket)
= \phi(\llbracket\ext_1^{\sigma_O}\rrbracket),
\]
where the last two equalities use the extension-multiplication identity
$\phi(\llbracket f\cdot\ext\rrbracket) = \phi(\llbracket f\rrbracket)$ and
that $\brrbmarked$ is the unit of $\Lcl^{\sigma_O}$.
Define the size-$5$ \emph{objective vector}
\[
O := \llbracket \ext_1^{\sigma_O} \rrbracket \in \Lcl^\emptyset_5.
\]
Bounding $\phi(O)$ above bounds
$\phi(\brrb) = 12\,\phi(O)$ and hence, via
Lemma~\ref{lem:brrb-suffices}, $P(H,v)/\Delta(H)^4$.

\subsection{A size-5 SDP and the 1/4 bound}

Take the basis $\Bcl = (F_1, \dots, F_\ell)$ of all local
$\emptyset$-flags of size $5$ in $\Gcl$, where $\ell = 58$. We denote
them by $F_j$ for $j \in \{1,\dots,58\}$ (a representative
selection drawn inline below as $\pentF{4}, \pentF{5}, \ldots$).

\begin{lemma}
\label{lem:size5-bound}
$\frac{1}{4}\emptyset - O \in \SemCone^\emptyset$.
\end{lemma}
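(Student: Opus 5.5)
The statement is a computer-found certificate, so the plan is to exhibit an explicit decomposition of the form \eqref{eq:sdp-decomp} and verify it coefficient by coefficient on the basis $\Bcl=(F_1,\dots,F_{58})$. Concretely, I would look for
\[
\tfrac14\emptyset - O = \sum_i \alpha_i\, g_i + \beta\, g_{\mathrm{bl}} + \llbracket h_1^2\rrbracket + \llbracket h_2^2\rrbracket ,
\]
with $\alpha_i,\beta \geq 0$. The pieces are as follows.
\begin{itemize}
\item The $g_i$ are unlabelled extension differences $\pm\llbracket \ext_a^{\tau}-\ext_b^{\tau}\rrbracket$ and $\pm(\llbracket f\cdot\ext_a^\tau\rrbracket-\llbracket f\rrbracket)$ over small local types $\tau$ of size $4$. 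These lie in $\SemCone^\emptyset$ with $\phi$-value $0$ by Corollary~\ref{cor:unlabel-ext}, and must be expanded to size $5$.
\item $g_{\mathrm{bl}}$ is a black-vertex normalisation. The black set has size exactly $\Delta(G)$, so $\phi(\llbracket \text{black vertex}\rrbracket$-type element$)$ equals a known constant, and I would lift it to size $5$ via the chain rule. The constraint ``every red vertex has at most $\Delta$ black neighbours'' gives inequalities of the same kind.
\item The $h_j$ lie in $\Lcl^{\sigma_j}$ for two local types $\sigma_j$ of size $3$, for instance a labelled black--red edge and a labelled red vertex with a black neighbour. This choice makes the size-$5$ products land in $\Lcl_5^\emptyset$, and the types are local by Lemma~\ref{lem:pentagon-local}.
\end{itemize}

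The key steps run in order. First I enumerate the $58$ local $\emptyset$-flags of size $5$ in $\HeredG$. These are the triangle-free $2$-coloured graphs with independent black set in which every component contains a black vertex, per Lemma~\ref{lem:pentagon-local}. Next I compute the coefficient vector of $O=\llbracket\ext_1^{\sigma_O}\rrbracket$, whose entries are the $q_{\sigma_O}$-weighted flags with a $\brrb$ path plus one neighbour of the black end. I then compute the structure matrices $M_{\sigma_j}^{(i)}$ by the chain rule and unlabelling, and the size-$5$ expansions of the linear constraints. With this data, an SDP solver produces floating-point $Y_{\sigma_j}$ and $\alpha$. To turn that into a proof, I round to rationals, project onto the affine space of exact identities, and check $Y_{\sigma_j}\succeq 0$ by exact $LDL^\top$ factorisation. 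Soundness is then Lemma~\ref{lem:weak-duality} together with Lemma~\ref{lem:positivity}.

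As a sanity check and a guide to the certificate, I would first derive the bound $1/4$ by hand. Think of $\phi(\brrb)\le 3$, that is, $c(\brrb)\lesssim \Delta^4/8$ after dividing by $\binom{\Delta}{4}\cdot 24$ normalisations, consistent with Lemma~\ref{lem:per-vertex-tight}. Heuristically, for a red–red edge $xy$ the number of paths is $d_B(x)d_B(y)$, and triangle-freeness forces $N(x)\cap N(y)=\emptyset$, so $d_B(x)+d_B(y)\le\Delta$ and $d_B(x)d_B(y)\le \tfrac14\Delta\, (d_B(x)+d_B(y))$ by AM–GM. Summing against the black degrees then gives the $\tfrac14$ rate. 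This suggests what the Cauchy--Schwarz blocks encode: the square $\llbracket (d_B(x)-d_B(y))^2\rrbracket$ on the type ``labelled red–red edge'', combined with a block handling the black-degree sum. If that type is too large for size $5$, I would use a black–red type instead.

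The main obstacle is the exact rationalisation. Because $1/4$ is expected to be tight, the optimal $Y$ is singular, and naive rounding destroys positive semidefiniteness. I would handle this by identifying the kernel of $Y$ from the extremal construction (the per-vertex extremal configuration of Lemma~\ref{lem:per-vertex-tight}). I would then restrict $Y$ to its orthogonal complement before rounding, and absorb the residual slack into the nonnegative $\alpha_i$ on the extension constraints.
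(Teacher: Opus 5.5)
Your architecture matches the paper's: extension differences on $4$-vertex local types, the black-vertex normalisation (in the paper, $5\emptyset - F_6$), and two Cauchy--Schwarz blocks on $3$-vertex local types, checked coefficientwise on the $58$ flags. As a proof, however, it has a real gap. The whole content of the lemma is that such a decomposition exists with $\lambda=\frac14$ at size $5$, and you never produce one. ``Solve, round, verify by $LDL^\top$'' presupposes that the size-$5$ optimum is at most $\frac14$, and nothing in the proposal shows this.

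Your hand check does not supply it either. Summing $d_B(x)d_B(y)\le\frac{\Delta}{4}(d_B(x)+d_B(y))$ over red edges gives $c(\brrb)\le\frac{\Delta}{4}\sum_x d_B(x)\,d_R(x)$. With $\sum_x d_B(x)=\Delta^2$ and $d_R(x)=\Delta-d_B(x)$, which is close to $\Delta$ when $d_B(x)$ is small, this yields only $c(\brrb)\lesssim\Delta^4/4$. That is $\phi(\brrb)\le 6$ and $\phi(O)\le\frac12$, a factor $2$ short of the target $\Delta^4/8$ you identified. The two losses never coincide: AM--GM is tight only at $d_B=\Delta/2$, while the degree bound is tight only when $d_B$ is small.

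Two further details are wrong. First, the type ``labelled red--red edge'' is not local: its underlying graph has no black vertex, so Lemmas~\ref{lem:pentagon-local} and~\ref{lem:local-type-equiv} exclude it, and Lemma~\ref{lem:positivity} gives nothing for squares over it. The obstruction is locality, not size. Second, your example ``size-$3$'' types have two vertices, so their squares land in size $4$. The paper's blocks live on the labelled paths $B$--$R$--$B$ and $B$--$R$--$R$ with red centre ($\sigma_6,\sigma_7$).

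There are two ways to close the gap.

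The first is to write down the certificate, as the paper does. It puts weights $1,\frac14,1,1,2$ on~\eqref{eq:sig1}--\eqref{eq:sig5}, takes six copies of~\eqref{eq:b1}, and adds explicit squares of $f,g$ over $\sigma_6$ and $h,g'$ over $\sigma_7$, summing exactly to $30\emptyset-120\,O$. The coefficients are small exact rationals, so the singular-$Y$ rounding problem you anticipate never arises.

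The second is to repair your heuristic, which then becomes a complete certificate-free proof. For $G\in\Gcl$ and red $x$, put $a_x:=d_B(x)$. Regularity and independence of the black set give $\sum_x a_x=\Delta^2$ and $d_R(x)=\Delta-a_x$. Triangle-freeness gives $a_x+a_y\le\Delta$ on red edges. Replace AM--GM by the convex weight $W(a):=\frac{\Delta^2a}{8(\Delta-a)}$. With $s=a_x+a_y$ you get $a_xa_y\le\frac{s^2}{4}\le 2W(\frac{s}{2})\le W(a_x)+W(a_y)$, where the middle step is $(\Delta-s)^2\ge0$ and the last is Jensen. Sum over red $x$ with $d_R(x)\ge1$, where $a_x<\Delta$: this gives $c(\brrb;G)=\sum_{xy}a_xa_y\le\sum_x d_R(x)W(a_x)\le\frac{\Delta^2}{8}\sum_x a_x=\frac{\Delta^4}{8}$ for every $G\in\Gcl$. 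Hence $\phi(\brrb)\le3$ and $\phi(O)=\frac1{12}\phi(\brrb)\le\frac14$ for every $\phi\in\Phi^\emptyset$.

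This is the continuous analogue of the weight argument of Lemma~\ref{lem:sixty}. It gives a human-checkable proof of this lemma. The SDP route is what generalises to objectives such as $O_Q$.
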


\begin{proof}
We exhibit an explicit cone decomposition.

For each local type $\sigma$ of size $4$ and $i, j \in [4]$,
$\llbracket\ext_i^\sigma - \ext_j^\sigma\rrbracket \in
\SemCone^\emptyset$ by Corollary~\ref{cor:unlabel-ext}. We use five
specific instances, scaled by $120 = 5!$ to clear denominators.
Expanding the definition of $\ext_i^\sigma$ as a sum of size-$5$
$\sigma$-flags and applying $\llbracket\,\cdot\,\rrbracket$ gives the
right-hand side of each:
\begin{align}
120\,\llbracket\ext_2^{\sigma_1} - \ext_1^{\sigma_1}\rrbracket
&= -4\pentF{4} - 6\pentF{5} + 2\pentF{12} + 2\pentF{24} + 4\pentF{32},
\label{eq:sig1}\\
120\,\llbracket\ext_1^{\sigma_2} - \ext_2^{\sigma_2}\rrbracket
&= 24\pentF{6} - 4\pentF{13} - 4\pentF{15} - 2\pentF{18} - 2\pentF{25}
- 12\pentF{33} - 6\pentF{34},
\label{eq:sig2}\\
120\,\llbracket\ext_2^{\sigma_3} - \ext_1^{\sigma_3}\rrbracket
&= 2\pentF{11} + \pentF{12} - 2\pentF{15} + \pentF{22} + 2\pentF{24}
- 2\pentF{25},
\label{eq:sig3}\\
120\,\llbracket\ext_2^{\sigma_4} - \ext_1^{\sigma_4}\rrbracket
&= \pentF{8} + \pentF{12} - 2\pentF{15} + 4\pentF{31} + 4\pentF{32}
- 6\pentF{34},
\label{eq:sig4}\\
120\,\llbracket\ext_3^{\sigma_5} - \ext_1^{\sigma_5}\rrbracket
&= -2\pentF{16} - \pentF{17} + \pentF{37} + 2\pentF{55}.
\label{eq:sig5}
\end{align}
The types $\sigma_1, \dots, \sigma_5$ are the five $4$-vertex local
types $\pentsig{1}, \pentsig{2}, \pentsig{3}, \pentsig{4}, \pentsig{5}$.

Since the black set has size $\Delta(G)$,
$\rho(\vertex; G) = 1$ where $\vertex$ denotes the size-$1$
black-vertex $\emptyset$-flag. By
Corollary~\ref{cor:unlabel-ext} and direct computation
$\llbracket(\ext_1^{\vertex})^{4}\rrbracket = \frac{1}{5}\pentF{6}$,
so $\phi(\pentF{6}) = 5$ and
\begin{equation}
\label{eq:b1}
5\emptyset - \pentF{6} \in \SemCone^\emptyset.
\end{equation}

Next we exhibit two Cauchy--Schwarz blocks. Let
$\sigma_6 := \pentsig{6}$ be the local $3$-vertex type with skeleton
$\{0,1\}, \{0,2\}$ and colours $[R, B, B]$. Define
\[
f := -\pentcf{0}{3} + \tfrac{1}{4}\pentcf{0}{4} + \tfrac{1}{4}\pentcf{0}{5}
+ \tfrac{1}{2}\pentcf{0}{6},
\qquad
g := -\tfrac{1}{4}\pentcf{0}{4} + \tfrac{1}{4}\pentcf{0}{5},
\]
both in $\Lcl^{\sigma_6}$, where the flag $\pentcf{0}{3}$ uses the
$\sigma_6$-extended vertex $v_3$ coloured $R$. By
Lemma~\ref{lem:positivity}, $\llbracket f^2\rrbracket, \llbracket
g^2\rrbracket \in \SemCone^\emptyset$. Direct chain-rule
expansion of each product followed by averaging gives
\begin{align*}
120\,\llbracket f^2\rrbracket
&= 4\pentF{4} - \pentF{8} - 2\pentF{11} + \pentF{13}
+ \tfrac{1}{4}\pentF{18} - \pentF{22} - 4\pentF{31} + 3\pentF{33}
+ \tfrac{1}{4}\pentF{40} + \tfrac{1}{4}\pentF{55},\\
120\,\llbracket g^2\rrbracket
&= \tfrac{1}{4}\pentF{18} - \tfrac{1}{4}\pentF{40} - \tfrac{1}{4}\pentF{55},
\end{align*}
and summing,
\begin{equation}
\label{eq:cs0}
4\pentF{4} - \pentF{8} - 2\pentF{11} + \pentF{13}
+ \tfrac{1}{2}\pentF{18} - \pentF{22} - 4\pentF{31} + 3\pentF{33}
\in \SemCone^\emptyset.
\end{equation}

Let $\sigma_7 := \pentsig{7}$ be the local $3$-vertex type with the
same skeleton as $\sigma_6$ and colours $[R, B, R]$. Define
\[
h := \tfrac{1}{2}\pentcf{1}{4} - \pentcf{1}{5}
+ \tfrac{1}{2}\pentcf{1}{7},
\qquad
g' := \pentcf{1}{3} - \pentcf{1}{4} + \pentcf{1}{5} - \pentcf{1}{7},
\]
both in $\Lcl^{\sigma_7}$. By Lemma~\ref{lem:positivity},
$\llbracket h^2\rrbracket, \llbracket g'^2\rrbracket \in
\SemCone^\emptyset$. Chain-rule expansion and averaging give
\begin{align*}
120\,\llbracket h^2 \rrbracket
&= -2\pentF{9} + \pentF{15} + 2\pentF{16}
+ \tfrac{1}{2}\pentF{25} + \tfrac{3}{2}\pentF{34}
- \pentF{37} - 2\pentF{55},\\
120\,\llbracket g'^2 \rrbracket
&= 6\pentF{5} - 4\pentF{12} + 4\pentF{15} + 2\pentF{16} + 2\pentF{17}
- 4\pentF{24} + 2\pentF{25} - 8\pentF{32}\\
&\quad + 6\pentF{34} - 2\pentF{37} - 4\pentF{55},
\end{align*}
and summing,
\begin{equation}
\label{eq:cs1}
6\pentF{5} - 2\pentF{9} - 4\pentF{12} + 5\pentF{15} + 4\pentF{16}
+ 2\pentF{17} - 4\pentF{24} + \tfrac{5}{2}\pentF{25}
- 8\pentF{32} + \tfrac{15}{2}\pentF{34} - 3\pentF{37} - 6\pentF{55}
\in \SemCone^\emptyset.
\end{equation}

The convex combination
\[
1\cdot\eqref{eq:sig1}
+ \tfrac{1}{4}\cdot\eqref{eq:sig2}
+ 1\cdot\eqref{eq:sig3}
+ 1\cdot\eqref{eq:sig4}
+ 2\cdot\eqref{eq:sig5}
+ 6\cdot\eqref{eq:b1}
\]
yields the linear sum
\begin{multline}
\label{eq:linsum}
30\emptyset - 4\pentF{4} - 6\pentF{5} + \pentF{8} + 2\pentF{11}
+ 4\pentF{12} - \pentF{13} - 5\pentF{15} - 4\pentF{16} - 2\pentF{17}\\
- \tfrac{1}{2}\pentF{18} + \pentF{22} + 4\pentF{24}
- \tfrac{5}{2}\pentF{25} + 4\pentF{31} + 8\pentF{32} - 3\pentF{33}
- \tfrac{15}{2}\pentF{34} + 2\pentF{37} + 4\pentF{55}
\in \SemCone^\emptyset.
\end{multline}
Adding~\eqref{eq:cs0}, \eqref{eq:cs1}, \eqref{eq:linsum} yields
\[
30\emptyset - 2\pentF{9} - \pentF{37} - 2\pentF{55} \in \SemCone^\emptyset.
\]
Computing the right-hand side of $O = \llbracket\ext_1^{\sigma_O}
\rrbracket$ by expanding $\ext_1^{\sigma_O}$ as a sum of size-$5$
$\sigma_O$-flags (the three flags differ by the adjacencies of the
single new vertex, the red neighbour of position $1$, to the remaining
labelled positions) and applying $\llbracket\,\cdot\,\rrbracket$ gives
\[
120\,O = 2\pentF{9} + \pentF{37} + 2\pentF{55}.
\]
Therefore $30\emptyset - 120\,O \in \SemCone^\emptyset$, equivalently
$\tfrac{1}{4}\emptyset - O \in \SemCone^\emptyset$.
\end{proof}

\begin{proof}[Proof of Theorem~\ref{thm:simple}]
By Lemma~\ref{lem:size5-bound}, $\phi(O) \leq 1/4$ for every $\phi \in
\Phi^\emptyset$, so $\phi(\brrb) = 12\phi(O) \leq 3$. For any
$\Delta$-increasing sequence $(G_k) \subseteq \Gcl$ with limit
functional $\phi$,
\[
\lim_k \frac{c(\brrb; G_k)}{\binom{\Delta(G_k)}{4}} \leq 3,
\]
which with $\binom{\Delta}{4} = \Delta^4/4! + o(\Delta^4)$ gives
$\lim c(\brrb;G_k)/\Delta(G_k)^4 \leq 3/24 = 1/8$. By
Lemma~\ref{lem:brrb-suffices}, $P(H,v)/\Delta(H)^4 \lesssim 1/8$ over
regular triangle-free $H$. By Lemma~\ref{lem:local-count},
$P(H)/(|H|\Delta(H)^4) \lesssim 1/40$. Finally
Lemmas~\ref{lem:asymp-suffices} and~\ref{lem:regular-suffices} remove
the regularity and asymptotic qualifiers.
\end{proof}

\subsection{Tightness at the per-vertex level}

Tightening only the per-vertex bound $P(G,v)/\Delta(G)^4$ cannot
improve the constant $1/40$ of Theorem~\ref{thm:simple}, since a
simple construction realises the size-$5$ certificate's $1/8$ bound at
the vertex level.

\begin{lemma}
\label{lem:per-vertex-tight}
For every even $k$ there exists an (almost-)regular triangle-free
graph $G_k$ with $\Delta(G_k) = k+1$ and a vertex $v \in V(G_k)$ on
exactly $k^4/8$ pentagons, so $P(G_k,v)/\Delta(G_k)^4 \to 1/8$.
\end{lemma}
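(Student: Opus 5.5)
The plan is an explicit construction: a subgraph of a blowup of $C_5$ in which $v$ is one of the five parts. I split $N(v)$ into two halves, route every pentagon through $v$ around the five parts, and balance the part sizes and inter-part degrees so that every degree stays at most $k+1$.

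\emph{Construction.} Let $k$ be even. Take a vertex $v$ and pairwise disjoint sets $A,B,X,Y$ with $|A|=|B|=k/2$ and $|X|=|Y|=k$. The edges are:
\begin{itemize}
\item $v$ to every vertex of $A\cup B$;
\item $A$ to $X$, complete bipartite;
\item $B$ to $Y$, complete bipartite;
\item a $k/2$-regular bipartite graph between $X$ and $Y$, for instance the circulant $x_i\sim y_j$ iff $j-i\in\{0,\dots,k/2-1\}\pmod k$.
\end{itemize}
The degrees are then
\[
\deg v = k,\qquad \deg a = 1+k \ (a\in A),\qquad \deg b = 1+k \ (b\in B),
\]
\[
\deg x = k/2+k/2 = k \ (x\in X),\qquad \deg y = k \ (y\in Y).
\]
So $\Delta(G_k)=k+1$ and $G_k$ is almost regular. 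If exact regularity is wanted, apply the doubling of Lemma~\ref{lem:regular-suffices}; this only adds a matching between copies, keeps $v$'s pentagon count and $\Delta$, and keeps triangle-freeness.

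\emph{Triangle-freeness and the pentagon count.} The sets $\{v\},A,X,Y,B$ are independent. Every edge joins cyclically consecutive parts in the order $v,A,X,Y,B,v$, so $G_k$ is a subgraph of a blowup of $C_5$ and is triangle-free. In such a graph every $5$-cycle must visit the five parts once each in cyclic order. It is therefore automatically induced, since consecutive parts are the only adjacent pairs and a chord would require a non-consecutive pair.

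A pentagon through $v$ thus has the form $v\,a\,x\,y\,b$ with $a\in A$, $x\in X$, $y\in Y$, $b\in B$, and the tuple determines the pentagon uniquely because $A$ and $B$ are distinguished. Counting tuples:
\begin{itemize}
\item $k/2$ choices of $a$;
\item $k$ choices of $x$, as $A$–$X$ is complete;
\item $k/2$ choices of $y\in N(x)\cap Y$;
\item $k/2$ choices of $b$, as $B$–$Y$ is complete.
\end{itemize}
This gives $P(G_k,v)=k^4/8$, and since $\Delta(G_k)=k+1$ the ratio $k^4/(8(k+1)^4)\to 1/8$.

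\emph{Main obstacle.} The delicate step is the degree balancing. Making $X$–$Y$ complete would give $k^4/4$ pentagons through $v$, but would push $\deg x$ to $3k/2$. The choice of a $k/2$-regular $X$–$Y$ graph is exactly what keeps $\Delta=k+1$ while still achieving $k^4/8$. Evenness of $k$ is used only to make $k/2$ an integer so that $A$, $B$ and the regular bipartite graph exist.
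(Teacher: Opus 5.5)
Your construction and count are correct, and this is essentially the paper's argument. The paper's $(k/2)$-blowup of $C_6$, with $v$ joined to the two antipodal supernodes $S_0,S_3$, is exactly your graph with $A=S_0$, $B=S_3$, $X=S_1\cup S_5$ and $Y=S_2\cup S_4$; the only difference is that its $X$--$Y$ layer is two disjoint copies of $K_{k/2,k/2}$ rather than your circulant. Your framing as a subgraph of a $C_5$-blowup is a mild generalisation, since any $k/2$-regular bipartite middle layer works. The one claim you leave unjustified, that every $5$-cycle visits the five parts once in cyclic order, holds because five steps of $\pm1$ can only sum to $0 \bmod 5$ if they all have the same sign.
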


\begin{proof}
Let $G_k$ be the $(k/2)$-blowup of $C_6$ (six supernodes of size $k/2$
each, with each pair of consecutive supernodes inducing a complete
bipartite graph $K_{k/2,k/2}$) together with one extra vertex $v$
joined to all $k$ vertices of two antipodal supernodes
(Figure~\ref{fig:hexagon}). A triangle in $G_k$ would project to a
triangle in $C_6$ or use $v$; the first is impossible because $C_6$ is
triangle-free, the second because the two neighbouring supernodes of
$v$ are not adjacent in the hexagon. The maximum degree is $k$ except
at the $k$ vertices in the two supernodes adjacent to $v$, where it is
$k+1$.

Each pentagon through $v$ uses $v$, one vertex from each supernode
adjacent to $v$ (a factor of $(k/2)^2$), and two further vertices
along the hexagon in either the clockwise or counterclockwise
direction (a factor of $2 \cdot (k/2)^2$). The product is
$(k/2)^2 \cdot 2 \cdot (k/2)^2 = k^4/8$.
\end{proof}

\begin{figure}[ht]
\centering
\includegraphics[scale=2.5]{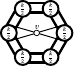}
\caption{The hexagon-blowup-plus-extra-vertex construction of
Lemma~\ref{lem:per-vertex-tight}.}
\label{fig:hexagon}
\end{figure}

This obstruction motivates the richer functional $Q$ of the next
section, which combines $P(G,v)$ with neighbour contributions and
escapes the per-vertex ceiling.

\section{A tighter pentagon bound}
\label{sec:tight}

We prove Theorem~\ref{thm:tight}. The architecture follows
Section~\ref{sec:simple}: reduce to a per-vertex asymptotic bound,
express as $\phi$ of a flag-algebra objective, and bound by a
certificate. The objective is no longer a single 4-path count; instead it
is a per-vertex functional that combines $P(G,v)$ with a weighted sum
over neighbours.

\subsection{\texorpdfstring{The functional $Q(G,v)$}{The functional Q(G,v)}}

For a regular triangle-free graph $G$ and $v \in V(G)$, set
\[
Q(G,v) := \Delta(G)\cdot P(G,v) + \sum_{u \in N(v)} P(G,u).
\]
Summing over $v$ and using regularity, the double sum reorders as
\[
\sum_v \sum_{u \in N(v)} P(G,u)
= \sum_u \deg(u)\,P(G,u)
= \Delta(G)\sum_u P(G,u),
\]
which gives
\begin{align*}
\sum_{v \in V(G)} Q(G,v)
&= \Delta(G)\sum_v P(G,v) + \Delta(G)\sum_u P(G,u)\\
&= 2\Delta(G)\sum_v P(G,v) = 10\Delta(G) P(G),
\end{align*}
the last step using $\sum_v P(G,v) = 5P(G)$, as each pentagon is
counted at its five vertices.
A bound $Q(G,v) \lesssim c \Delta(G)^5$ therefore yields
$P(G) \lesssim (c/10)\,|G|\Delta(G)^4$. Taking $c = 0.2073$ delivers
Theorem~\ref{thm:tight}.

\begin{lemma}
\label{lem:Q-v}
For every regular triangle-free $G$ and $v \in V(G)$,
$Q(G,v) \lesssim 0.2073\,\Delta(G)^5$ as $\Delta(G) \to \infty$.
\end{lemma}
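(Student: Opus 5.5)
We follow the architecture of Section~\ref{sec:simple}: encode $Q(G,v)/\Delta(G)^5$ as a limit functional of a size-$8$ objective $O_Q$ in a local flag algebra over a coloured auxiliary class, and bound $\phi(O_Q) \le 0.4146$ by an explicit SDP certificate. The constant then follows from $\binom{\Delta}{5} \sim \Delta^5/5!$ and the automorphism factor of the unlabelled type.

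\emph{The auxiliary class.} Given regular triangle-free $G$ and $v$, we colour $G - v$ so that $v$'s neighbourhood $N(v)$ is black and everything else is red. This is exactly as in Lemma~\ref{lem:brrb-suffices}, with the same $O(\Delta^3)$ corrections to restore regularity, so we work in the class $\Gcl$ of Section~\ref{sec:simple}.

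\emph{Encoding the two terms.} The term $\Delta(G)\,P(G,v)$ is $\Delta\cdot c(\brrb;G') + O(\Delta^4)$. In the algebra it becomes $\phi(\brrb)$ multiplied by the black-vertex flag, which has $\phi = 1$. Equivalently, by Corollary~\ref{cor:unlabel-ext}, it is $\llbracket \ext\rrbracket$ extended by an extra black vertex.

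The neighbour term $\sum_{u\in N(v)} P(G,u)$ counts pairs consisting of a black vertex $u$ and a pentagon through $u$. We classify the pentagons through $u$ by their intersection with the colour classes. Pentagons through $u$ avoid $v$ except for $O(\Delta^3)$ of them, namely those using the edge $uv$. Such a pentagon has its vertices at distance at most $2$ from $u$, so it lies entirely in $G-v$. Every pentagon through a black $u$ is then a $5$-vertex coloured flag with a black anchor, and it is local by Lemma~\ref{lem:pentagon-local}. Each such count is $\rho$ of a sum of $\sigma$-flags of type ``black vertex'', and after unlabelling it is a combination of size-$5$ (or size-$6$) flags.

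Both terms are then lifted to size $8$ using the extension identity $\phi(\llbracket f\cdot \ext\rrbracket) = \phi(\llbracket f\rrbracket)$ of Corollary~\ref{cor:unlabel-ext} and multiplication by the black-vertex unit. Their sum is $O_Q$, normalised so that
\[
Q(G,v)/\Delta^5 \to c_0\,\phi(O_Q),
\]
with $c_0\cdot 0.4146 \le 0.2073$, which suggests $c_0 = 1/2$.

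\emph{The certificate.} We enumerate the local $\emptyset$-flags of size $8$ in $\Gcl$ together with all local types of sizes $2,4,6$ of matching parity. We then solve the SDP~\eqref{eq:sdp-decomp} with three kinds of constraints:
\begin{itemize}
\item the extension-difference constraints of Corollary~\ref{cor:unlabel-ext};
\item the black-vertex normalisation $\phi(\vertex)=1$, lifted to size $8$;
\item Cauchy--Schwarz blocks $\llbracket Y_\sigma\rrbracket$.
\end{itemize}
The floating-point solution is rounded to a rational one. PSD-ness of each block is certified by an exact $LDL^\top$ decomposition with nonnegative $D$, and the linear identity $\lambda\emptyset - O_Q = \sum$ is checked exactly, with any residual absorbed into slack. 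Weak duality (Lemma~\ref{lem:weak-duality}) then gives $\phi(O_Q)\le 0.4146$ for all $\phi$, and hence Lemma~\ref{lem:Q-v} along every $\Delta$-increasing sequence.

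\emph{Main obstacle.} The computational core is the rounding step. The size-$8$ basis is large, the optimum is nearly degenerate (the plateau mentioned in Section~\ref{sec:vandermonde-floor}), and naive rounding breaks PSD-ness. We would first identify and project onto the kernel vectors forced by near-extremal constructions such as the Clebsch blowup and the $C_6$ construction of Lemma~\ref{lem:per-vertex-tight}. Then we would round with a small margin ($0.4146$ rather than the numerical optimum) so that exact feasibility survives.

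A secondary subtlety is bookkeeping: the boundary pentagons through the edge $uv$ and the regularity repair must each be shown to contribute $O(\Delta^4)$ to $Q$, hence $o(\Delta^5)$.
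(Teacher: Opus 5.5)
Your proposal is essentially the paper's proof: the same colouring of $G-v$ with $N(v)$ black, the same encoding of $\Delta(G)P(G,v)$ by black-red-red-black $4$-paths and of $\sum_{u\in N(v)}P(G,u)$ by pentagons through a black vertex (which, after unlabelling, is the paper's $F_{56}+2F_{55}$), lifted to a size-$8$ objective with $\phi(O_Q)=2Q(G,v)/\Delta(G)^5+o(1)$ (your guess $c_0=1/2$ is exactly the order-$2$ stabiliser factor), and bounded by $0.4146$ via $LDL^\top$-certified Cauchy--Schwarz blocks, a linear residual and weak duality. One correction to your rounding plan: the Clebsch blowups give $\phi(O_Q)=0.384$ and the $C_6$ construction about $0.25$, so neither attains the size-$8$ optimum $\approx 0.41458$, complementary slackness forces no kernel vectors from them, and projecting onto such vectors could break the bound; the paper instead absorbs rounding error with per-block Tikhonov shifts $\lambda_i I$ and a slack budget.
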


The rest of this section proves Lemma~\ref{lem:Q-v}. The setup uses
the same 2-coloured class $\Gcl$ as Section~\ref{sec:simple}; the
contribution $P(G,v)$ corresponds, as in
Lemma~\ref{lem:brrb-suffices}, to black-red-red-black 4-paths in the
auxiliary graph. Each pentagon $u u_1 u_2 u_3 u_4 u$ with $u \in N(v)$
visits the black set $N(v)$ at most twice, since the black set is
independent; in the 2-coloured class each such pentagon corresponds
either to the size-$5$ $\emptyset$-flag $\pentF{55}$ (two black
vertices among the five, necessarily non-adjacent since the black set
is independent) or to $\pentF{56}$ (one black, four red). Direct enumeration
gives
\[
\rho(\pentF{56} + 2\pentF{55}; G')
= \frac{\sum_{u \in N(v)} P(G,u)}{\binom{\Delta(G)}{5}} + o(1),
\]
where $G'$ is the 2-colouring of $G$ as in
Lemma~\ref{lem:brrb-suffices}; the coefficient $2$ on $\pentF{55}$
records that a pentagon with two black vertices is counted at each of
them in $\sum_{u \in N(v)} P(G,u)$.

\subsection{A size-8 objective}

Using Corollary~\ref{cor:unlabel-ext} to lift each contribution to
size $n$, with $\sigma_1 := \brrb$ and $\sigma_2, \sigma_3$ the
$5$-vertex types obtained by fully labelling $\pentF{56}$ and
$\pentF{55}$ respectively, the extension $\ext_1$ acting at the
(labelled) black endpoint in each case, define
\[
O_Q := \bigl\llbracket \brrbmarked \cdot (\ext_1^{\sigma_1})^{n-4}\bigr\rrbracket
+ \bigl\llbracket F_{56}^\bullet \cdot (\ext_1^{\sigma_2})^{n-5}\bigr\rrbracket
+ 2\bigl\llbracket F_{55}^\bullet \cdot (\ext_1^{\sigma_3})^{n-5}\bigr\rrbracket,
\]
where $F_{55}^\bullet, F_{56}^\bullet$ are the labelled versions of
$\pentF{55}, \pentF{56}$. We take $n = 8$.

The size-$8$ unlabelled flag basis $\Bcl_8 = (F_1, \dots, F_m)$ of
$\Gloc^\emptyset$ in the class $\Gcl$ has $m = 9295$ elements.

\begin{lemma}
\label{lem:O-Q-bound}
$0.4146\emptyset - O_Q \in \SemCone^\emptyset$. In particular, for
every $\phi \in \Phi^\emptyset$, $\phi(O_Q) \leq 0.4146$.
\end{lemma}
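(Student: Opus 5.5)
The plan is the same architecture as Lemma~\ref{lem:size5-bound}, carried out at size $8$. We exhibit an explicit decomposition of the form~\eqref{eq:sdp-decomp},
\[
0.4146\,\emptyset - O_Q = \sum_i \alpha_i\, g_i + \sum_{\sigma} \bigl\llbracket z_\sigma^\top Y_\sigma z_\sigma \bigr\rrbracket,
\]
expressed in the basis $\Bcl_8$. The linear terms $g_i$ are of three kinds: the extension differences $\llbracket \ext_i^\sigma - \ext_j^\sigma\rrbracket$ (and their lifts $\llbracket f\cdot\ext_i^\sigma - f\rrbracket$) from Corollary~\ref{cor:unlabel-ext}; the black-vertex normalisation, i.e.\ the size-$8$ analogue of~\eqref{eq:b1} coming from $\rho(\vertex;G)=1$; and the trivial constraints $F_i \in \SemCone^\emptyset$ for each basis flag. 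The quadratic terms come from local types $\sigma$ of sizes $2,4,6$ (of the same parity as $8$, so that products of $\sigma$-flags of size $(8+|\sigma|)/2$ land in size $8$). For each such type we need $\downflag{\sigma}$ local, which Lemma~\ref{lem:pentagon-local} supplies as long as every component of $\sigma$ has a black vertex. Positivity of each $\llbracket z_\sigma^\top Y_\sigma z_\sigma\rrbracket$ then follows from Lemma~\ref{lem:positivity}, since $Y_\sigma\succeq 0$ is a sum of rank-one squares.

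Concretely, the steps in order are as follows.

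First, enumerate $\Bcl_8$: all $9295$ local $\emptyset$-flags of size $8$ in $\HeredG$, i.e.\ triangle-free $\{R,B\}$-coloured graphs with independent black set in which every component contains a black vertex.

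Second, expand $O_Q$ in $\Bcl_8$. Each of the three terms is computed by iterating the chain rule of Corollary~\ref{cor:product-all-flags} for the powers $(\ext_1^{\sigma_j})^{n-|\sigma_j|}$ and then unlabelling with the factors $q_{\sigma_j}$. Corollary~\ref{cor:unlabel-ext} guarantees that this lifting does not change $\phi$-values, so that
\[
\phi(O_Q)=\phi\bigl(\llbracket\brrbmarked\rrbracket + \llbracket F_{56}^\bullet\rrbracket + 2\llbracket F_{55}^\bullet\rrbracket\bigr).
\]

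Third, assemble the structure matrices $M_\sigma^{(i)}$ for all admissible types, together with the coefficient vectors of all extension constraints.

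Fourth, solve the primal SDP numerically. We expect a value near $0.4145$, and then round to rationals.

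For the rounding, we project the numerical $Y_\sigma$ onto the affine space cut out by the $9295$ linear equalities. The slack between $0.4146$ and the numerical optimum is absorbed into nonnegative coefficients on the trivial constraints and on $\emptyset$, so that the identity holds exactly over $\Q$. Finally, each rational $Y_\sigma$ is certified PSD by an exact $LDL^\top$ factorisation with nonnegative diagonal $D$. Weak duality (Lemma~\ref{lem:weak-duality}) then gives $\phi(O_Q)\le 0.4146$ for every $\phi\in\Phi^\emptyset$.

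The main obstacle is the exact rounding. With thousands of equality constraints and fairly large blocks, naive rounding destroys positive semidefiniteness whenever the numerical $Y_\sigma$ has near-zero eigenvalues. We would handle this in two ways. First, reduce each block by symmetry, splitting $\Lcl^\sigma$ into invariant and anti-invariant parts under automorphisms of $\sigma$, as in the $f,g$ and $h,g'$ pairs at size $5$. Second, restrict each $Y_\sigma$ to its numerically nonzero eigenspace before projecting, and solve with a small safety margin (target $0.41455$) so that the rounded matrices stay strictly positive on that subspace. A secondary risk is a mis-expanded structure matrix. We would guard against this by checking the expansion against brute-force densities in sample graphs, such as Clebsch blowups, where $\phi(O_Q)$ can be computed directly, and ultimately by the Lean verification of the arithmetic identity.
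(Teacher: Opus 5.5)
Your proposal has the same architecture as the paper's certificate. Both use Cauchy--Schwarz blocks indexed by local types (every component of $\sigma$ containing a black vertex, via Lemmas~\ref{lem:pentagon-local} and~\ref{lem:local-type-equiv}), with inner flags of size $(8+|\sigma|)/2$. Both add a linear residual built from lifted extension differences and the black-vertex normalisation, and both certify each Gram block with an exact $LDL^\top$ factorisation.

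Where you genuinely differ is in how the floating-point solution becomes an exact certificate. The paper rounds the Gram matrices to $12$ digits and adds a per-block Tikhonov shift $\lambda_i I$, so that the integer identity~\eqref{eq:ldl} has strictly positive pivots. It then accepts that the per-flag identity~\eqref{eq:per-flag-int} fails by small residuals. These residuals are controlled only through the aggregated weighted slack $\bigl|\sum_j x_j\,\mathrm{residual}_j\bigr|$ of Lemma~\ref{lem:slack}, which is absorbed by the gap between $0.4146$ and the solver optimum $\approx 0.41458$.

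You instead keep the trivial constraints $F_j \in \SemCone^\emptyset$ as slack variables. You then project onto the affine space of the $9295$ equalities, after symmetry reduction, restriction to numerically nonzero eigenspaces, and a solve at a relaxed target. The decomposition then holds exactly over $\mathbb{Q}$, with the remaining slack sitting on $\emptyset$.

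What each route buys:
\begin{itemize}
\item The paper's route is cheaper to generate: no projection and no kernel bookkeeping, only rounding plus a shift. However, its error is measured against one particular dual vector $x$, and that alone does not bound $\phi$ of the residual for every $\phi \in \Phi^\emptyset$.
\item In the local setting this is a real issue. Local densities are not probabilities, so $\emptyset$ is not the sum of the size-$8$ basis flags in $\Lcl^\emptyset$. Slack on $\emptyset$ is therefore not automatically available to offset negative per-flag residuals, as it would be in the classical max-over-flags argument.
\item Your exact identity avoids this and gives $\phi(0.4146\,\emptyset - O_Q) \geq 0$ for every $\phi$ directly. The cost is a more delicate rounding step: strict feasibility at $0.4146$ and positivity surviving the projection are things only the computation can confirm.
\end{itemize}
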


We prove Lemma~\ref{lem:O-Q-bound} by verifying an explicit size-$8$
SDP certificate. Positivity of each Cauchy--Schwarz block, together
with the linear-residual cone identity, establishes the dual
feasibility statement $0.4146\emptyset - O_Q \in \SemCone^\emptyset$;
applying any $\phi \in \Phi^\emptyset$ then yields
$\phi(O_Q) \leq 0.4146$. Section~\ref{sec:certificates} carries out this
verification via the size-$8$ certificate.

\begin{proof}[Proof of Lemma~\ref{lem:Q-v}]
By Lemma~\ref{lem:O-Q-bound} and the chain-rule expansion of $O_Q$
into per-flag densities (the coefficients $\tfrac{2}{4!}, \tfrac{2}{5!}$
arising from unlabelling the size-$4$ and size-$5$ flags;
Section~\ref{sec:certificates}),
\[
\tfrac{2}{4!}\phi(\brrb) + \tfrac{2}{5!}\phi(\pentF{56} + 2\pentF{55})
\leq 0.4146.
\]
Since $\phi(F) \sim |F|!\,c(F;G')/\Delta(G)^{|F|}$ by the asymptotic
identity $\binom{\Delta}{k} \sim \Delta^k/k!$ of
Lemma~\ref{lem:asymp-averaging}, the factor $1/|F|!$ in each coefficient
cancels; dividing by the factor $2$ common to the coefficients --- the
normalisation factor in $\phi(O_Q) = 2\,Q(G,v)/\Delta(G)^5$
(Section~\ref{sec:certificates}), not the coefficient $2$ on
$\pentF{55}$ --- halves the bound to $0.2073$:
\[
\frac{c(\brrb;G')}{\Delta(G)^4}
+ \frac{c(\pentF{56};G')}{\Delta(G)^5}
+ 2\,\frac{c(\pentF{55};G')}{\Delta(G)^5}
\leq 0.2073 + o(1).
\]
Multiplying through by $\Delta(G)^5$ and substituting the
correspondences
$\Delta(G)\,c(\brrb;G') = \Delta(G)\,P(G,v) + O(\Delta(G)^4)$
and
$c(\pentF{56};G') + 2c(\pentF{55};G') = \sum_{u \in N(v)} P(G,u)
+ O(\Delta(G)^4)$
gives $Q(G,v) \leq 0.2073\,\Delta(G)^5 + o(\Delta(G)^5)$, which is the
required asymptotic.
\end{proof}

\begin{proof}[Proof of Theorem~\ref{thm:tight}]
Sum Lemma~\ref{lem:Q-v} over $v \in V(G)$ and use $\sum_v Q(G,v) =
10\Delta(G) P(G)$ to get $P(G) \lesssim 0.02073\,|G|\Delta(G)^4$ over
regular triangle-free $G$. Reduce to regular via
Lemma~\ref{lem:regular-suffices}, and remove the asymptotic qualifier
via Lemma~\ref{lem:asymp-suffices}.
\end{proof}

\section{The size-5 and size-8 certificates}
\label{sec:certificates}

This section describes the two SDP certificates used in
Sections~\ref{sec:simple} and~\ref{sec:tight} as mathematical objects,
and proves the two ingredients of Lemma~\ref{lem:O-Q-bound} deferred
from Section~\ref{sec:tight}. The certificate data
itself --- the flag bases, block matrices, and rationalised witnesses
--- together with the generator that produces it and the Lean~4
formalisation that verifies it, is available
at~\cite{daveyLocalFlags2024Repo}.

Both certificates are finite collections of rational data admitting a
finite-arithmetic verification of the cone decomposition that
Lemmas~\ref{lem:size5-bound} and~\ref{lem:O-Q-bound} imply. The witnesses
grow with the basis --- the size-$8$ one is exponentially larger than the
size-$5$ --- but the structural complexity of the verification, a finite
integer identity, does not.

\subsection{The size-5 certificate}

The size-$5$ certificate is the explicit cone decomposition that the
proof of Lemma~\ref{lem:size5-bound} exhibits. It uses the size-$5$
local-$\emptyset$-flag basis of cardinality $\ell = 58$, the five
extension-difference vectors~\eqref{eq:sig1}--\eqref{eq:sig5}, the
black-vertex normalisation~\eqref{eq:b1}, and the two Cauchy--Schwarz
blocks~\eqref{eq:cs0} and~\eqref{eq:cs1}. The arithmetic
identity~\eqref{eq:linsum} plus the addition step assembling all three
into $30\emptyset - 120\,O$ is a finite rational arithmetic check on
$58$-vector coefficients.

\subsection{The size-8 certificate: structure}

The size-$8$ certificate is supported on a basis of $m = 9295$
unlabelled local $\emptyset$-flags of size $8$ in the 2-coloured class
$\Gcl$, taken up to isomorphism, and decomposes the bound into $278$
Cauchy--Schwarz blocks, one per local type $\sigma$ --- the block for
$\sigma$ is built from $\sigma$-flags of size $(8 + |\sigma|)/2$, that
is, adding $(8 - |\sigma|)/2$ unlabelled vertices to $\sigma$ ---
together with a linear residual. The blocks
vary in $\sigma$-type and inner dimension; a common integer scale
$\linscale := 10^{12}$ carries the rationalisation of the floating-point
dual matrix $Y_\mathrm{float}$: its $12$-digit rounding
$Y_\mathrm{int} = \lceil \linscale \cdot Y_\mathrm{float} \rfloor$ gives
the rational matrix $Y_\mathrm{rat} = Y_\mathrm{int}/\linscale$. We record the explicit per-block dimensions and
magnitudes with the formalisation~\cite{daveyLocalFlags2024Repo};
Appendix~\ref{app:cert-data} traces one block in full.

Each block $i$ contributes a cone element
\begin{equation}
\label{eq:csblock}
\mathrm{csBlock}_i
:= \biggl\llbracket\sum_{k=0}^{d(i)-1} \tfrac{D_k^{(i)}}{s_i}
\;\mathrm{col}_k^{(i)} \cdot \mathrm{col}_k^{(i)}\biggr\rrbracket
\in \SemCone^\emptyset,
\end{equation}
where $\mathrm{col}_k^{(i)} := \sum_{p=0}^{d(i)-1} L_{p,k}^{(i)}\,
\mathrm{basis}^{(i)}_p$ is the $k$-th column of the certificate's integer
$L$-matrix, $D_k^{(i)}$ is the $k$-th LDL pivot, and $s_i > 0$ is a
shared block-level integer denominator that absorbs the column-wise
scale gaps of $L_i$.

The decomposition that proves Lemma~\ref{lem:O-Q-bound} is
\begin{equation}
\label{eq:size8-decomp}
0.4146\emptyset - O_Q
= \mathrm{linSum}_Q + \sum_{i=0}^{277} \mathrm{csBlock}_i,
\end{equation}
where $\mathrm{linSum}_Q$ is the linear residual built from
extension-difference cone elements and the black-vertex normalisation
relation $\rho(\vertex; G) = 1$, all lifted from size $5$ to size $8$
via Corollary~\ref{cor:unlabel-ext}.

The decomposition~\eqref{eq:size8-decomp} requires three ingredients,
which the subsections below prove.
\begin{enumerate}[label=(C\arabic*)]
\item \label{cert:per-block} Each $\mathrm{csBlock}_i$ lies in
$\SemCone^\emptyset$ (per-block positivity).
\item \label{cert:linres} $\mathrm{linSum}_Q$ lies in
$\SemCone^\emptyset$ (linear-residual positivity).
\item \label{cert:identity} The
identity~\eqref{eq:size8-decomp} holds in $\Lcl^\emptyset_8$
(arithmetic identity).
\end{enumerate}

\subsection{Per-block positivity \ref{cert:per-block}}
\label{subsec:cert-per-block}

For each block $i$ the certificate exhibits an integer LDL identity
\begin{equation}
\label{eq:ldl}
L_i \cdot \operatorname{diag}(D_i) \cdot L_i^\top
= s_i \cdot \bigl(Y_i + \lambda_i \cdot I_{d(i)}\bigr),
\end{equation}
where:
\begin{itemize}
\item $L_i \in \mathbb{Z}^{d(i) \times d(i)}$ is a lower-triangular
matrix with positive integer diagonal entries;
\item $D_i \in \mathbb{Z}_{>0}^{d(i)}$ is the vector of LDL pivots,
strictly positive;
\item $Y_i \in \mathbb{Z}^{d(i) \times d(i)}$ is the integer matrix
$\linscale \cdot Y_{i,\mathrm{rat}}$, which we read off the dual
block matrix at $12$-digit rationalisation;
\item $\lambda_i \in \mathbb{Q}_{>0}$ is a per-block Tikhonov shift
absorbing the solver's dual-feasibility residual;
\item $s_i \in \mathbb{Z}_{>0}$ is a shared block-level integer
denominator absorbing the column-wise scale gaps in $L_i$.
\end{itemize}
The identity~\eqref{eq:ldl} is a finite integer matrix equality. One
verifies it entry-wise: for each pair $(p, q)$ with $p \geq q$ in
$\{0, \dots, d(i)-1\}$,
\[
\sum_{k=0}^{q} L_{p,k}^{(i)}\,D_k^{(i)}\,L_{q,k}^{(i)}
= s_i \cdot \bigl(Y_i[p,q] + \lambda_i\cdot[p=q]\bigr).
\]
Since $D_i > 0$ entry-wise and $L_i$ is lower-triangular,
\[
Y_i + \lambda_i I_{d(i)}
= \tfrac{1}{s_i}\,L_i\operatorname{diag}(D_i) L_i^\top
\succeq 0,
\]
so $Y_i + \lambda_i I$ is positive semidefinite at integer scale. The
identity~\eqref{eq:ldl} together with $D_i > 0$ is therefore an
explicit PSD certificate for the rationalised dual block.

Each column $\mathrm{col}_k^{(i)} \in \Lcl^{\sigma(i)}$ lies
in the $\sigma(i)$-flag algebra at the inner basis size; squaring and
averaging gives, by Lemma~\ref{lem:positivity},
$\llbracket(\mathrm{col}_k^{(i)})^2\rrbracket \in \SemCone^\emptyset$.
Multiplying by the non-negative rational $D_k^{(i)}/s_i > 0$ and
summing over $k$ produces $\mathrm{csBlock}_i \in \SemCone^\emptyset$,
which is~\ref{cert:per-block}.

\subsection{Linear-residual positivity \ref{cert:linres}}

By construction, the linear residual $\mathrm{linSum}_Q$ is a
non-negative integer combination of three families of cone-positive
elements:
\begin{enumerate}[label=(R\arabic*)]
\item \emph{Extension-difference vectors.} For each local type
$\sigma$ entering the certificate with $|\sigma| \in \{4, 5, 6, 7\}$ and
each pair $i, j \in [|\sigma|]$, the vector
$\llbracket\ext_i^\sigma - \ext_j^\sigma\rrbracket \in
\SemCone^\emptyset$ by Corollary~\ref{cor:unlabel-ext}. The certificate uses
a finite set $\mathcal{E}$ of such pairs.
\item \emph{Black-vertex normalisation lifted to size $8$.} For
$k \in [4]$, the relation $\phi(\llbracket(\ext_1^{B_k})^{8-k}\rrbracket)
= 1$ in $\Lcl^\emptyset_8$, where $B_k$ is the $k$-vertex empty
flag with $k$ black vertices, gives cone elements
$m_k \emptyset - \mathrm{lift}_k \in \SemCone^\emptyset$ for
explicit constants $m_k$ encoding the size-$8$ lift of $B_k$.
\item \emph{Black-set cardinality.} The relation $|B(G)| = \Delta(G)$
gives $\rho(B_1;G) = 1$ for every $G \in \Gcl$, so
$\emptyset - B_1 \in \SemCone^\emptyset$.
\end{enumerate}
The certificate writes $\mathrm{linSum}_Q$ as
\[
\mathrm{linSum}_Q = \sum_{e \in \mathcal{E}} \alpha_e
\llbracket\ext_{i(e)}^{\sigma(e)} - \ext_{j(e)}^{\sigma(e)}\rrbracket
+ \sum_{k=1}^4 \beta_k(m_k \emptyset - \mathrm{lift}_k)
+ \gamma(\emptyset - B_1),
\]
with $\alpha_e, \beta_k, \gamma \in \mathbb{Q}_{\geq 0}$ explicit. Each
summand lies in $\SemCone^\emptyset$ by Corollary~\ref{cor:unlabel-ext}
and the cited identities; non-negative linear combination preserves
positivity.

\subsection{The arithmetic identity \ref{cert:identity}}
\label{subsec:cert-identity}

The identity~\eqref{eq:size8-decomp} is an equality of elements of
$\Lcl^\emptyset_8 = \R^{9295}$. We prove it by expanding each side in
the basis $\Bcl_8 = (F_1, \dots, F_{9295})$ and checking entry-wise.

Write the size-$8$ basis-density coefficients of $O_Q$ as
$O_Q = \sum_{j=1}^{9295} \mathrm{coef}_j\,F_j$, with
$\mathrm{coef}_j = -\mathrm{target}_j / \linscale$, where
$\mathrm{target}_j \in \mathbb{Z}$ is the integer target that the
certificate carries. The factor $-1/\linscale$ rescales targets from
the integer certificate units (in which the cone identity reads as an integer
equality) back to the eval-level rational unit of
$O_Q \in \Lcl^\emptyset_8$. The sign flip places the cone inequality
in the standard form $\lambda\emptyset - O_Q \in \SemCone^\emptyset$,
as in Section~\ref{sec:sdp-method}.

Write each csBlock as $\mathrm{csBlock}_i = \sum_{j=1}^{9295}
\mathrm{cs}_{i,j}\,F_j$, where chain-rule expansion
(Definition~\ref{def:product}) of each $\mathrm{col}_k^{(i)} \cdot
\mathrm{col}_k^{(i)}$ followed by averaging
$\llbracket\,\cdot\,\rrbracket$ and basis enumeration produces the
coefficient $\mathrm{cs}_{i,j}$. Write
$\mathrm{linSum}_Q = \sum_{j=1}^{9295} \ell_j F_j$ analogously.

The arithmetic identity~\eqref{eq:size8-decomp} unfolds as the system
of $9295$ equations
\begin{equation}
\label{eq:per-flag}
0.4146 \cdot [F_j = \emptyset] - \mathrm{coef}_j
= \ell_j + \sum_{i=0}^{277} \mathrm{cs}_{i,j},
\qquad j = 1, \dots, 9295,
\end{equation}
where $[F_j = \emptyset]$ is $1$ if $F_j$ is the (size-$8$)
empty-flag basis representative and $0$ otherwise. Clearing the common
denominator $\linscale^2 = 10^{24}$ and multiplying both sides by
$\linscale^2$ converts~\eqref{eq:per-flag} into a system of $9295$
integer identities
\begin{equation}
\label{eq:per-flag-int}
\linscale^2 \cdot \bigl(0.4146 \cdot [F_j = \emptyset]\bigr)
+ \linscale \cdot \mathrm{target}_j
= \linscale^2 \cdot \ell_j
+ \sum_{i=0}^{277} \linscale^2 \cdot \mathrm{cs}_{i,j},
\end{equation}
each side an integer. The per-block chain-rule expansion of
Section~\ref{subsec:cert-per-block} and the linear-residual
expansion of the previous subsection reduce
verification of~\eqref{eq:per-flag-int} to a finite sum of finite
integer products organised by basis index $j$. We discuss the slack
that arises in this verification next.

\subsection{Slack budget}
\label{subsec:cert-slack}

The integer identity~\eqref{eq:per-flag-int} is exact in the certificate's
PSD ingredients $L_i, D_i, \lambda_i, s_i$, but the rationalised dual
matrix $Y_\mathrm{rat}$ inherits a small residual from
rationalising the solver's floating-point output. The solver's primal-dual
optimum is $\approx 0.41458$ with reported max dual-feasibility
residual $\max_k |\trace(M_k \cdot Y_\mathrm{float}) - c_k| \approx
10^{-12}$. After $12$-digit rationalisation, the per-flag integer
residual is
\[
\mathrm{residual}_j :=
s_\mathrm{tot} \cdot \Bigl(\trace(M_j \cdot Y_\mathrm{rat})
+ \sum_i \lambda_i\, \trace(M_j|_i) - c_j\Bigr),
\]
where $s_\mathrm{tot} = \linscale^2 = 10^{24}$ is the shared integer
denominator, $M_j|_i$ is the restriction of $M_j$ to block $i$ with
per-block Tikhonov shift $\lambda_i$, and $M_j$ is the structure matrix
of the $j$-th of the
$9295$ dual constraints (assembled from the block structure matrices
$M_\sigma^{(i)}$). The
aggregated weighted slack
\[
\totalSlack
:= \Bigl|\sum_{j=1}^{9295} x_j \cdot \mathrm{residual}_j\Bigr|
\]
quantifies the error that the identity~\eqref{eq:per-flag-int} carries.

We exhibit a finite slack budget that absorbs this residual at the
tight constant $2073/10000$ --- the $O_Q$-bound $0.4146$ expressed at
the $Q$ normalisation, $2073/10000 = 0.4146/2$, the factor $2$ being
the asymptotic identity $2\,Q(G_k,v_k)/\Delta(G_k)^5 = \phi(O_Q) +
o(1)$ of Lemma~\ref{lem:basis-identity}.

\begin{lemma}[slack budget]
\label{lem:slack}
At the tight rational $2073/10000$ and scale $\linscale = 10^{12}$,
the aggregated weighted slack satisfies
\[
\totalSlack \leq \slackBudget,
\qquad
\slackBudget := 10^{19}.
\]
The measured value is $\totalSlack \approx 5.48 \times 10^{18}$
($= 5.48 \times 10^{-6} \cdot \linscale^2$), so the budget is met with
a safety ratio of $\approx 1.82\times$ in $L$-space, equivalently
$\approx 3.65\times$ in $O_Q$-space.
\end{lemma}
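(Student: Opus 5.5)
This is a finite computational claim about explicit certificate data, so I will prove it by exact integer evaluation plus an a priori bound on the weights $x_j$. I will not attempt any structural argument.

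First, I fix all data as integers: the rationalised blocks $Y_i=\linscale\,Y_{i,\mathrm{rat}}$, the shifts $\lambda_i$, the structure matrices $M_j$ assembled from the $M_\sigma^{(i)}$, and the targets $c_j=-\mathrm{coef}_j$ at scale $\linscale$. I also fix the dual weights $x_j$ as the certificate's rational values, normalised by $x_{i^\star}=1$. The weights should be bounded before any summing. Each $x_j$ approximates $\phi(F_j)$ for a limit functional, and $\phi$ is bounded on size-$8$ local flags by Definition~\ref{def:local-flag}(i). The black-vertex normalisation (R2)--(R3) gives explicit bounds such as $\phi(\pentF{6})=5$ and the lifted $m_k$, which in turn control every $|x_j|$ by an explicit constant. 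I then compute each integer $\mathrm{residual}_j = s_\mathrm{tot}\bigl(\trace(M_j Y_\mathrm{rat})+\sum_i\lambda_i\trace(M_j|_i)-c_j\bigr)$ exactly. This is legitimate because after clearing the common denominator $\linscale^2$ every term is an integer, by the same clearing as in~\eqref{eq:per-flag-int}.

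Next, I bound $\totalSlack=\bigl|\sum_j x_j\,\mathrm{residual}_j\bigr|$. The cheap route is the triangle inequality, $\totalSlack\le \max_j|x_j|\cdot\sum_j|\mathrm{residual}_j|$. Rounding $Y$ to $12$ digits perturbs each entry by at most $\tfrac12\linscale^{-1}$, so each $\trace(M_jY_\mathrm{rat})$ moves by at most $\tfrac12\linscale^{-1}\|M_j\|_1$. Adding the solver's reported dual residual of $\approx10^{-12}$ gives $|\mathrm{residual}_j|\lesssim 10^{24}\cdot 10^{-12}\cdot(\|M_j\|_1+\sum_i\lambda_i|\trace M_j|_i|)$. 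Summed over $9295$ flags with entries of $M_j$ of moderate size, this lands in the range $10^{15}$--$10^{19}$. If the crude bound overshoots $10^{19}$, I fall back to the exact route and evaluate the signed sum $\sum_j x_j\,\mathrm{residual}_j$ as one exact rational, then compare it with $\slackBudget$. The value measured in the statement, $\approx5.48\times10^{18}$, is what that exact evaluation should return, and the ratio $10^{19}/5.48\times10^{18}\approx1.82$ is then immediate.

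Then I check the "tight rational" claim. Here I substitute $\lambda=2\cdot2073/10000=0.4146$ into the $[F_j=\emptyset]$ entry and confirm that the residual at $j=i^\star$ is the one used above. The translation to $O_Q$-space, a factor of $2$, follows from the normalisation $2Q/\Delta^5=\phi(O_Q)+o(1)$, which gives $\approx3.65\times$.

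The main obstacle is the first step: making the bound on the $x_j$ rigorous rather than heuristic. I would first try to avoid needing it at all, by showing that every nonzero $\mathrm{residual}_j$ is supported on flags whose density is dominated by a lifted black-vertex normalisation with explicit $m_k$. That is where the safety margin would be spent.
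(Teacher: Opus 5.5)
Your fallback route is exactly the paper's proof: compute the integer residuals, evaluate the signed sum $\sum_j x_j\,\mathrm{residual}_j$ exactly from the certificate data, and compare its absolute value with $10^{19}$. The paper's proof is nothing more than that integer computation, and it handles the factor-$2$ conversion to $O_Q$-space the same way you do.

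The a priori bound on the $x_j$, which you call the main obstacle, is not needed. Your justification for it would also fail: the $x_j$ are fixed, rationalised solver output, not values $\phi(F_j)$ of a limit functional, so their size is just read off the data. Finally, your triangle-inequality estimate is only a heuristic, and it mis-scales the Tikhonov term $\lambda_i\operatorname{tr}(M_j|_i)$, which is a deliberate shift rather than a $10^{-12}$ rounding error.
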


\begin{proof}
Direct integer arithmetic. The $9295$ values
$\mathrm{residual}_j$ are integers read off the certificate; the dual
weights $x_j$ are integers from the certificate's rationalisation. The signed
weighted sum $\sum_j x_j \cdot \mathrm{residual}_j$ is an integer that
a single pass over the certificate data computes. Its absolute
value is at most $\approx 5.48 \times 10^{18}$ by direct integer
addition; $10^{19}$ bounds it from above.
\end{proof}

Together, the per-block PSD identities of
Section~\ref{subsec:cert-per-block}, the linear-residual identity, the
arithmetic identity~\eqref{eq:per-flag-int}, and the slack-budget
bound of Lemma~\ref{lem:slack} give the proof of
Lemma~\ref{lem:O-Q-bound}.

\begin{proof}[Proof of Lemma~\ref{lem:O-Q-bound}]
By Lemma~\ref{lem:positivity} and~\eqref{eq:ldl} per block, each
$\mathrm{csBlock}_i \in \SemCone^\emptyset$. The linear residual
$\mathrm{linSum}_Q \in \SemCone^\emptyset$ as a non-negative
combination of extension differences and the black-vertex
normalisation. The arithmetic identity~\eqref{eq:per-flag-int} holds
up to an aggregated weighted slack at most $\slackBudget = 10^{19}$ in
integer L-units, which Lemma~\ref{lem:slack} satisfies with a
safety margin of $\approx 1.82\times$. We choose the certificate's tight pair
$(2073, 10000)$ so that the budget exactly covers the
aggregated slack on the right-hand side; combining
gives~\eqref{eq:size8-decomp}, hence $0.4146\emptyset - O_Q \in
\SemCone^\emptyset$.
\end{proof}

\subsection{Basis combinatorial identity}

Lemma~\ref{lem:O-Q-bound} bounds the flag-algebra objective $O_Q$; to
turn it into a bound on the pentagon count we need the basis
combinatorial identity relating $2\,Q(G,v)/\Delta(G)^5$ to the
densities of the size-$8$ unlabelled flag basis $\Bcl_8$.

\begin{lemma}[basis combinatorial identity]
\label{lem:basis-identity}
For every triangle-free regular sequence $(G_k, v_k)$ with strictly
increasing $\Delta(G_k) \to \infty$,
\[
\frac{2 \cdot Q(G_k, v_k)}{\Delta(G_k)^5}
= \sum_{j=1}^{9295}
\mathrm{coef}_j \cdot \rho(\mathrm{basis}_j; G^c_k) + o(1),
\]
where $G^c_k$ is the canonical $2$-colouring of $G_k$ at the basepoint
$v_k$ and $\mathrm{coef}_j = -\mathrm{target}_j/\linscale$.
\end{lemma}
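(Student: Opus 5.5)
The identity has three layers. The first expresses $Q(G_k,v_k)$ through counts of three small coloured flags in $G^c_k$. The second converts those counts into the unlabelled objective $O_Q$ via averaging and extension. The third expands $O_Q$ in the size-$8$ basis, whose coefficients the certificate records as $\mathrm{coef}_j$.

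For the first layer, fix $k$ and write $\Delta=\Delta(G_k)$. From the proof of Lemma~\ref{lem:brrb-suffices} we have $P(G_k,v_k)=c(\brrb;G^c_k)+O(\Delta^3)$. From the enumeration preceding the definition of $O_Q$,
\[
\sum_{u\in N(v_k)}P(G_k,u)=c(\pentF{56};G^c_k)+2c(\pentF{55};G^c_k)+O(\Delta^4).
\]
Multiplying the first relation by $\Delta$ and adding gives $Q=\Delta c(\brrb)+c(\pentF{56})+2c(\pentF{55})+O(\Delta^4)$.

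The normalisations then explain the factor $2$. We have $\binom{\Delta}{4}\sim\Delta^4/24$ and $\binom{\Delta}{5}\sim\Delta^5/120$. The unlabelling coefficients are $\llbracket\brrbmarked\rrbracket=\tfrac{2}{4!}\brrb$, and $\tfrac{2}{5!}$ for the labelled $F_{55}^\bullet,F_{56}^\bullet$, each having a stabiliser of order $2$; I would confirm this by inspecting their automorphisms. Together these give
\[
\frac{2Q}{\Delta^5}=\tfrac{2}{4!}\rho(\brrb)+\tfrac{2}{5!}\rho(\pentF{56}+2\pentF{55})+o(1).
\]

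For the second layer, I would apply Lemma~\ref{lem:asymp-averaging} to each summand of $O_Q$. Each of $\sigma_1,\sigma_2,\sigma_3$ contains a black vertex, so Lemma~\ref{lem:pentagon-local} makes each a local type. It follows that $\rho(\llbracket \brrbmarked\rrbracket;G)$ equals $\rho(\llbracket\brrbmarked\cdot(\ext_1^{\sigma_1})^{4}\rrbracket;G)$ up to $o(1)$, and likewise for the two size-$5$ terms with exponent $3$.

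This step is a finite-$G$ statement, which is stronger than the $\phi$-level Corollary~\ref{cor:unlabel-ext}, so I would prove it directly. By Lemma~\ref{lem:ext-eq-one}, for every labelling $\theta$ we have $\rho(\ext_1^{\sigma};(G,\theta))=1+O(1/\Delta)$ uniformly, since $G_k$ is regular. I would first restore regularity at the black vertices as in Lemma~\ref{lem:brrb-suffices}, at cost $O(\Delta^{-1})$ in density. Theorem~\ref{thm:product-limit} then gives $\rho(f\cdot\ext;(G,\theta))=\rho(f;(G,\theta))(1+O(1/\Delta))$ with constants uniform in $\theta$, because they depend only on the flags. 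Averaging over $\theta$ and invoking Lemma~\ref{lem:asymp-averaging} transfers this to the unlabelled densities; the bounded factor $\rho(\llbracket\sigma\rrbracket;G)$ keeps all error terms $o(1)$.

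For the third layer, the size-$8$ elements are linear combinations of basis flags, so
\[
\rho(O_Q;G^c_k)=\sum_j \mathrm{coef}_j\,\rho(\mathrm{basis}_j;G^c_k),
\]
provided the $\mathrm{coef}_j$ are exactly the chain-rule expansion of $O_Q$. Establishing that is a finite computation, checked against the certificate's $\mathrm{target}_j$.

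I expect the main obstacle to be the bookkeeping of constants. Three normalisations interact: the stabiliser orders $2$, the binomial-versus-power conversion, and the rescaling $\mathrm{coef}_j=-\mathrm{target}_j/\linscale$. The sign convention must also match the one used in Section~\ref{subsec:cert-identity}. I would check them against a test graph, for instance the blowups $\mathrm{Cl}[k]$ of Lemma~\ref{lem:clebsch}, where both sides can be computed in closed form.
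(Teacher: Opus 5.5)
\textbf{Layers one and two are sound.} Your first two layers hold, and are slightly sharper than needed.
\begin{itemize}
\item $P(G_k,v_k)=c(\brrb;G^c_k)$ holds exactly.
\item $\sum_{u\in N(v_k)}P(G_k,u)=c(\pentF{56};G^c_k)+2c(\pentF{55};G^c_k)+2P(G_k,v_k)$.
\item Your finite-level lifting, via the product limit and asymptotic averaging uniformly in $\theta$, is a legitimate substitute for the paper's direct size-$8$ bijection with common denominator $8!/3!$. You are right not to lean on Corollary~\ref{cor:unlabel-ext}, which only speaks about limit functionals.
\item The regularisation detour is unnecessary, since every vertex of $G^c_k$ has degree $\Delta$ or $\Delta-1$.
\end{itemize}
So you do obtain $2Q(G_k,v_k)/\Delta(G_k)^5=\sum_j [O_Q]_j\,\rho(\mathrm{basis}_j;G^c_k)+o(1)$, where $[O_Q]_j$ is the exact rational coefficient of $\mathrm{basis}_j$ in $O_Q$.

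\textbf{The gap: the coefficients are rounded, not exact.} The gap is the third layer's proviso that $\mathrm{coef}_j=[O_Q]_j$ exactly. This is false. By definition $\mathrm{coef}_j=-\mathrm{target}_j/\linscale$ with $\mathrm{target}_j=-\lceil m_j\linscale/6720\rfloor$, a $12$-digit rounding of $[O_Q]_j=m_j/6720$. Since $10^{12}/6720=2^6 5^{11}/21$, the rounding is exact only when $21\mid m_j$.

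A concrete failure: take the size-$8$ flag formed by an induced black--red--red--black path with four extra red leaves at one black endpoint. Exactly one labelling realises $\brrbmarked$ with all four remaining vertices adjacent to label $1$, so $[O_Q]_j=4!/8!=1/1680=4/6720$. But $10^{12}/1680\notin\mathbb{Z}$.

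Hence your proposed check against $\mathrm{target}_j$ fails. What remains is the additive term $\sum_j(\mathrm{coef}_j-[O_Q]_j)\,\rho(\mathrm{basis}_j;G^c_k)$. It is bounded by $\tfrac12\linscale^{-1}\sum_j\rho(\mathrm{basis}_j;G^c_k)=O(\linscale^{-1})$, since each local density is bounded. However, along a convergent subsequence it tends to $\sum_j(\mathrm{coef}_j-[O_Q]_j)\,\phi(\mathrm{basis}_j)$, which is generally nonzero, so it is not $o(1)$.

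With the rounded coefficients you can therefore only prove the identity up to $o(1)+O(10^{-12})$. The paper's proof makes exactly this concession: it states that $\mathrm{coef}_j$ is exact only up to the $12$-digit rounding and routes that residual into the slack budget of Lemma~\ref{lem:slack}. To close your argument, do one of the following rather than assuming the discrepancy away:
\begin{itemize}
\item prove the identity for the exact coefficients $m_j/6720$; or
\item carry the rounding term explicitly into the slack accounting.
\end{itemize}
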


\begin{proof}
A combinatorial bijection identifies pentagon-extension tuples on
$(G_k, v_k)$ with labelled induced embeddings of the size-$8$ basis
flags into the canonical $2$-colouring $G^c_k$. Each pentagon
contributes either via the $\brrb$ component ($v \in V(C_5)$) or via
the $\pentF{56} + 2\pentF{55}$ component ($u \in N(v) \cap V(C_5)$),
and the factor of $2$ on the left-hand side absorbs the
double-counting that occurs in the $\sum_{u \in N(v)} P(G, u)$ term
(each pentagon containing two vertices of $N(v)$ contributes twice).

The per-class target is
\[
\mathrm{target}_j = -\Bigl\lceil m_j \cdot \linscale \big/ \tfrac{8!}{3!}
\Bigr\rfloor \;\leq\; 0,
\qquad \tfrac{8!}{3!} = 6720,
\]
where $m_j \in \mathbb{Z}_{\geq 0}$ is the objective's integer weight
for $\mathrm{basis}_j$ (a pentagon-extension count) and $\lceil\cdot\rfloor$
denotes rounding to the nearest integer at the $12$-digit scale. The
normalising denominator is the descending factorial
$8!/3! = 6720$ --- the injective count of the five pentagon vertices in
the size-$8$ frame --- not a per-flag automorphism count, which instead
sits in the density $\rho$. The non-positive sign is that of the
solver's ``$\min\,{-}(\cdot)$'' convention
(Section~\ref{sec:sdp-method}); it makes
$\mathrm{coef}_j = -\mathrm{target}_j/\linscale \approx m_j/6720 \geq 0$
(exact up to the $12$-digit rounding, whose residual the slack budget of
Section~\ref{subsec:cert-slack} absorbs), so that
$\phi(O_Q) = \sum_j \mathrm{coef}_j\,\rho(\mathrm{basis}_j;\cdot) \geq 0$
and the bound $\phi(O_Q) \leq 0.4146$ is a genuine upper bound on the
non-negative quantity $2\,Q(G_k,v_k)/\Delta(G_k)^5$.

Normalising induced counts to densities via
$\rho(\mathrm{basis}_j; G^c_k) = c(\mathrm{basis}_j; G^c_k) /
\binom{\Delta(G_k)}{8}$ and using $\binom{\Delta}{8} = \Delta^8/8! +
o(\Delta^8)$ absorbs the asymptotic factor; the $o(1)$ correction
comes from the $\binom{\Delta - O(1)}{8}/\binom{\Delta}{8} = 1 +
O(1/\Delta)$ ratios that enter the canonical-colouring restriction.
The detailed combinatorial enumeration follows the architecture of
Section~\ref{sec:simple} for the size-$5$ case
(Lemma~\ref{lem:size5-bound}) generalised to size $8$ via the
per-isomorphism-class count above.
\end{proof}

The combination of Lemma~\ref{lem:basis-identity} and
Lemma~\ref{lem:O-Q-bound} completes the proof of
Lemma~\ref{lem:Q-v} and hence of Theorem~\ref{thm:tight}.

\section{The Clebsch extremum}
\label{sec:clebsch}

We now prove Lemma~\ref{lem:clebsch}, establish the conjecture at
maximum degree five together with its extremal characterisation
(Theorem~\ref{thm:delta5}), and survey the empirical evidence in
support of the general conjecture.

\subsection{The Clebsch graph}

We may define the \emph{Clebsch graph} $\mathrm{Cl}$ as the folded
$5$-cube: take the vertex set $\{0,1\}^5$ identifying antipodes, so
$|V(\mathrm{Cl})| = 16$, and join two equivalence classes if they
contain representatives at Hamming distance $1$. Equivalently,
$\mathrm{Cl}$ is the unique strongly regular graph
$\mathrm{SRG}(16,5,0,2)$~\cite{brouwerDistanceRegularGraphs1989}: it is
$5$-regular on $16$ vertices, every edge lies in $0$ triangles, and
every non-edge has exactly $2$ common neighbours. In particular,
$\mathrm{Cl}$ is triangle-free with $\Delta(\mathrm{Cl}) = 5$. Its
spectrum is $\{5, 1^{10}, (-3)^5\}$.

\begin{lemma}
\label{lem:clebsch-count}
$\mathrm{Cl}$ contains exactly $192$ induced copies of $C_5$.
\end{lemma}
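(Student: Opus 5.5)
Since $\mathrm{Cl}$ is triangle-free, every $5$-cycle in it is automatically induced: a chord of a $5$-cycle would create a triangle. So it suffices to count (unordered) $5$-cycles, and the cleanest route is the trace method the paper alludes to. We write $A$ for the adjacency matrix. Then $\trace(A^5)$ counts closed walks of length $5$. In a triangle-free graph, a closed $5$-walk either traces a genuine $5$-cycle or backtracks. A backtracking walk of odd length $5$ must contain a closed $3$-walk or a triangle; since there are no triangles, every degenerate closed $5$-walk reduces to walks built from back-and-forth steps, which have even length. Hence there are no degenerate closed $5$-walks at all in a triangle-free graph. More precisely, any closed walk of odd length contains an odd cycle of length at most the walk length, and here that cycle must have length $5$, so the walk is itself a $5$-cycle. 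Each unordered $5$-cycle gives $10$ closed walks ($5$ starting points times $2$ directions), so the number of pentagons is $\trace(A^5)/10$.

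It remains to compute $\trace(A^5)$ from the spectrum $\{5, 1^{10}, (-3)^5\}$ stated above:
\[
\trace(A^5)=5^5+10\cdot 1^5+5\cdot(-3)^5=3125+10-1215=1920.
\]
This gives $1920/10=192$ pentagons, as claimed.

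The only step needing care is the claim that every closed $5$-walk in a triangle-free graph is a $5$-cycle. I would justify it directly. A closed walk of odd length $5$ cannot be reduced entirely by deleting backtracks (each deletion removes $2$ steps, and the result must be nonempty of odd length), so it contains an odd cycle of length $3$ or $5$. Triangle-freeness rules out $3$. A $5$-cycle uses all $5$ steps, so the walk is exactly that cycle.

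As an independent cross-check, one can count combinatorially using the $\mathrm{SRG}(16,5,0,2)$ parameters. Count paths $x\,a\,y\,b\,z$ with $x z$ an edge, or equivalently count pairs of $2$-paths. Take a non-adjacent pair $\{x,y\}$: they have exactly $2$ common neighbours. A pentagon through a vertex $v$ consists of two neighbours $a,b$ of $v$, which are non-adjacent, together with an edge $cd$ with $c\sim a$, $d\sim b$, and $c,d\notin N(v)\cup\{v\}$. For fixed $a,b$ ($\binom52=10$ choices), $a$ has $4$ neighbours other than $v$, each at distance $2$ from $v$. For such a $c$, its neighbours adjacent to $b$ are the common neighbours of $c$ and $b$ (a non-adjacent pair), which number $2$. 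One of these is... I would verify by this count that $P(\mathrm{Cl},v)=60$, and hence $16\cdot 60/5=192$. I expect this tally to be the fiddly step, so the spectral computation is the primary proof.
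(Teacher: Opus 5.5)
Your argument is correct and is essentially the paper's proof: compute $\trace(A^5)=1920$ from the spectrum, use triangle-freeness to see that every closed $5$-walk is a traversal of a (necessarily induced) pentagon, and divide by $10$. Your odd-cycle justification of that middle step is more explicit than the paper's one-line remark. The SRG cross-check you left unfinished is not needed. If you do complete it, note two things: $c$ must avoid the second common neighbour of $a$ and $b$, which leaves $3$ choices; and neither common neighbour of $c$ and $b$ is $a$ (since $a\not\sim b$), so both give pentagons and $P(\mathrm{Cl},v)=10\cdot 3\cdot 2=60$.
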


\begin{proof}
We count closed $5$-walks $W$ in $\mathrm{Cl}$ that visit five distinct
vertices and have no chord. A closed $5$-walk corresponds to a cyclic
sequence $(v_0, v_1, v_2, v_3, v_4)$ with $v_i v_{i+1} \in E$ (indices
mod $5$). The number of closed walks of length $5$ from a fixed vertex
equals $A^5_{vv}$ where $A$ is the adjacency matrix; summing,
$\sum_v A^5_{vv} = \trace(A^5)$. From the spectrum,
$\trace(A^5) = 5^5 + 10\cdot 1^5 + 5\cdot(-3)^5 = 3125 + 10 - 1215 = 1920$.

Each closed $5$-walk that is a pentagon (five distinct vertices, no
chord) contributes exactly $10$ times to $\trace(A^5)$ (a pentagon has
$10$ traversals: $5$ starting vertices $\times$ $2$ directions); no
other closed $5$-walks exist because $\mathrm{Cl}$ is triangle-free,
which forbids closed walks of length $5$ with a vertex repetition. So
the number of pentagons is $1920/10 = 192$.
\end{proof}

\subsection{The blowup}

For $k \geq 1$, the \emph{$k$-blowup} $\mathrm{Cl}[k]$
replaces each vertex of $\mathrm{Cl}$ by an independent set of size
$k$ and each edge by the complete bipartite graph $K_{k,k}$. The
blowup is triangle-free (a triangle would project to a triangle in
$\mathrm{Cl}$), has $|V(\mathrm{Cl}[k])| = 16k$, and is $5k$-regular.

\begin{lemma}[pentagon count of a balanced blowup]
\label{lem:blowup-pentagon}
Let $H$ be a triangle-free graph and $H[k]$ its $k$-blowup.
Then $P(H[k]) = k^5\,P(H)$.
\end{lemma}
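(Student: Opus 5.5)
The plan is to compare induced pentagons in $H[k]$ with induced pentagons in $H$ through the projection map $\pi\colon V(H[k]) \to V(H)$, which sends each vertex to the vertex of $H$ whose independent class contains it. The goal is to show that $\pi$ induces a surjection from induced pentagons of $H[k]$ onto induced pentagons of $H$ in which every fibre has size exactly $k^5$. That gives $P(H[k]) = k^5 P(H)$ directly.

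\emph{Step 1: the lifting direction.} Fix an induced pentagon of $H$ on vertices $a_1,\dots,a_5$. Choose one vertex $x_i$ from the class of each $a_i$, which can be done in $k^5$ ways. For $i \neq j$, the blowup definition says $x_i x_j$ is an edge of $H[k]$ if and only if $a_i a_j$ is an edge of $H$. Hence $H[k][\{x_1,\dots,x_5\}] \cong H[\{a_1,\dots,a_5\}] \cong C_5$. Different choices of the tuple give different vertex sets, since the $a_i$ are distinct. So each induced pentagon of $H$ has at least $k^5$ preimages.

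\emph{Step 2: every induced pentagon of $H[k]$ projects injectively.} This is the only step with any content. Let $\{x_1,\dots,x_5\}$ span an induced $C_5$ in $H[k]$, and suppose two of its vertices $x,x'$ lie in the same class. Then $x$ and $x'$ are non-adjacent twins in $H[k]$, since they have identical neighbourhoods. It follows that within the pentagon they have the same two neighbours. But in $C_5$ distinct vertices have distinct neighbourhoods: non-adjacent vertices share exactly one neighbour. This is a contradiction, so $\pi$ is injective on the vertex set.

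By the edge rule for vertices in distinct classes, $\pi$ then maps the pentagon onto an induced $C_5$ of $H$. Its fibre is exactly the set of $k^5$ transversal choices from Step 1, which completes the count.

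I do not expect a real obstacle. The only point needing care is Step 2, namely ruling out pentagons that revisit a class; the twin argument handles it. In particular, the triangle-freeness hypothesis is not actually needed for the count, though it is harmless. It is only used elsewhere, for instance in Lemma~\ref{lem:clebsch}, to guarantee that $\mathrm{Cl}[k]$ stays triangle-free. Combining this lemma with Lemma~\ref{lem:clebsch-count} and the already noted values $|\mathrm{Cl}[k]| = 16k$ and $\Delta(\mathrm{Cl}[k]) = 5k$ yields Lemma~\ref{lem:clebsch}:
\[
\frac{192k^5}{16k\cdot(5k)^4} = \frac{12}{625}.
\]
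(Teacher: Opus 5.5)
Your proof is correct. It has the same skeleton as the paper's: project via $\pi$, show $\pi$ is injective on every induced pentagon, then lift each pentagon of $H$ to its $k^5$ transversals. The one substantive step, injectivity, is argued differently.

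The paper does a case analysis on where the two collapsed vertices sit in the pentagon. At distance $1$ they would be adjacent inside an independent class. At distance $2$, say $v_0$ and $v_2$, the arc $v_2v_3v_4v_0$ projects to a closed walk of length three in $H$, i.e.\ a triangle, which the hypothesis forbids.

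You instead observe that two vertices of the same class are non-adjacent twins in $H[k]$. No two non-adjacent vertices of $C_5$ have the same neighbourhood, so no hypothesis on $H$ is needed.

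Your route therefore proves more: $P(H[k]) = k^5 P(H)$ holds for every graph $H$, with $P$ counting induced pentagons as the paper defines it. It also gives the factor $k^{|F|}$ for induced copies of any graph $F$ without non-adjacent twins. The paper's argument spends the triangle-free hypothesis, but that costs nothing in context. The lemma is only applied to the Clebsch graph, whose blowups must be triangle-free anyway for Lemma~\ref{lem:clebsch}.
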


\begin{proof}
Let $\pi\colon V(H[k]) \to V(H)$ be the projection sending each blown-up
vertex to its supernode. Every induced $C_5$ in $H[k]$ projects to a
walk $\pi(C_5)$ in $H$ of length $5$. If $\pi$ collapses two vertices
of the pentagon into the same supernode, those two vertices must be
non-adjacent in $H[k]$ (the supernode is an independent set) yet they
sit at positions in the pentagon that are at distance $1$ or $2$;
adjacency at distance $1$ contradicts independence, and at distance
$2$ --- say $v_0$ and $v_2$ merge --- the arc $v_2 v_3 v_4 v_0$
projects to a closed walk of length three at the merged supernode, a
triangle in $H$, which is excluded. Hence $\pi$
restricted to a pentagon is injective, and the image is an induced
$5$-cycle in $H$. Conversely each induced $5$-cycle in $H$ on vertices
$v_0, \dots, v_4$ lifts to $k^5$ choices of $(u_0, \dots, u_4)$ with
$u_i \in \pi^{-1}(v_i)$; each such lift induces a pentagon in $H[k]$
because consecutive supernodes are complete bipartite and
non-consecutive supernodes are non-adjacent in $H$ and
remain non-adjacent in $H[k]$.
\end{proof}

\begin{proof}[Proof of Lemma~\ref{lem:clebsch}]
By Lemma~\ref{lem:clebsch-count}, $P(\mathrm{Cl}) = 192$. By
Lemma~\ref{lem:blowup-pentagon} applied to $H = \mathrm{Cl}$,
$P(\mathrm{Cl}[k]) = 192 k^5$. With $|\mathrm{Cl}[k]| = 16k$ and
$\Delta(\mathrm{Cl}[k]) = 5k$,
\[
\frac{P(\mathrm{Cl}[k])}{|\mathrm{Cl}[k]|\,\Delta(\mathrm{Cl}[k])^4}
= \frac{192 k^5}{16k\cdot (5k)^4}
= \frac{192}{16 \cdot 625}
= \frac{12}{625},
\]
independent of $k$.
\end{proof}

\subsection{\texorpdfstring{The characterisation at $\Delta = 5$}{The characterisation at Delta = 5}}
\label{sec:delta5}

We prove Theorem~\ref{thm:delta5}. Throughout this subsection $G$ is
triangle-free with $\Delta(G) \leq 5$. For a vertex $v$ let $F_v$
denote the subgraph of $G$ induced on the non-neighbours of $v$ ---
the vertices other than $v$ and outside $N(v)$ --- and for
$x \in V(F_v)$ let
\[
A_x := N(x) \cap N(v), \qquad k_x := |A_x|
\]
be the \emph{attachment set} of $x$ and its size. Triangle-freeness
enters through two facts. First, $N(v)$ is an independent set ---
equivalently, every $a \in N(v)$ has all its neighbours other than
$v$ inside $F_v$ --- since two adjacent neighbours of $v$ would close
a triangle. Second, the attachment sets of adjacent non-neighbours
are disjoint: if $xy \in E(F_v)$ and $a \in A_x \cap A_y$, then $axy$
is a triangle. In particular,
$k_x + k_y = |A_x \cup A_y| \leq |N(v)| \leq 5$ whenever
$xy \in E(F_v)$.

\begin{lemma}
\label{lem:per-vertex-identity}
For every vertex $v$,
\[
P(G,v) \;=\; \sum_{xy \in E(F_v)} k_x\,k_y .
\]
\end{lemma}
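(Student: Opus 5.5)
I will prove the identity by setting up an explicit bijection between pentagons through $v$ and triples consisting of an edge $xy$ of $F_v$, a vertex $a \in A_x$, and a vertex $b \in A_y$. The first step is to pin down the shape of a pentagon through $v$. Write it cyclically as $v, a, x, y, b, v$. Then $a, b \in N(v)$. The vertices $x$ and $y$ are not adjacent to $v$, since $P(G)$ counts induced copies of $C_5$, so $x, y \in V(F_v)$. The edges $ax$, $xy$, $yb$ give $xy \in E(F_v)$, $a \in A_x$ and $b \in A_y$. As a remark, the chord condition is automatic in a triangle-free graph: any chord of a $5$-cycle closes a triangle with two cycle edges. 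So induced $5$-cycles are exactly $5$-cycles, and every pentagon through $v$ has this form.

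Next I handle orientation. A pentagon through $v$, viewed as an unordered cycle, determines its two non-neighbours of $v$ and the edge $xy$ between them as an unordered edge of $F_v$. Fixing that edge, the neighbour of $v$ adjacent to $x$ is some $a \in A_x$, and the one adjacent to $y$ is some $b \in A_y$. I therefore define the map from pentagons to pairs (unordered edge $\{x,y\}$, choice of $a \in A_x$ and $b \in A_y$). Swapping the roles of $x$ and $y$ swaps $a$ and $b$, so the summand $k_x k_y$ is symmetric. This means each unordered edge contributes exactly $k_x k_y$ and no factor of two appears.

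For the inverse, take $xy \in E(F_v)$, $a \in A_x$ and $b \in A_y$. Then $v a x y b$ is a closed walk of length five. I must check that its five vertices are distinct. We have $a \neq b$ by the disjointness of attachment sets of adjacent non-neighbours, which was proved just before the lemma. The vertices $x, y$ differ from $a, b, v$ because they lie in $F_v$, and $x \neq y$ since $xy$ is an edge. Hence the walk is a $5$-cycle, and by the remark it is an induced pentagon. The two maps are mutually inverse, because the cycle recovers $\{x,y\}$ together with the assignment of $a$ to $x$ and $b$ to $y$. Summing over edges gives the identity.

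The only delicate point is distinctness, specifically $a \neq b$. This is exactly the disjointness fact, so no step should be a real obstacle.
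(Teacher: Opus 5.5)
Your proof is correct and is essentially the paper's bijection between pentagons through $v$ and triples $(xy, a, b)$ with $xy \in E(F_v)$, $a \in A_x$, $b \in A_y$, with $a \neq b$ coming from disjointness of attachment sets exactly as in the paper. The only difference is how you get inducedness of the closed walk $v\,a\,x\,y\,b$: you use the general fact that every chord of a $5$-cycle closes a triangle, whereas the paper checks the five non-edges $ab$, $vx$, $vy$, $ay$, $bx$ individually (via independence of $N(v)$, the definition of $F_v$, and $A_x \cap A_y = \emptyset$).
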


\begin{proof}
An induced pentagon through $v$ traverses $v\,a\,x\,y\,b$ with
$a, b \in N(v)$; since the pentagon is induced, $vx$ and $vy$ are
non-edges, so $x, y \in V(F_v)$, $xy \in E(F_v)$, $a \in A_x$ and
$b \in A_y$. The pentagon determines this data: $a$ and $b$ are its
two neighbours of $v$, and $xy$ is its opposite edge.

Conversely, every triple $(xy, a, b)$ with $xy \in E(F_v)$,
$a \in A_x$ and $b \in A_y$ arises exactly once. The five vertices
$v, a, x, y, b$ are distinct: $a \neq b$ because $A_x$ and $A_y$ are
disjoint, $x \neq y$ because $xy$ is an edge, and the partition into
$\{v\}$, $N(v)$ and $V(F_v)$ separates the remaining pairs. The closed walk $v\,a\,x\,y\,b$ is an
induced pentagon: $ab$ is a non-edge because $N(v)$ is independent,
$vx$ and $vy$ are non-edges by the choice of $F_v$, and $ay$ and $bx$
are non-edges because $A_x \cap A_y = \emptyset$. Summing the number
$k_xk_y$ of choices of $(a, b)$ over the edges of $F_v$ gives the
identity.
\end{proof}

\begin{lemma}
\label{lem:sixty}
$P(G,v) \leq 60$ for every vertex $v$. If $P(G,v) = 60$, then
$\deg(v) = 5$; every $a \in N(v)$ has exactly four neighbours in
$F_v$; every $x \in V(F_v)$ has $k_x \in \{0, 2\}$; and every
$x \in V(F_v)$ with $k_x = 2$ has exactly three neighbours in $F_v$,
each with $k = 2$.
\end{lemma}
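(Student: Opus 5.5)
The plan is to start from Lemma~\ref{lem:per-vertex-identity}, $P(G,v)=\sum_{xy\in E(F_v)}k_xk_y$, and prove the bound by a discharging argument. Each edge's weight $k_xk_y$ is split between its two endpoints. Each $x\in V(F_v)$ should then end up holding at most $3k_x$. The total is then at most $3\sum_x k_x$.

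\textbf{Step 1: counting edges.} The sum $\sum_{x\in V(F_v)}k_x$ counts edges between $N(v)$ and $F_v$. Each $a\in N(v)$ has degree at most $5$, one edge going to $v$, and all other neighbours in $F_v$ (since $N(v)$ is independent). Hence
\[
\sum_x k_x\le 4\deg(v)\le 20 .
\]

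\textbf{Step 2: local constraints.} Fix $x$ with $k_x=a\ge1$. It has at most $5-a$ neighbours in $F_v$. For each such neighbour $y$, disjointness of attachment sets gives $k_y\le 5-a$.

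\textbf{Step 3: the weight transfer.} For an edge $xy$ with $k_x=a\ge1$ and $k_y=b\ge1$, give $x$ the share
\[
f(a,b)=\tfrac{ab}{2}+(a-b),
\]
and give $y$ the share $f(b,a)$. These sum to $ab$. Edges with an endpoint of $k=0$ contribute nothing and are ignored. Now check that $x$ receives at most $3a$ in each case:
\begin{itemize}
\item $a=1$: at most $4$ edges, each share $1-\tfrac b2\le \tfrac12$, total at most $2<3$.
\item $a=2$: at most $3$ edges, each share exactly $2$, total at most $6$.
\item $a=3$: at most $2$ edges with $b\le2$, each share $3+\tfrac b2\le 4$, total at most $8<9$.
\item $a=4$: one edge with $b=1$, share $5<12$.
\item $a=5$: no neighbours in $F_v$.
\end{itemize}
Summing over $x$ gives $P(G,v)\le 3\sum_x k_x\le 60$. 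The only real design choice is this transfer $f$. I found it by testing linear corrections $t(a-b)$ against the tight local configurations: the case $a=2,b=3$ forces $t\ge1$, and excluding $b=0$ lets $t=1$ survive the case $a=1$.

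\textbf{Step 4: equality.} If $P(G,v)=60$, then every inequality above is tight. First, $\sum_x k_x=20$ forces $\deg(v)=5$ and every $a\in N(v)$ to have exactly four neighbours in $F_v$. Second, each $x$ must receive exactly $3k_x$. That is strictly impossible when $k_x\in\{1,3,4,5\}$, so $k_x\in\{0,2\}$. A vertex with $k_x=2$ must have exactly three $F_v$-neighbours $y$, each contributing a share of $2$. That requires $k_y\ge1$, hence $k_y=2$. The main thing to double-check is the case table, including the boundary situations where a neighbour has $k_y=0$ (share $0$, which only helps the inequality but does matter for the equality analysis).
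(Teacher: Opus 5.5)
Your proof is correct and is essentially the paper's argument: you use the same per-vertex identity, the same capacity bound $\sum_x k_x \le 20$, a discharging step in which each $x$ collects at most $3k_x$, and the same equality bookkeeping. The only difference is the charging rule. You split each edge's weight $k_xk_y$ exactly via the two-variable share $f(k_x,k_y)$, discarding edges at $k=0$. The paper instead uses a one-variable weight $w=(0,\tfrac12,2,4,\tfrac72,0)$ with $pq\le w(p)+w(q)$ for $p+q\le 5$ and $(5-k)w(k)\le 3k$. Consequently the paper rules out edges with $\{k_x,k_y\}=\{0,2\}$ via per-edge tightness, whereas you obtain this from the per-vertex count.
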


\begin{proof}
Define $w \colon \{0, \dots, 5\} \to \mathbb{Q}$ by
\[
\bigl(w(0), \dots, w(5)\bigr) = \bigl(0,\ \tfrac12,\ 2,\ 4,\ \tfrac72,\ 0\bigr).
\]
Inspection of the pairs $p \leq q$ with $p + q \leq 5$ shows
\begin{equation}
\label{eq:pairweight}
p\,q \;\leq\; w(p) + w(q) \qquad (p + q \leq 5),
\end{equation}
and inspection of $0 \leq k \leq 5$ shows
\begin{equation}
\label{eq:tokenweight}
(5 - k)\,w(k) \;\leq\; 3k,
\end{equation}
with equality in~\eqref{eq:tokenweight} exactly at $k \in \{0, 2\}$
(the left side runs through $0, 2, 6, 8, \tfrac72, 0$, the right
through $0, 3, 6, 9, 12, 15$).

Each $x \in V(F_v)$ satisfies
$\deg_{F_v}(x) \leq \deg_G(x) - k_x \leq 5 - k_x$, since the $k_x$
neighbours of $x$ inside $N(v)$ lie outside $F_v$. Counting the pairs $(x, a)$ with $a \in A_x$ from either side, and
recalling that the neighbours of each $a \in N(v)$ other than $v$ lie
in $F_v$,
\begin{equation}
\label{eq:capacity}
\sum_{x \in V(F_v)} k_x
\;=\; \sum_{a \in N(v)} |N(a) \cap V(F_v)|
\;\leq\; \sum_{a \in N(v)} \bigl(\deg(a) - 1\bigr)
\;\leq\; 5 \cdot 4 = 20 .
\end{equation}
Combining these, using~\eqref{eq:pairweight} across each edge of
$F_v$ --- where $k_x + k_y \leq 5$ --- and $w \geq 0$,
\begin{align}
P(G,v)
&= \sum_{xy \in E(F_v)} k_xk_y
\;\leq\; \sum_{xy \in E(F_v)} \bigl(w(k_x) + w(k_y)\bigr)
\;=\; \sum_{x \in V(F_v)} \deg_{F_v}(x)\, w(k_x)
\label{eq:chain-pair}\\
&\leq\; \sum_{x \in V(F_v)} (5 - k_x)\, w(k_x)
\;\leq\; 3\sum_{x \in V(F_v)} k_x
\;\leq\; 60 .
\label{eq:chain-token}
\end{align}

Suppose $P(G,v) = 60$, so every comparison above is an equality.
Equality in~\eqref{eq:capacity} forces $\deg(v) = |N(v)| = 5$ and
$|N(a) \cap V(F_v)| = 4$ for every $a \in N(v)$. Equality
in~\eqref{eq:tokenweight} at every $x$ forces $k_x \in \{0, 2\}$.
Equality in the first comparison of~\eqref{eq:chain-token} forces
$\deg_{F_v}(x) = 5 - k_x$ whenever $w(k_x) > 0$; at $k_x = 2$ this
gives $\deg_{F_v}(x) = 3$. Equality in~\eqref{eq:chain-pair} on each
edge rules out edges with $\{k_x, k_y\} = \{0, 2\}$, where
$k_xk_y = 0 < 2 = w(0) + w(2)$; so the three $F_v$-neighbours of a
vertex with $k = 2$ themselves have $k = 2$.
\end{proof}

\begin{lemma}
\label{lem:rigidity}
Let $G$ be triangle-free with $\Delta(G) \leq 5$ and $P(G,u) = 60$
for every $u \in V(G)$. Then every component of $G$ is isomorphic to
$\mathrm{Cl}$.
\end{lemma}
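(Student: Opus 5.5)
Apply Lemma~\ref{lem:sixty} at every vertex to pin down the local structure, then deduce the strongly regular parameters $\mathrm{SRG}(16,5,0,2)$ for each component. Uniqueness of that graph, already recalled with the definition of $\mathrm{Cl}$, then finishes the proof.

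\emph{Regularity and the non-neighbourhood.} Fix a component $C$ and $v \in C$. Lemma~\ref{lem:sixty} gives $\deg(v)=5$ for every vertex, so $G$ is $5$-regular and triangle-free. In $F_v$, each $x$ has $k_x = |N(x)\cap N(v)| \in \{0,2\}$. Let $X_2 := \{x \in V(F_v) : k_x = 2\}$. Counting pairs $(x,a)$ with $a \in A_x$, equality in~\eqref{eq:capacity} gives $\sum_x k_x = 20$, so $|X_2| = 10$. Every $x \in X_2$ has three $F_v$-neighbours, all in $X_2$. Hence $G[X_2]$ is $3$-regular on $10$ vertices, and $X_2$ has no edges to vertices of $F_v$ with $k=0$.

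\emph{No vertices with $k=0$.} Let $X_0 := V(F_v)\setminus X_2$ and $y \in X_0$. Then $y$ has no neighbour in $N(v)$ and none in $X_2$, so all its neighbours lie in $X_0$. Since neighbours of $v$ and of $N(v)$ lie in $\{v\}\cup N(v)\cup X_2$, every neighbour of a vertex of $\{v\}\cup N(v)\cup X_2$ lies in that set of size $16$. This set is therefore a union of components, so $C = \{v\}\cup N(v)\cup X_2$ and $|C| = 16$. Every non-neighbour $x \ne v$ of $v$ in $C$ lies in $X_2$ and so has exactly $2$ common neighbours with $v$. As $v$ was arbitrary, every non-adjacent pair in $C$ has exactly $2$ common neighbours, and adjacent pairs have $0$ by triangle-freeness.

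\emph{Identification.} Thus $C$ is a strongly regular graph with parameters $(16,5,0,2)$. By uniqueness of $\mathrm{SRG}(16,5,0,2)$~\cite{brouwerDistanceRegularGraphs1989}, $C \cong \mathrm{Cl}$.

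The main obstacle is whether citing uniqueness is acceptable. If a self-contained argument is wanted, I would build the isomorphism by hand. The $10$ vertices of $X_2$ correspond bijectively to the $2$-subsets of $N(v)$: the map $x\mapsto A_x$ is injective, since two vertices with the same pair plus $v$ would give a non-adjacent pair with $\ge 3$ common neighbours. Distinct adjacent $x,y$ have disjoint attachment pairs, so $G[X_2]$ is a $3$-regular subgraph of the Petersen graph $K(5,2)$, hence equal to it. This reconstructs $\mathrm{Cl}$ as $K_{1,5}$ with $N(v)$ joined to the Petersen graph via the pair incidences, which is the standard description of the Clebsch graph.
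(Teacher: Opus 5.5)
Your argument is correct, but it takes a different route from the paper. You apply Lemma~\ref{lem:sixty} at every root to show that $\{v\}\cup N(v)\cup X_2$ is closed under taking neighbours. It is also connected, since each $x\in X_2$ has a neighbour in $N(v)$; you should say this explicitly. Hence each component has $16$ vertices, is $5$-regular, and has $\lambda=0$ and $\mu=2$. You then outsource the identification with $\mathrm{Cl}$ to the uniqueness of $\mathrm{SRG}(16,5,0,2)$.

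The paper never assembles the SRG parameters. From the other roots it extracts only the weaker fact $|N(u)\cap N(w)|\in\{0,2\}$. It uses this to prove that $x\mapsto A_x$ is a bijection from $X$ onto the $2$-subsets of $N(v)$, with two members of $X$ adjacent exactly when their sets are disjoint. It then writes down an explicit isomorphism with the folded $5$-cube: $v\mapsto[0]$, $a_i\mapsto[e_i]$, $x_{ij}\mapsto[e_i+e_j]$.

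Your main route is shorter, but it rests on a classification result that the paper only recalls, and a self-contained or formalised proof would have to supply it. The paper's route is fully elementary.

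Your fallback is essentially the paper's construction, with leaner steps:
\begin{itemize}
\item Injectivity: if $A_x=A_y=\{a,b\}$, then $v,x,y$ are three common neighbours of the non-adjacent pair $a,b$. This is a one-line argument. The paper could have used it too, via its $\{0,2\}$ fact at root $a$, instead of its carrier-counting paragraph.
\item Adjacent iff disjoint: a spanning $3$-regular subgraph of the $3$-regular Petersen graph is the whole graph. This replaces the paper's lemma that every $2$-subset of $N(v)\setminus A_x$ is carried by a neighbour of $x$.
\end{itemize}
To close the fallback, you would still need to match the reconstructed graph with the paper's folded-cube definition of $\mathrm{Cl}$. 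The paper does this explicitly rather than appealing to a ``standard description''.
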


\begin{proof}
Fix $v$ and let $X = \{x \in V(F_v) : k_x = 2\}$.
Lemma~\ref{lem:sixty} at $v$ gives $\deg(v) = 5$, and the
count~\eqref{eq:capacity}, now an equality throughout, gives
$\sum_x k_x = 20$; since every
$k_x$ is $0$ or $2$, we get $|X| = 10$. Every member of $X$ has
exactly three $F_v$-neighbours, all in $X$.

We use one consequence of the hypothesis at the other vertices: any
two distinct non-adjacent vertices $u, w$ of $G$ have
$|N(u) \cap N(w)| \in \{0, 2\}$. Indeed $w$ is a non-neighbour of
$u$, the count $|N(u) \cap N(w)|$ is the attachment number of $w$ at
the root $u$, and Lemma~\ref{lem:sixty} applied at $u$ --- where
$P(G,u) = 60$ by hypothesis --- confines it to $\{0, 2\}$.

We first show that for $x \in X$, every $2$-subset of
$C := N(v) \setminus A_x$ is the attachment set of an
$F_v$-neighbour of $x$. The three $F_v$-neighbours of $x$ lie in $X$
and their attachment sets avoid $A_x$ (disjointness across edges), so
they are $2$-subsets of the $3$-set $C$. Let $Q \subseteq C$ with
$|Q| = 2$ and write $C \setminus Q = \{e\}$. The common neighbours of
$x$ and $e$ are exactly the $F_v$-neighbours of $x$ attached to $e$:
a common neighbour is not $v$ (since $vx$ is a non-edge) and not in
$N(v)$ (two adjacent vertices in $N(v)$ would close a triangle with
$v$), so it is an $F_v$-neighbour of $x$ whose attachment set
contains $e$, and conversely. Since $x \neq e$ (they lie in $V(F_v)$
and $N(v)$ respectively) and $xe$ is a non-edge (as $e \notin A_x$),
there are at most two common neighbours. If no
$F_v$-neighbour of $x$ had attachment set $Q$, all three would
contain $e$ --- the only $2$-subset of $C$ avoiding $e$ is $Q$ ---
producing three common neighbours of $x$ and $e$.

Next, distinct members of $X$ carry distinct attachment sets.
Suppose $A_x = A_y = \{a, b\}$ with $x \neq y$. Adjacent members of
$X$ have disjoint attachment sets, so $xy$ is a non-edge; the common
neighbours of $x$ and $y$ include $a$ and $b$ and number $0$ or $2$,
hence $N(x) \cap N(y) = \{a, b\}$. In particular, $x$ and $y$ have no
common neighbour inside $F_v$. Write $C = N(v) \setminus \{a, b\} =
\{q, r, s\}$. By the previous paragraph applied to $x$ and to $y$,
each $2$-subset of $C$ is the attachment set of an $F_v$-neighbour of
$x$ and of one of $y$, and these are distinct; say that a member of
$X$ \emph{carries} its attachment set, so each $2$-subset of $C$ has
at least two carriers. Count the members of $X$ attached to $q$:
there are exactly four, since the four $F_v$-neighbours of $q$
(Lemma~\ref{lem:sixty} at $v$) have $k \geq 1$ and hence lie in $X$.
The carriers of $\{q, r\}$ and of $\{q, s\}$ are at least four
distinct such members --- carriers of different sets are different
vertices --- so no member of $X$ carries $\{q, a\}$; symmetrically,
none carries $\{s, a\}$. Now take $z \in X$ with
$A_z = \{q, r\}$. The previous paragraph applied to $z$ produces an
$F_v$-neighbour of $z$ with attachment set $\{a, s\} \subseteq
N(v) \setminus \{q, r\}$ --- which no member of $X$ carries.

Consequently $x \mapsto A_x$ is injective, hence a bijection from
the ten-element set $X$ onto the ten $2$-subsets of $N(v)$. For each
$x \in X$, the attachment sets of its three $F_v$-neighbours are
distinct (injectivity) $2$-subsets of $C$, and all three $2$-subsets
of $C$ occur among them; so the neighbours of $x$ in $X$ are exactly
the carriers of the three $2$-subsets disjoint from $A_x$. Two
members of $X$ are therefore adjacent if and only if their attachment
sets are disjoint.

Write $N(v) = \{a_1, \dots, a_5\}$ and let $x_{ij}$ be the member of
$X$ with attachment set $\{a_i, a_j\}$. Let $H$ be the subgraph
induced on the sixteen vertices $v$, $a_1, \dots, a_5$, $x_{ij}$. In
$G$, the vertex $v$ is adjacent exactly to the five $a_i$. Each
$a_i$ is adjacent exactly to $v$ and its four neighbours in $F_v$
(the first fact of the setup); those neighbours lie in $X$, are
attached to $a_i$, and carry distinct sets, so they are the four
$x_{ij}$ with $j \neq i$. Each $x_{ij}$ is adjacent exactly to
$a_i$, $a_j$ and the three $x_{kl}$ with
$\{k,l\} \cap \{i,j\} = \emptyset$ --- five vertices in each case,
all inside $H$. Since $\Delta(G) \leq 5$, no edge of $G$ leaves $H$;
and $H$ is connected, every $x_{ij}$ being joined to $v$ through
$a_i$; so $H$ is the component of $v$.

Finally, $H$ is the folded $5$-cube: map $v$ to the class of the zero
vector $0$, $a_i$ to the class of the unit vector $e_i$, and $x_{ij}$
to the class of $e_i + e_j$. These sixteen classes are distinct (each has a
representative of weight at most two). Two classes are adjacent when
some pair of representatives is at Hamming distance one, i.e.\ when
the representatives' distance is $1$ or $4$. The adjacent pairs among
the listed classes are exactly: $[0]$ with $[e_i]$ (distance $1$);
$[e_i]$ with $[e_i + e_j]$ (distance $1$); and $[e_i + e_j]$ with
$[e_k + e_l]$ for $\{i,j\} \cap \{k,l\} = \emptyset$ (distance $4$).
The remaining pairs have distance $2$ or $3$: $[0]$ with
$[e_i + e_j]$, $[e_i]$ with $[e_j]$, $[e_i]$ with $[e_j + e_k]$ for
$i \notin \{j,k\}$, and $[e_i + e_j]$ with $[e_i + e_k]$. This
adjacency table is the one just computed for $H$, so the map is an
isomorphism onto $\mathrm{Cl}$. As $v$ was arbitrary, every component
of $G$ is a copy of $\mathrm{Cl}$.
\end{proof}

\begin{proof}[Proof of Theorem~\ref{thm:delta5}]
Each pentagon contains five vertices, so
$\sum_{v \in V(G)} P(G,v) = 5\,P(G)$, and Lemma~\ref{lem:sixty} gives
$5\,P(G) \leq 60\,|G|$, i.e.\ $P(G) \leq 12\,|G|$. If equality holds,
then the sum $\sum_v P(G,v) = 60\,|G|$ has $|G|$ terms, each at most
$60$, so $P(G,v) = 60$ for every $v$, and Lemma~\ref{lem:rigidity} shows
that every component is a copy of $\mathrm{Cl}$. Conversely, a
pentagon is connected and lies inside a single component; if $G$ has
$m$ components, each a copy of $\mathrm{Cl}$, then $|G| = 16m$ and
$P(G) = 192m = 12\,|G|$ by Lemma~\ref{lem:clebsch-count}. The final
assertion follows since
$12\,|G| = \tfrac{12}{625}\,|G|\,\Delta(G)^4$ when $\Delta(G) = 5$.
\end{proof}

\begin{remark}
\label{rem:per-vertex-sixty}
The bound of Lemma~\ref{lem:sixty} does not by itself force the
Clebsch structure. Its equality conditions describe a recipe: take a
triangle-free cubic graph $F$ on ten vertices together with
$2$-subsets $A_z \subseteq \{1, \dots, 5\}$ for $z \in V(F)$, such
that adjacent vertices receive disjoint sets and each element lies in
exactly four of them; form $B(F)$ from a vertex $v$, five vertices
$a_1, \dots, a_5$ joined to $v$, and a copy of $F$, joining $a_i$ to
$z$ exactly when $i \in A_z$. Then $B(F)$ is $5$-regular ($1 + 4$ at
each $a_i$, $2 + 3$ at each $z$) and triangle-free --- the
triangle-freeness of $F$, the disjointness of labels across edges,
and the independence of $\{a_1, \dots, a_5\}$ together exclude
triangles --- and
$P(B(F), v) = \sum_{zw \in E(F)} |A_z||A_w| = 4 \cdot 15 = 60$ by
Lemma~\ref{lem:per-vertex-identity}.

For the pentagonal prism --- outer cycle $x_1 \cdots x_5$, inner
cycle $y_1 \cdots y_5$, spokes $x_iy_i$ --- the labelling
\[
(A_{x_1}, \dots, A_{x_5}) = (12,\ 34,\ 51,\ 23,\ 45),
\qquad
(A_{y_1}, \dots, A_{y_5}) = (34,\ 15,\ 23,\ 45,\ 12)
\]
(writing $ij$ for $\{i, j\}$) meets the requirements, yet $B(F)$ is
not the Clebsch graph: $x_2$ and $y_1$ are non-adjacent with the four
common neighbours $a_3$, $a_4$, $x_1$ and $y_2$, whereas every
non-edge of $\mathrm{Cl}$ has exactly two common neighbours. The
hypothesis of Lemma~\ref{lem:rigidity} at every vertex is therefore
necessary, just as Lemma~\ref{lem:per-vertex-tight} rules out a
purely local route to Theorem~\ref{thm:simple}. An exhaustive
enumeration of the recipe's inputs (\nameref{sec:lean-and-empirical})
produces, besides the Clebsch graph, exactly three graphs up to
isomorphism, of which the prism example is one.
\end{remark}

\subsection{\texorpdfstring{Empirical evidence for the constant $12/625$}{Empirical evidence for the constant 12/625}}

With $\Delta \leq 5$ settled by Theorems~\ref{thm:delta5}
and~\ref{thm:small-degree}, Conjecture~\ref{conj:pentagon} remains open
only for $\Delta \geq 6$. There a computational search finds no ratio
$P(G)/(|G|\Delta(G)^4)$ above $12/625$ and no maximiser other than
$\mathrm{Cl}$; \nameref{sec:lean-and-empirical} records its scope. The top three triangle-free strongly regular graphs
by pentagon density are Clebsch $12/625 = 0.0192$, Higman--Sims
$0.01893$, and $M_{22}$ at $0.01758$. By
Lemma~\ref{lem:clebsch}, the Clebsch-blowup family attains $12/625$
at every $\Delta = 5k$, which rules out any candidate conjecture
with strictly smaller constant.

\subsection{Why size 8 plateaus: a heuristic}
\label{sec:vandermonde-floor}

The size-$8$ SDP we solve attains $0.02073$, about $8\%$ above the
Clebsch ratio $0.01920$, and we know no smaller size-$8$ certificate.
The following heuristic indicates why the squared-norm cone is
unlikely to do much better without new ingredients.

Each Cauchy--Schwarz cone element $\llbracket f^2 \rrbracket \in
\SemCone^\emptyset$ of inner size $k$ inherits the slack of the
binomial product inequality
\begin{equation}
\label{eq:vandermonde}
\binom{a}{k}\binom{b}{k}
\ \leq\ \binom{a+b}{k}^2 \cdot 4^{-k},
\end{equation}
which follows from $\binom{2a}{k} \geq 2^k\binom{a}{k}$ and is tight
only at $k = 1$ with $a = b$; for $k \geq 2$ the factor $4^{-k}$ overstates the
loss. At flag size $n$ the inner sizes are at most
$\lfloor n/2 \rfloor$, so $4^{-\lfloor n/2 \rfloor}$ is only an
order-of-magnitude proxy for the squared-norm slack --- about
$4^{-4} \approx 0.004$ at $n = 8$, the same scale as the gap. We do
not read this as a proof that no size-$8$ program beats $0.02073$;
rather, it suggests that closing the gap calls for cone elements
beyond single-type squared norms (for instance $\llbracket fg
\rrbracket$ with $f, g$ of distinct types) or a genuine use of the
regularity $|B(G)| = \Delta(G)$, rather than a re-tuning of the
size-$8$ program. Raising the flag size to $n = 10$ shrinks the proxy
to $4^{-5} \approx 0.001$, in principle small enough to reach
$12/625$.

Theorem~\ref{thm:delta5} reaches $12/625$ at $\Delta = 5$ from
outside the squared-norm cone, by an exact local argument rather than
a certificate, where this slack does not arise.

\section{A second illustration: a sparsity lemma}
\label{sec:bruhn-joos}

The framework is not tied to the pentagon problem. We close the main
development with a brief second illustration, on a local quantity central
to a different extremal question --- the strong chromatic index ---
which we pursue in depth in the companion paper~\cite{daveySECLocalFlags2024}.

A \emph{strong edge-colouring} of a graph $G$ colours its edges so that
each colour class is an induced matching; the least number of colours,
the \emph{strong chromatic index} $\chi'_s(G)$, equals the chromatic
number of the square $L(G)^2$ of the line graph --- the graph on the
edges of $G$ in which two edges are adjacent when they share a vertex or an edge
joins them. Writing $\Delta = \Delta(G)$, the maximum degree of $L(G)^2$
is at most $2\Delta^2 - 2\Delta$, so greedy colouring gives
$\chi'_s(G) \le 2\Delta^2 - 2\Delta + 1$; every improvement on this
exploits that the neighbourhood of an edge in $L(G)^2$ is \emph{sparse}.
Optimising a result of Molloy and Reed~\cite{molloyBoundStrongChromatic1997}, Bruhn and Joos~\cite{bruhnStrongerBoundStrong2018} made this precise: for
every graph $G$ and every edge $e$, the neighbourhood of $e$ in $L(G)^2$
induces at most $\tfrac{3}{2}\Delta^4 + 5\Delta^3$ edges of $L(G)^2$,
against the ${\sim}\,2\Delta^4$ of a clique on the same ${\sim}\,2\Delta^2$
vertices; the constant $\tfrac{3}{2}$ is asymptotically best possible.

Counting the edges of $L(G)^2$ in the strong neighbourhood of a reference
edge is a local quantity, and the local flag algebra captures it exactly
as the reduction of Section~\ref{sec:simple} captures the pentagons
through a vertex. Mark the neighbourhood of the reference edge
\emph{black}; since its two endpoints have at most $2\Delta$ neighbours
between them, the black set has size at most $2\Delta$. The relevant flags
are then the local flags of the $2$-coloured class --- those with a black
vertex in every component (Lemma~\ref{lem:pentagon-local}) --- and the
objective is the size-$4$ functional $O$ whose value on a $4$-vertex flag
is $\tfrac{1}{24}$ times the number of ways to split it into two disjoint
edges, adjacent in $L(G)^2$, each carrying a black endpoint. By
Definition~\ref{def:rho} this is the local density of a pair of adjacent
edges of $L(G)^2$ meeting the marked neighbourhood --- the Bruhn--Joos
edge count, normalised by $\Delta^4$. The black set of size at most
$2\Delta$ gives the degree budgets
\[
\langle 1\rangle \le 2, \qquad \langle 2\rangle \le 4, \qquad
\langle 3\rangle \le 8,
\]
where $\langle k\rangle$ is the local density of an all-black $k$-set ---
the size-$4$ counterpart of the black-vertex normalisation~\eqref{eq:b1}
of the pentagon program. Together with the regularity constraints of
Section~\ref{sec:localflags}, these bound the limit functional
$\phi(O) \le \tfrac{3}{2}$ through the size-$4$ semidefinite program,
recovering the Bruhn--Joos constant. Its value $\tfrac{3}{2}$, rather than
the $2$ of the maximum degree, confirms that the program measures the
edge density and not the degree.

The locality is of Benjamini--Schramm type, made concrete by sampling:
for a uniform random vertex of $L(G)^2$ --- a random edge of $G$ --- the
induced edge count in its neighbourhood depends only on a bounded-radius
view, so the size-$4$ program bounds this local statistic. As in the
introduction, the analogy is only partial: the maximum degree here grows,
and the bound holds uniformly along it.


\subsection*{Certificate generation, formalisation, and empirical evidence}
\phantomsection
\label{sec:lean-and-empirical}

The semidefinite-programming certificates behind the two bounds are
produced by a Rust crate built on \texttt{rust-flag-algebra}: it
assembles the size-$5$ SDP underlying Theorem~\ref{thm:simple} and the
size-$8$ SDP underlying Theorem~\ref{thm:tight}, solves them numerically
with the CSDP or SDPA-LR solvers, and rationalises each solved
certificate into the Lean source consumed by the formalisation. A
separate example in the same crate, \texttt{bruhn\_joos}, solves the
size-$4$ program of Section~\ref{sec:bruhn-joos} and returns its sparsity
constant $\tfrac{3}{2}$.

The entire local flag algebra framework and all of the four theorems stated in the
introduction are formalised in Lean~4. The second illustration of
Section~\ref{sec:bruhn-joos}, by contrast, is not formalised: it
essentially reproduces a result of Bruhn and Joos, and no later result in our development depends on it.

The framework of Section~\ref{sec:localflags} is machine-checked in
full: the local flag product and its product-limit identity, the
existence and algebra-homomorphism property of the limit functionals,
the averaging operator together with its asymptotic positivity
preservation, and the weak-duality step of the semidefinite method are
all proved from the standard kernel axioms, carrying no domain
assumptions.
The simple bound (Theorem~\ref{thm:simple}), the Clebsch-blowup
tightness lemma (Lemma~\ref{lem:clebsch}), the Clebsch characterisation
at maximum degree five (Theorem~\ref{thm:delta5}, bound and equality
case), and the small-degree results (Theorem~\ref{thm:small-degree}, in
every part) carry no domain axioms --- with
Lemma~\ref{lem:clebsch} and the elementary arguments of
Theorems~\ref{thm:delta5} and~\ref{thm:small-degree} avoiding even the
two compiled-evaluation axioms. Only the tighter bound (Theorem~\ref{thm:tight}) rests on
further hypotheses: two named domain axioms, the basis combinatorial
identity and the certificate output bound of the size-$8$
certificate (Section~\ref{sec:certificates}), each a finite arithmetic
identity that the certificate exhibits.

The formalised proofs differ sharply in length, and the difference is
one of verification standard. That
of Theorem~\ref{thm:simple} is by far the longest --- about $51{,}000$
lines, some $41{,}000$ of them a generic extension-operator substrate and
the rest the five $\sigma$-cone positivity proofs and their assembly. It
is also the most stringent: the size-$5$ certificate is discharged
entirely inside Lean, each semidefinite positivity constraint proved as a
theorem, so the bound rests on no domain axioms at all.
Theorem~\ref{thm:tight} instead \emph{exhibits} its size-$8$ certificate:
two named domain axioms record the basis combinatorial identity
and the certificate output bound, whose block-by-block $LDL^\top$ positivity
\texttt{native\_decide} verifies as a finite computation, and no general
positivity theorem is proved; the surrounding bridge is about $11{,}000$
lines. Theorem~\ref{thm:simple}
thus shows that a local flag algebra certificate can be reduced to the
kernel with its positivity proved outright, while the tighter bound
trades two exhibited, machine-checked axioms for a proof an order of
magnitude shorter --- a trade only more favourable at size $8$, where a
first-principles positivity proof would be larger still.
For nearly all readers, the difference in verification standard is unnoticeable, and both successes corroborate the robustness of our implementation workflow.

For the open range $\Delta \geq 6$, an enumeration of the
$767$ connected triangle-free $\Delta$-regular graphs of order at most
$22$ with $\Delta \in \{6, 7\}$ finds no ratio $P(G)/(|G|\,\Delta(G)^4)$
above $12/625$. Two further computational searches, spanning more maximum degrees,
confirm that the Clebsch graph is the unique maximiser at ratio
$12/625$: a named catalogue of $195$ graphs (the seven triangle-free
strongly regular graphs, generalised Petersen and Kneser graphs, Levi
graphs of finite geometries, folded hypercubes, and small Cayley
graphs), and the $20{,}450$ triangle-free graphs of order at most $200$
in the House of Graphs~\cite{coolsaetHouseOfGraphs2023}. The small-degree
\texttt{geng} enumerations also produced the auxiliary graphs cited
earlier: the equality configurations of Lemma~\ref{lem:sixty}, the
non-Petersen ten-vertex graph of
Remark~\ref{rem:non-petersen-ten}, and the eleven-vertex witness of
Remark~\ref{rem:delta4-gap}. For Lemma~\ref{lem:sixty}, of the six
triangle-free cubic graphs on ten vertices, four give graphs $B(F)$ in
four isomorphism classes: the Clebsch graph, and three others with
$96$, $117$, and $160$ pentagons respectively, the prism of
Remark~\ref{rem:per-vertex-sixty} being the one with $117$.

The Lean formalisation, the Rust certificate generator, the certificate
data, and the search scripts are all available
at~\cite{daveyLocalFlags2024Repo}. The repository also provides a
side-by-side correspondence (\texttt{RESULTS.md}) mapping every result
of this paper to its Lean statement and the exact axiom set it depends
on.

\subsection*{AI usage declaration}
\phantomsection
\label{sec:AI-usage}

The results of this paper were obtained in three main phases, in 2019--2020, in 2023--2024, and then in 2026.
The local flags framework itself and the two applications (Theorems~\ref{thm:simple} and~\ref{thm:tight}) were obtained in the first two of these phases, well before any significant adoption of AI methods for mathematics.
The principal codebase for local flags was developed in the first of these periods and then expanded upon in the second, and neither coding effort used AI assistance.
These results were made publicly accessible in 2024 via Eoin Davey's MSc thesis~\cite{daveyLocalFlags2024} at the University of Amsterdam theses repository.

During the third phase, one commercially available agentic AI system was used for the following purposes:
\begin{enumerate}
\item formal verification of the mathematical results in Lean~4;
\item empirical checks to sweep for potential counterexample graphs;
\item proof of subsidiary results (Theorems~\ref{thm:delta5} and~\ref{thm:small-degree}) under our guidance; and
\item drafting and refining the exposition of the paper, using the text of Eoin Davey's MSc thesis~\cite{daveyLocalFlags2024} as a core basis.
\end{enumerate}

\subsection*{Acknowledgements}

This paper is based in part on Eoin Davey's MSc
thesis~\cite{daveyLocalFlags2024} at the University of Amsterdam;
he thanks the Korteweg--de~Vries Institute for Mathematics for
hosting the project. R\'emi de Joannis de Verclos and Ross Kang
were partially supported by a Vidi grant (639.032.614) of the
Netherlands Organisation for Scientific Research (NWO) while at
Radboud University, and both would like to thank Louis Esperet for helpful discussions (well) over a decade ago. Eoin Hurley and Ross Kang were partially
supported by the Gravitation Programme NETWORKS (024.002.003) of
the Dutch Ministry of Education, Culture and Science (OCW) while at the University of Amsterdam. Ross Kang
was additionally partially supported by the NWO Open Competition
grant OCENW.M20.009.

\subsection*{Open access statement}

For the purpose of open access, a CC BY public copyright license is
applied to any Author Accepted Manuscript (AAM) arising from this
submission.

\printbibliography

\appendix

\section{Certificate data: a worked block}
\label{app:cert-data}

To make the per-block data of Section~\ref{subsec:cert-per-block}
concrete, we trace the contribution of block $0$, the smallest block by
inner dimension, from its rationalised Gram matrix to its contribution
as one cone summand in~\eqref{eq:size8-decomp}. We record the remaining
blocks and the full numeric data with the
formalisation~\cite{daveyLocalFlags2024Repo}.

Block $0$'s parameters are: $\sigma$-type $\sigma_0$, a $6$-vertex
local type with skeleton a 6-cycle on labels $0, \dots, 5$ in cyclic
order with $2$ labels coloured $B$ and $4$ coloured $R$; outer size
$n(0) = 7$; inner basis dimension $d(0) = 22$; Tikhonov shift
$\lambda_0 = 1/10^{11}$; shared denominator $s_0 \approx 5.806 \times
10^{800}$. The inner basis $\mathrm{basis}^{(0)} =
(\mathrm{basis}^{(0)}_0, \dots, \mathrm{basis}^{(0)}_{21})$ consists
of the $22$ distinct $\sigma_0$-flags of size $7$ in $\Glocsub{7}^{\sigma_0}$:
we obtain each by attaching one unlabelled vertex --- coloured $R$
or $B$ --- to $\sigma_0$ with all triangle-free adjacency patterns
that the class $\Gcl$ permits. All $22$ are local
$\sigma_0$-flags by Lemma~\ref{lem:pentagon-local}.

The rationalised Gram matrix $\widehat{Y}_0 := Y_0 + \lambda_0\cdot I_{22}$,
written at integer scale, has smallest diagonal entry $134\,259$
at position $(21,21)$ and the largest $337\,127$ at $(1,1)$ and
$(3,3)$; the largest off-diagonal entry in absolute value is
$|\widehat{Y}_0[1,4]| = |\widehat{Y}_0[2,3]| = 241\,482$. Direct expansion of the matrix product verifies its LDL identity at
integer scale,
\begin{equation}
\label{eq:block0-ldl}
L_0 \cdot \operatorname{diag}(D_0) \cdot L_0^\top
= s_0 \cdot \widehat{Y}_0
= s_0 \cdot (Y_0 + \lambda_0\cdot I_{22}).
\end{equation} The integer pivot
vector $D_0$ has all entries positive: the rational pivots
$D_0[k]/s_0$ span from $\approx 1.0 \times 10^{-11}$ at $k = 21$
(smallest) to $\approx 2.5 \times 10^{-7}$ at $k = 1$ (largest).

Since~\eqref{eq:block0-ldl} holds with $D_0 > 0$ and $s_0 > 0$, the
matrix $\widehat{Y}_0$ is PSD, and the contribution of block $0$ is
\[
\mathrm{csBlock}_0
= \biggl\llbracket\sum_{k=0}^{21} \tfrac{D_0[k]}{s_0}
\mathrm{col}_k^{(0)} \cdot \mathrm{col}_k^{(0)}\biggr\rrbracket
\in \SemCone^\emptyset.
\]
In density terms, for every $\phi \in \Phi^\emptyset$,
\[
\sum_{k=0}^{21} \tfrac{D_0[k]}{s_0} \cdot
\biggl(\sum_{p=0}^{21} L_0[p,k] \cdot
\phi(\downflag{\mathrm{basis}^{(0)}_p})\biggr)^2 \geq 0,
\]
an SDP-shaped quadratic inequality on the $22$ size-$7$ flag densities
$\phi(\downflag{\mathrm{basis}^{(0)}_p})$.

\section{Pentagon extrema at small maximum degree}
\label{app:small-degree}

We prove Theorem~\ref{thm:small-degree}. The argument is the per-vertex
count of Section~\ref{sec:delta5}, run with a weight function tuned to
the degree. Throughout, $G$ is triangle-free, $v \in V(G)$, and $F_v$
is the subgraph induced on the non-neighbours of $v$; for
$x \in V(F_v)$ we write $A_x := N(x) \cap N(v)$ and $k_x := |A_x|$. As
in Section~\ref{sec:delta5}, adjacent $x, y \in V(F_v)$ have disjoint
attachment sets, so $k_x + k_y \leq |N(v)|$, and
Lemma~\ref{lem:per-vertex-identity} gives
$P(G,v) = \sum_{xy \in E(F_v)} k_x k_y$.

\subsection{Maximum degree three}

The \emph{Petersen graph} $\mathrm{Pet}$ is the Kneser graph $K(5,2)$:
its vertices are the $2$-subsets of $\{1, \dots, 5\}$, two adjacent when
disjoint. Equivalently, $\mathrm{Pet}$ is the unique
$\mathrm{SRG}(10,3,0,1)$~\cite{brouwerDistanceRegularGraphs1989} --- it
is $3$-regular on ten vertices, triangle-free, and every non-adjacent
pair has exactly one common neighbour --- with spectrum
$\{3, 1^5, (-2)^4\}$.

\begin{lemma}
\label{lem:petersen-count}
$\mathrm{Pet}$ contains exactly $12$ induced copies of $C_5$.
\end{lemma}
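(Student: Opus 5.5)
The proof follows the spectral trace argument of Lemma~\ref{lem:clebsch-count} almost verbatim, using the stated spectrum $\{3, 1^5, (-2)^4\}$ of $\mathrm{Pet}$. Let $A$ be the adjacency matrix. The first step computes
\[
\trace(A^5) = 3^5 + 5\cdot 1^5 + 4\cdot(-2)^5 = 243 + 5 - 128 = 120 .
\]

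The second step checks that every closed $5$-walk in $\mathrm{Pet}$ traces an induced pentagon. Take a closed walk $v_0v_1v_2v_3v_4v_0$ of odd length $5$. If it repeats a vertex, it splits into two shorter closed walks of total length $5$, one of which has odd length $1$ or $3$. A closed walk of length $1$ is a loop, which $\mathrm{Pet}$ does not have. A closed walk of length $3$ on a simple graph with no loops must be a triangle. Triangle-freeness rules out both cases, so the five vertices are distinct and the walk is a $5$-cycle.

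A $5$-cycle in a triangle-free graph is automatically induced. A chord of a $5$-cycle joins two vertices at cyclic distance $2$, and together with the common cycle neighbour this closes a triangle. So closed $5$-walks are exactly traversals of induced pentagons.

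Each induced pentagon is traversed $10$ times, from $5$ starting vertices in $2$ directions. Hence the number of pentagons is $\trace(A^5)/10 = 12$, giving $P(\mathrm{Pet}) = 12$.

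The only step needing any care is ruling out degenerate closed walks, and the argument above handles it in one line. As a sanity check that avoids relying on the quoted spectrum, one can count directly. By Lemma~\ref{lem:per-vertex-identity}, for each vertex $v$ the set $F_v$ has $6$ vertices, each with $k_x = 1$ because of the $\mathrm{SRG}(10,3,0,1)$ parameters. The graph $F_v$ has $15 - 3 - 6 = 6$ edges. This gives $P(\mathrm{Pet},v) = 6$ for every $v$, so $P(\mathrm{Pet}) = 10\cdot 6/5 = 12$, in agreement with the trace computation.
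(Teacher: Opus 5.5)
Your proof is correct and is the paper's argument: compute the trace of $A^5$ from the spectrum $\{3,1^5,(-2)^4\}$ and divide by $10$. Your odd-closed-walk splitting makes explicit the step the paper only asserts, namely that triangle-freeness turns every closed $5$-walk into a traversal of an induced pentagon; your per-vertex cross-check via Lemma~\ref{lem:per-vertex-identity} ($k_x=1$ on six vertices, six edges in $F_v$, so $P(\mathrm{Pet},v)=6$) is also correct and gives a spectrum-free confirmation.
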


\begin{proof}
As in Lemma~\ref{lem:clebsch-count}, triangle-freeness makes every
closed $5$-walk a pentagon traversed in one of ten ways, so the number
of pentagons is $\trace(A^5)/10$. From the spectrum,
$\trace(A^5) = 3^5 + 5\cdot 1^5 + 4\cdot(-2)^5 = 243 + 5 - 128 = 120$,
so $\mathrm{Pet}$ has $120/10 = 12$ pentagons.
\end{proof}

\begin{lemma}
\label{lem:six}
$P(G,v) \leq 6$ for every vertex of a triangle-free graph $G$ with
$\Delta(G) \leq 3$. If $P(G,v) = 6$, then $\deg(v) = 3$, every neighbour
of $v$ has degree $3$, every $x \in V(F_v)$ has $k_x \in \{0,1\}$, and
each $x$ with $k_x = 1$ has exactly two neighbours in $F_v$, both with
$k = 1$.
\end{lemma}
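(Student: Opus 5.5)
We follow the weighted per-vertex count of Lemma~\ref{lem:sixty}, now tuned to $\Delta \leq 3$. By Lemma~\ref{lem:per-vertex-identity}, $P(G,v) = \sum_{xy \in E(F_v)} k_xk_y$. Adjacent non-neighbours have disjoint attachment sets, so every edge $xy$ of $F_v$ satisfies $k_x + k_y \leq |N(v)| \leq 3$. The $k_x$ neighbours of $x$ in $N(v)$ lie outside $F_v$, so each $x \in V(F_v)$ has $\deg_{F_v}(x) \leq 3 - k_x$. Finally, double counting pairs $(x,a)$ with $a \in A_x$ gives the capacity bound
\[
\sum_{x \in V(F_v)} k_x \;=\; \sum_{a \in N(v)} |N(a) \cap V(F_v)| \;\leq\; \sum_{a \in N(v)} \bigl(\deg(a) - 1\bigr) \;\leq\; 3 \cdot 2 = 6 .
\]

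We need a weight $w\colon\{0,1,2,3\} \to \mathbb{Q}_{\geq 0}$ satisfying two inequalities:
\[
pq \leq w(p) + w(q) \quad (p + q \leq 3), \qquad (3-k)\,w(k) \leq k \quad (0 \leq k \leq 3).
\]
The token condition at $k=1$ gives $w(1) \leq \tfrac12$, while the pair $(1,1)$ gives $w(1) \geq \tfrac12$; so $w(1) = \tfrac12$ is forced. The pair $(1,2)$ then requires $w(2) \geq \tfrac32$, and the token condition at $k=2$ allows up to $w(2) \leq 2$. We take
\[
\bigl(w(0), w(1), w(2), w(3)\bigr) = \bigl(0,\ \tfrac12,\ \tfrac32,\ 0\bigr).
\]
A finite check confirms the pair inequality for $p + q \leq 3$. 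It also confirms the token inequality, with equality exactly at $k \in \{0,1\}$: the left side runs $0, 1, \tfrac32, 0$ against the right side $0, 1, 2, 3$.

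The same chain as in~\eqref{eq:chain-pair}--\eqref{eq:chain-token} now gives
\[
P(G,v) \;\leq\; \sum_{x} \deg_{F_v}(x)\, w(k_x) \;\leq\; \sum_{x} (3 - k_x)\, w(k_x) \;\leq\; \sum_{x} k_x \;\leq\; 6 .
\]

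For the equality case, suppose $P(G,v) = 6$, so every comparison above is tight.
\begin{itemize}
\item Tightness of the capacity bound forces $|N(v)| = 3$ and $|N(a) \cap V(F_v)| = 2$ for each $a \in N(v)$. Hence $\deg(a) = 3$.
\item Tightness of the token inequality at every $x$ forces $k_x \in \{0,1\}$.
\item Tightness of the degree step wherever $w(k_x) > 0$ gives $\deg_{F_v}(x) = 2$ when $k_x = 1$.
\item Tightness of the pair inequality on every edge excludes edges with $\{k_x, k_y\} = \{0,1\}$, because there $0 < \tfrac12$. Together with $k_x \in \{0,1\}$, this shows both $F_v$-neighbours of a vertex with $k=1$ themselves have $k = 1$.
\end{itemize}

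The only real work is choosing $w$; the constraints above essentially determine it. The remainder is bookkeeping identical to Lemma~\ref{lem:sixty}, and I expect no obstacle beyond verifying the small inequality tables.
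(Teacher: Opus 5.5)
Your proof is correct and follows the paper's weighted per-vertex count: the same identity, pair inequality, degree bound and capacity bound, with termwise equality in the extremal case. The only difference is the weight. The paper uses $w = (0,\tfrac12,2,0)$, which makes the token inequality tight at $k=2$ as well, so it needs an extra edge-equality argument to exclude $k_x = 2$. Your $w(2) = \tfrac32$ still satisfies the binding pair $(1,2)$ but makes the token inequality strict at $k=2$, so $k_x \in \{0,1\}$ follows directly — a small streamlining of the equality analysis.
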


\begin{proof}
Put $w = (0, \tfrac12, 2, 0)$ on $\{0,1,2,3\}$. Then
$pq \leq w(p) + w(q)$ for $p + q \leq 3$ (with equality at $(1,1)$),
and $(3-k)\,w(k) \leq k$ for $0 \leq k \leq 3$ (with equality at
$k \in \{0,1,2\}$). Each $x \in V(F_v)$ has
$\deg_{F_v}(x) \leq \deg_G(x) - k_x \leq 3 - k_x$, and
\[
\sum_{x \in V(F_v)} k_x
= \sum_{a \in N(v)} |N(a) \cap V(F_v)|
\leq \sum_{a \in N(v)} (\deg(a) - 1)
\leq 3 \cdot 2 = 6 .
\]
Since $k_x + k_y \leq |N(v)| \leq 3$ on each edge of $F_v$ and
$w \geq 0$,
\begin{align*}
P(G,v) = \sum_{xy \in E(F_v)} k_x k_y
&\leq \sum_{xy \in E(F_v)} \bigl(w(k_x) + w(k_y)\bigr)
= \sum_{x \in V(F_v)} \deg_{F_v}(x)\, w(k_x) \\
&\leq \sum_{x \in V(F_v)} (3 - k_x)\, w(k_x)
\leq \sum_{x \in V(F_v)} k_x
\leq 6 .
\end{align*}

Suppose $P(G,v) = 6$, so every inequality above is an equality. The
capacity bound forces $\deg(v) = 3$ and $\deg(a) = 3$ for each
$a \in N(v)$, and $\sum_x k_x = 6$. Equality
$\sum_x (3 - k_x)w(k_x) = \sum_x k_x$ forces $(3 - k_x)w(k_x) = k_x$ for
every $x$, hence $k_x \leq 2$ (a vertex with $k_x = 3$ would contribute
$0$ on the left and $3$ on the right). Equality
$\sum_x \deg_{F_v}(x)w(k_x) = \sum_x (3 - k_x)w(k_x)$ then forces
$\deg_{F_v}(x) = 3 - k_x$ whenever $w(k_x) > 0$, that is for
$k_x \in \{1,2\}$. Finally, equality $k_x k_y = w(k_x) + w(k_y)$ on
every edge $xy \in E(F_v)$ excludes $k = 2$: a vertex $x$ with
$k_x = 2$ would have $\deg_{F_v}(x) = 1$, and its sole $F_v$-neighbour
$y$ would satisfy $k_y \leq |N(v)| - k_x = 1$, yet
$2 k_y = w(2) + w(k_y) = 2 + w(k_y)$ has no solution with
$k_y \in \{0,1\}$. Hence $k_x \in \{0,1\}$; and for $k_x = 1$ the same
edge equality $k_y = w(1) + w(k_y) = \tfrac12 + w(k_y)$ forces each of
the two $F_v$-neighbours of $x$ to have $k = 1$.
\end{proof}

\begin{lemma}
\label{lem:petersen-rigidity}
Let $G$ be triangle-free with $\Delta(G) \leq 3$ and $P(G,u) = 6$ for
every $u \in V(G)$. Then every component of $G$ is isomorphic to
$\mathrm{Pet}$.
\end{lemma}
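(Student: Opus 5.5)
We follow the template of Lemma~\ref{lem:rigidity}, now driven by Lemma~\ref{lem:six}. Fix $v$ and let $X=\{x\in V(F_v): k_x=1\}$. Lemma~\ref{lem:six} at $v$ gives $\deg(v)=3$, every $a\in N(v)$ of degree $3$, and $\sum_x k_x=6$ with all $k_x\in\{0,1\}$. Hence $|X|=6$, and each $a\in N(v)$ has exactly two $F_v$-neighbours, both in $X$. Moreover, each $x\in X$ has exactly two $F_v$-neighbours, both in $X$, so $x$ has degree $3$: one neighbour $a$ in $N(v)$ and two in $X$. Thus $G[X]$ is $2$-regular on six vertices and triangle-free, so it is either a $6$-cycle or two disjoint triangles. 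The second option is excluded by triangle-freeness, so $G[X]=C_6$. Adjacent vertices of $X$ have disjoint attachment sets, that is, different labels in $N(v)=\{a_1,a_2,a_3\}$. Each label is carried by exactly two members of $X$.

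The next step pins down the labelling of the hexagon. I will use the consequence of the hypothesis at other vertices, as in Lemma~\ref{lem:rigidity}. Equality in Lemma~\ref{lem:six} applied at a root $u$ forces $k\in\{0,1\}$ at $u$. Hence any two distinct non-adjacent vertices have at most one common neighbour. Consider the two members $x,x'$ of $X$ attached to $a_1$. They are non-adjacent, since they share a label, and they already have the common neighbour $a_1$. So they have no common neighbour in $X$, which means they are not at distance $2$ on the hexagon. Since same-label vertices are also not adjacent on the hexagon, they must be antipodal on the $6$-cycle. Therefore the labelling is $a_1,a_2,a_3,a_1,a_2,a_3$ up to relabelling and rotation. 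Here I must also rule out the alternative pattern $a_1,a_2,a_1,\dots$, which is the distance-$2$ case just excluded. I expect this rigidity step to be the main obstacle, although it looks short.

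Let $H$ be the induced subgraph on $v$, $a_1,a_2,a_3$ and $X$, which has $10$ vertices. Every vertex of $H$ has degree $3$ already inside $H$. Since $\Delta(G)\le3$, no edge leaves $H$, and $H$ is connected through $v$, so $H$ is the component of $v$. Finally, I will exhibit an isomorphism with $\mathrm{Pet}=K(5,2)$. Write $\mathrm{Pet}$ as an outer $5$-cycle and an inner pentagram. Take $v$ to be a vertex, $a_i$ its three neighbours, and note that the six non-neighbours of $v$ induce a $6$-cycle on which each $a_i$'s two further neighbours sit antipodally. It is cleanest to verify this directly in $K(5,2)$. Take $v=\{4,5\}$ and $a_i$ the pairs inside $\{1,2,3\}$. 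The non-neighbours of $v$ are the pairs $\{i,j\}$ with $i\le3<j$, which form a hexagon with matching attachments. Since $v$ was arbitrary, every component is a copy of $\mathrm{Pet}$.
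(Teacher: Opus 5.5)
Your proof is correct, but it ends differently from the paper's.

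\textbf{The paper's route.} It first applies Lemma~\ref{lem:six} at every root. This shows that $G$ is $3$-regular and that non-adjacent vertices have at most one common neighbour. Together with triangle-freeness, this gives girth at least five. It then shows, exactly as you do, that $\{v\}\cup N(v)\cup X$ is a ten-vertex component. It concludes by citing the uniqueness of the $(3,5)$-Moore graph.

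\textbf{Your route.} You determine the component's structure by hand. $G[X]$ is a triangle-free $2$-regular graph on six vertices, hence $C_6$. You use the at-most-one-common-neighbour fact only for the two carriers of a single label. It forces them to be antipodal, so the labelling around the hexagon is $a_1a_2a_3a_1a_2a_3$. You then match this with the structure around $\{4,5\}$ in $K(5,2)$.

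\textbf{Comparison.} The paper's argument is shorter but relies on a cited classification. Yours is self-contained and mirrors the explicit folded-cube identification in Lemma~\ref{lem:rigidity}. It also uses the hypothesis at other roots more sparingly.

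\textbf{Write-up point.} State explicitly that the degree counts saturate every vertex of $H$. Then the edges $va_i$, the attachment edges $a_ix$, and the six hexagon edges are all the edges of $H$. That is what makes the correspondence with $K(5,2)$ a full isomorphism rather than just an embedding of a spanning subgraph.
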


\begin{proof}
Lemma~\ref{lem:six} at each vertex makes $G$ $3$-regular. Moreover any
two non-adjacent vertices have at most one common neighbour: if $u, w$
are non-adjacent then $w \in V(F_u)$, and Lemma~\ref{lem:six} at $u$
gives $|N(u) \cap N(w)| = k_w \in \{0,1\}$. With triangle-freeness this
forbids $C_3$ and $C_4$, so $G$ has girth at least five.

Fix $v$ and let $X = \{x \in V(F_v) : k_x = 1\}$. By
Lemma~\ref{lem:six}, $\sum_x k_x = 6$ with every $k_x \in \{0,1\}$, so
$|X| = 6$, and each $x \in X$ has exactly two $F_v$-neighbours, both in
$X$. The six attachments are shared among the three vertices of $N(v)$,
each receiving at most $\deg(a) - 1 = 2$, hence exactly two; so each
$a \in N(v)$ has two neighbours in $X$. Set
$Y = \{v\} \cup N(v) \cup X$, of size ten. Within $Y$ the vertex $v$
has degree $3$, each $a \in N(v)$ has degree $1 + 2 = 3$, and each
$x \in X$ has degree $2 + 1 = 3$; since $G$ is $3$-regular, no edge
leaves $Y$, and $Y$ is connected through $v$, so $Y$ is a component. It
is a $3$-regular graph on ten vertices of girth at least five, hence
the $(3,5)$-Moore graph, which is
$\mathrm{Pet}$~\cite{brouwerDistanceRegularGraphs1989}. As $v$ was
arbitrary, every component of $G$ is a copy of $\mathrm{Pet}$.
\end{proof}

\begin{proof}[Proof of Theorem~\ref{thm:small-degree}(i)]
Since $\sum_{v} P(G,v) = 5\,P(G)$, Lemma~\ref{lem:six} gives
$5\,P(G) \leq 6\,|G|$, i.e.\ $P(G) \leq \tfrac65|G|$. If equality
holds, each of the $|G|$ terms $P(G,v)$ equals $6$, and
Lemma~\ref{lem:petersen-rigidity} shows every component is a copy of
$\mathrm{Pet}$. Conversely, if $G$ has $m$ components each isomorphic to
$\mathrm{Pet}$, then $|G| = 10m$ and $P(G) = 12m = \tfrac65|G|$ by
Lemma~\ref{lem:petersen-count}.
\end{proof}

\begin{remark}
\label{rem:non-petersen-ten}
As with the Clebsch bound (Remark~\ref{rem:per-vertex-sixty}), the
per-vertex bound $P(G,v) \leq 6$ does not by itself force the Petersen
structure: there is a triangle-free cubic graph on ten vertices, other
than $\mathrm{Pet}$, with a single vertex on six pentagons
(\nameref{sec:lean-and-empirical}). The hypothesis $P(G,u) = 6$ at
every vertex in Lemma~\ref{lem:petersen-rigidity} is therefore
essential.
\end{remark}

\subsection{Maximum degree four}

\begin{lemma}
\label{lem:twentyfour}
$P(G,v) \leq 24$ for every vertex of a triangle-free graph $G$ with
$\Delta(G) \leq 4$.
\end{lemma}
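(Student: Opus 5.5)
The plan is to rerun the weight-function argument of Lemma~\ref{lem:sixty} and Lemma~\ref{lem:six}, now tuned to maximum degree four. Fix $v$ and use Lemma~\ref{lem:per-vertex-identity}, which gives $P(G,v)=\sum_{xy\in E(F_v)}k_xk_y$. Three facts from the setup feed in:
\begin{itemize}
\item adjacent $x,y\in V(F_v)$ have disjoint attachment sets, so $k_x+k_y\le |N(v)|\le 4$;
\item each $x\in V(F_v)$ has $\deg_{F_v}(x)\le 4-k_x$;
\item the capacity count gives $\sum_{x}k_x=\sum_{a\in N(v)}|N(a)\cap V(F_v)|\le 4\cdot 3=12$.
\end{itemize}
It then suffices to find $w\colon\{0,\dots,4\}\to\mathbb{Q}_{\ge 0}$ satisfying
\[
pq\le w(p)+w(q)\quad(p+q\le 4),\qquad (4-k)\,w(k)\le 2k\quad(0\le k\le 4).
\]
With such a $w$ the same chain as in~\eqref{eq:chain-pair}--\eqref{eq:chain-token} gives
\[
P(G,v)\le\sum_{x}\deg_{F_v}(x)\,w(k_x)\le\sum_x(4-k_x)\,w(k_x)\le 2\sum_x k_x\le 24 .
\]

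The only real work is choosing $w$, which is a small finite linear feasibility problem. The binding constraints are:
\begin{itemize}
\item the pair $(1,1)$ needs $w(1)\ge\tfrac12$, while the token constraint at $k=1$ needs $3w(1)\le 2$;
\item the pair $(2,2)$ needs $w(2)\ge 2$, while the token constraint at $k=2$ needs $2w(2)\le 4$, which forces $w(2)=2$;
\item the pair $(1,3)$ needs $w(1)+w(3)\ge 3$, while the token constraint at $k=3$ needs $w(3)\le 6$;
\item the pair $(1,2)$ needs $w(1)+w(2)\ge 2$.
\end{itemize}
Pairs involving $0$ are trivial since $w\ge 0$, and $k=4$ imposes nothing on the token side. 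I would take
\[
w=\bigl(0,\ \tfrac12,\ 2,\ 3,\ 0\bigr),
\]
and check every pair $p\le q$ with $p+q\le 4$ and every $k\in\{0,\dots,4\}$ by inspection. The checks are $1\le 1$, $2\le\tfrac52$, $3\le\tfrac72$ and $4\le 4$ for the pairs, and $0,\tfrac32,4,3,0$ against $0,2,4,6,8$ for the tokens.

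I do not expect a genuine obstacle. The one delicate point is confirming that the degree-$4$ weight constraints admit a solution at all, since the $(2,2)$ pair is tight against the $k=2$ token bound and leaves no room. Once $w$ is exhibited, the statement follows verbatim as in Lemma~\ref{lem:six}; no equality analysis is needed, since the lemma asserts only the bound.
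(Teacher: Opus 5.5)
Your proof is correct and is the paper's own argument: the same per-vertex identity, degree bound, capacity count, and chain
$P(G,v)\le\sum_x\deg_{F_v}(x)\,w(k_x)\le\sum_x(4-k_x)\,w(k_x)\le 2\sum_x k_x\le 24$.
The only difference is cosmetic: the paper takes $w=(0,\tfrac12,2,\tfrac52,0)$, which makes the pair $(1,3)$ tight, whereas your $w(3)=3$ also satisfies every pair and token constraint, as you verified.
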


\begin{proof}
Put $w = (0, \tfrac12, 2, \tfrac52, 0)$ on $\{0, \dots, 4\}$. Then
$pq \leq w(p) + w(q)$ for $p + q \leq 4$ (the binding pairs are
$(1,1)$, $(2,2)$ and $(1,3)$) and $(4-k)\,w(k) \leq 2k$ for
$0 \leq k \leq 4$. With $\deg_{F_v}(x) \leq 4 - k_x$,
$\sum_x k_x = \sum_{a \in N(v)} |N(a) \cap V(F_v)| \leq 4 \cdot 3 = 12$,
and $k_x + k_y \leq |N(v)| \leq 4$ on each edge of $F_v$,
\[
P(G,v) = \sum_{xy \in E(F_v)} k_x k_y
\leq \sum_{x \in V(F_v)} \deg_{F_v}(x)\, w(k_x)
\leq \sum_{x \in V(F_v)} (4 - k_x)\, w(k_x)
\leq 2\sum_{x \in V(F_v)} k_x
\leq 24 .
\qedhere
\]
\end{proof}

The circulant $C_{12}(2,3)$ on $\mathbb{Z}_{12}$, with $i$ adjacent to
$i \pm 2$ and $i \pm 3$, is $4$-regular and triangle-free --- no three
of the steps $\pm 2, \pm 3$ sum to $0 \bmod 12$. Its eigenvalues are
$2\cos(2\pi \cdot 2j/12) + 2\cos(2\pi \cdot 3j/12)$ for
$j = 0, \dots, 11$, namely $\{4,\, 1^6,\, 0,\, (-2)^2,\, (-3)^2\}$.

\begin{lemma}
\label{lem:c12-count}
$C_{12}(2,3)$ contains exactly $48$ induced copies of $C_5$, so
$P(C_{12}(2,3)) / (12 \cdot 4^4) = 48/(12 \cdot 256) = 1/64$.
\end{lemma}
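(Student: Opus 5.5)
The plan is to repeat the spectral trace count of Lemmas~\ref{lem:clebsch-count} and~\ref{lem:petersen-count}: count closed $5$-walks via $\trace(A^5)$, then divide by $10$.

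First I will justify that every closed $5$-walk in a triangle-free graph traverses an induced pentagon.
\begin{itemize}
\item A closed walk of odd length contains an odd cycle of length at most its own length.
\item With no triangles, that cycle must have length exactly $5$, so the walk is itself a $5$-cycle on five distinct vertices.
\item A chord of a $5$-cycle would create a triangle, so the cycle is induced.
\end{itemize}
Conversely, each induced pentagon arises from exactly $10$ closed walks ($5$ starting points times $2$ directions). Hence $P(G) = \trace(A^5)/10$ for every triangle-free $G$.

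Triangle-freeness of $C_{12}(2,3)$ was already observed before the statement: no three of the steps $\pm2,\pm3$ sum to $0 \bmod 12$.

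Second, I will confirm the spectrum. The graph is a circulant, so its eigenvalues are $\lambda_j = 2\cos(\pi j/3) + 2\cos(\pi j/2)$ for $j = 0,\dots,11$. Tabulating these values:
\begin{itemize}
\item $j=0$ gives $4$;
\item $j=6$ gives $2-2=0$;
\item $j\in\{2,10\}$ give $-1-2=-3$;
\item $j\in\{4,8\}$ give $-1+2=1$;
\item $j\in\{3,9\}$ give $-2+0=-2$;
\item the four odd values $j\in\{1,5,7,11\}$ give $1+0=1$.
\end{itemize}
This yields the multiset $\{4, 1^6, 0, (-2)^2, (-3)^2\}$, as stated.

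Third, I will compute the trace:
\[
\trace(A^5) = 4^5 + 6\cdot 1 + 0 + 2\cdot(-2)^5 + 2\cdot(-3)^5 = 1024 + 6 - 64 - 486 = 480 .
\]
Therefore $P(C_{12}(2,3)) = 480/10 = 48$.

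Finally, the graph has $12$ vertices and is $4$-regular, so the ratio is $48/(12\cdot 4^4) = 48/3072 = 1/64$.

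There is no real obstacle. The only point needing care is the eigenvalue tabulation. As an independent cross-check, $\trace(A^2) = \sum_j \lambda_j^2 = 16+6+0+8+18 = 48$, which equals $2|E| = 12\cdot 4$.
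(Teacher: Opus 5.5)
Your proposal is correct and follows the paper's proof: triangle-freeness makes the pentagon count equal to $\trace(A^5)/10$, and the spectrum $\{4,1^6,0,(-2)^2,(-3)^2\}$ gives $\trace(A^5)=480$, hence $48$ pentagons and ratio $1/64$. Your explicit eigenvalue table, the closed-walk justification and the $\trace(A^2)$ check are details the paper leaves to the preceding text and to Lemma~\ref{lem:petersen-count}, but the route is the same.
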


\begin{proof}
As in Lemma~\ref{lem:petersen-count}, the pentagon count is
$\trace(A^5)/10$. From the spectrum,
$\trace(A^5) = 4^5 + 6\cdot 1^5 + 0 + 2\cdot(-2)^5 + 2\cdot(-3)^5
= 1024 + 6 - 64 - 486 = 480$, so the count is $480/10 = 48$.
\end{proof}

\begin{proof}[Proof of Theorem~\ref{thm:small-degree}(ii)]
By Lemma~\ref{lem:twentyfour} and $\sum_{v} P(G,v) = 5\,P(G)$, we have
$5\,P(G) \leq 24\,|G|$, i.e.\ $P(G) \leq \tfrac{24}{5}|G|$.
\end{proof}

\begin{remark}
\label{rem:delta4-gap}
Unlike at $\Delta \in \{3, 5\}$, the per-vertex bound is not tight to
the extremal density at $\Delta = 4$. On one hand
Lemma~\ref{lem:twentyfour} is sharp as a per-vertex statement: an
explicit triangle-free graph of maximum degree four on eleven vertices
has a vertex on exactly $24$ pentagons (\nameref{sec:lean-and-empirical}),
so no per-vertex argument improves the constant $24/5$. On the other
hand the densest maximum-degree-four graph we know, $C_{12}(2,3)$,
reaches only ratio $1/64 = 0.015625$, below the per-vertex ceiling
$3/160 = 0.01875$. The maximum of $P(G)/(|G|\,\Delta(G)^4)$ over
triangle-free $G$ with $\Delta(G) = 4$ therefore lies in
$[\,1/64,\ 3/160\,]$ and is undetermined.
\end{remark}

\subsection{The conjecture up to degree five}

\begin{proof}[Proof of the $\Delta \leq 5$ assertion of Theorem~\ref{thm:small-degree}]
Split on $\Delta(G)$. For $\Delta(G) \leq 2$ every component is a path
or cycle, so $P(G)$ counts $C_5$-components: $P(G) = 0$ for
$\Delta(G) \leq 1$, and $P(G) \leq |G|/5 = 0.2\,|G| < 0.3072\,|G| =
\tfrac{12}{625}\cdot 2^4\,|G|$ for $\Delta(G) = 2$. Parts (i) and (ii)
give $P(G) \leq \tfrac65|G| = 1.2\,|G| < 1.5552\,|G| =
\tfrac{12}{625}\cdot 3^4\,|G|$ and $P(G) \leq \tfrac{24}{5}|G| =
4.8\,|G| < 4.9152\,|G| = \tfrac{12}{625}\cdot 4^4\,|G|$. For
$\Delta(G) = 5$ the bound is Theorem~\ref{thm:delta5}, with value
$\tfrac{12}{625}\cdot 5^4\,|G| = 12\,|G|$. Thus the bound holds,
strictly for $1 \leq \Delta(G) \leq 4$.

If $G$ has no isolated vertices then $\Delta(G) \geq 1$, so this
strictness forces any equality to $\Delta(G) = 5$, where
Theorem~\ref{thm:delta5} gives equality if and only if every component
of $G$ is isomorphic to $\mathrm{Cl}$.
\end{proof}

\end{document}